\theoremstyle{plain}
\newtheorem{theorem}{Theorem}[section]
\newtheorem{proposition}[theorem]{Proposition}
\newtheorem{notation}[theorem]{Notation}
\newtheorem{lemma}[theorem]{Lemma}
\theoremstyle{definition}
\newtheorem{remark}[theorem]{Remark}
\newtheorem{definition}[theorem]{Definition}
\newtheorem{corollary}[theorem]{Corollary}
\newtheorem{example}[theorem]{Example}
\newenvironment{rcases}
{\left.\begin{aligned}}
	{\end{aligned}\right\rbrace}
\newcommand{\N}{\mathbb{N}}
\newcommand{\R}{\mathbb{R}}
\newcommand{\eps}{\varepsilon}
\newcommand{\norm}[1]{\left\lVert#1\right\rVert}
\newcommand{\Fix}{\mathrm{Fix}\,}
\newcommand{\proj}{\textup{P}}
\newcommand{\Id}{\mathrm{Id}}
\newcommand{\CAT}{\mathrm{CAT}(0)}
\newcommand{\SE}[1]{\begin{equation*}\begin{split}#1\end{split}\end{equation*}}
\begin{document}

\title{Strong convergence for the alternating Halpern-Mann iteration in $\CAT$ spaces  \thanks{2010 Mathematics Subject Classification: 47J25, 47H09, 47H10, 03F10, 47H05. Keywords: Strong convergence, $\CAT$ spaces, metastability, asymptotic regularity.}}

\author{Bruno Dinis${}^{a}$ and Pedro Pinto${}^{b}$\\[2mm]
	\footnotesize ${}^{a}$ Escola de Ci\^encias e Tecnologia da
	Universidade de Évora,\\ 
	\footnotesize Rua Romão Ramalho, 59, 7000-671~Évora, Portugal\\
	\footnotesize E-mail:  \protect\url{bruno.dinis@uevora.pt}\\[ 2mm]
	\footnotesize ${}^{b}$ Department of Mathematics, Technische Universit{\"a}t Darmstadt,\\ 
	\footnotesize Schlossgartenstrasse 7, 64289 Darmstadt, Germany \\
	\footnotesize E-mail:  \protect\url{pinto@mathematik.tu-darmstadt.de}
}
\maketitle

\begin{abstract}
In this paper we consider, in the general context of $\CAT$ spaces, an iterative schema which alternates between Halpern and Krasnoselskii-Mann style iterations. We prove, under suitable conditions, the strong convergence of this algorithm, benefiting from ideas from the proof mining program. We give quantitative information in the form of effective rates of asymptotic regularity and of metastability (in the sense of Tao).
Motivated by these results we are also able to obtain strongly convergent versions of the forward-backward and the Douglas-Rachford algorithms.
Our results generalize recent work by Bo\c{t}, Csetnek and  Meier, and Cheval and Leu\c{s}tean.
\end{abstract}
\section{Introduction}

In the context of $\CAT$ spaces, we show the strong convergence of an iterative schema which alternates between Halpern and Krasnoselskii-Mann style iterations, while also obtaining quantitative information in the form of rates of asymptotic regularity and rates of metastability.

Let $H$ be a Hilbert space and $C$ a nonempty closed convex subset. A mapping $U:C \to C$ is said to be \emph{nonexpansive} on $C$ if for all  $x, y \in C$ one has  $\norm{U (x) -U (y)}\leq \norm{x -y}$. A well-known method for finding fixed points of a nonexpansive mapping $U:C \to C$ is 
\begin{equation}\label{e:KM}\tag{KM}
x_{n+1} = (1 - \beta_n)U(x_n) + \beta_nx_n,
\end{equation}
with $x_0 \in C$ a starting point and $(\beta_n) \subset [0,1]$. This iteration was introduced by Mann \cite{Mann(53)} and Krasnoselskii \cite{Kraso(55)}. Even though this iteration has the propitious property of being Fejér monotone with respect to the fixed point set, in general it only converges weakly to a fixed point (see e.g.\ \cite[Theorem~2]{R(79)} and \cite[Corollary~5.2 and Remark~5.3]{BHMR(04)}). This motivated several modifications of \eqref{e:KM} in order to ensure strong convergence. One such version was introduced by Yao, Zho and Liou \cite{YZY}, and rediscovered recently by
  Bo\c{t}, Csetnek, and  Meier~\cite{Botetal(19)}, where strong convergence is guaranteed by introducing Tikhonov regularization terms $(\gamma_n)\subset [0,1]$
\begin{equation}\label{e:TKM}\tag{T-KM}
x_{n+1} = (1 - \beta_n)U(\gamma_nx_n) + \beta_n(\gamma_nx_n).
\end{equation}
 In \cite[Theorem~3]{Botetal(19)} Bo\c{t} \emph{et al.} showed that $(x_n)$ generated by \eqref{e:TKM} converges strongly to the fixed point of $U$ with minimum norm $P_{\Fix(U)}(0)$, provided that the following conditions hold
\begin{enumerate}[$(i)$]
\item $(\gamma_n) \subset (0,1]$, \, $\lim \gamma_n =1$,\, $\sum_{n \geq 0} (1-\gamma_n)=\infty$,\, $\sum_{n \geq 0}|\gamma_{n+1}-\gamma_{n}|< \infty$,
\item $(\beta_n)\subset [0,1)$, \, $\sum_{n \geq 0}|\beta_{n+1}-\beta_{n}|< \infty$, and  $\limsup \beta_n <1$.
\end{enumerate}
From this result,  Bo\c{t} \emph{et al.} introduced strongly convergent versions of the forward-backward and the Douglas-Rachford algorithms \cite[Theorems~7 and 10]{Botetal(19)}, which are well-known splitting methods that weakly approximate zeros of
a sum of monotone operators.

A different iteration was introduced by Halpern in \cite{Halpern67}
\begin{equation}\label{e:Halpern}\tag{H}
x_{n+1}=(1 - \alpha_n)T(x_n) + \alpha_nu,
\end{equation}
where $T:C \to C$ is a nonexpansive map, $x_0,u \in C$ and $(\alpha_n) \subset [0,1]$. Halpern showed the strong convergence of \eqref{e:Halpern} to the
metric projection onto the set $\Fix(T)$ of the anchor point $u$, in Hilbert spaces. The conditions considered by Halpern prevented the natural choice $\alpha_n =\frac{1}{n+1}$, which was later overcome by Wittmann \cite{Wittmann(92)}. The strong convergence of \eqref{e:Halpern} was first extended to the setting of uniformly smooth Banach spaces by Reich in \cite{R(80)}. In \cite{X(02),Xu(04)}, Xu established strong convergence, also in uniformly smooth Banach spaces, under conditions which are incomparable to those considered by Wittmann but which still allowed $\alpha_n =\frac{1}{n+1}$.

As observed by Cheval and Leu\c{s}tean in \cite{CK(ta)},  the iteration \eqref{e:TKM} can be seen as alternating between a Mann style construction and a special case of the Halpern schema. Indeed, the iteration considered by Cheval and Leu\c{s}tean can be written in the following way
\begin{equation}\label{CL}\tag{CL}
\begin{cases}
x_{2n+1}&=(1-\alpha_n) x_{2n}+\alpha_nu\\
x_{2n+2}&=(1-\beta_n)U(x_{2n+1})+\beta_nx_{2n+1},
\end{cases}
\end{equation}
which corresponds to \eqref{e:TKM} when $u=0$. They considered this iteration in the general setting of hyperbolic spaces and obtained rates of asymptotic regularity. Yet no strong convergence results were established. Moreover, it was shown recently in \cite{CKL(ta)} that this iteration is essentially a modified Halpern iteration, as introduced in \cite{KX}, and studied in the setting of $\CAT$ spaces in \cite{CP}. Rates of asymptotic regularity and of metastability for the modified Halpern iteration were obtained in \cite{SK}, and shown to be transferable to the iteration \eqref{CL} (and \emph{vice-versa}) in \cite{CKL(ta)}.

Let $X$ be a $\CAT$ space and $C$ a nonempty convex closed subset. In this paper we will consider an iteration $(x_n)$ generated by the recursive schema 
\begin{equation}\label{e:MannHalpern}\tag{HM}
	\begin{cases}
		x_{2n+1}&=(1-\alpha_n)T(x_{2n})\oplus \alpha_n u\\
		x_{2n+2}&=(1-\beta_n)U(x_{2n+1})\oplus\beta_nx_{2n+1},
	\end{cases}
\end{equation}
where $x_0, u\in C$, $(\alpha_n), (\beta_n)\subset [0,1]$ are sequences of real numbers  and $T,U:C \to C$ are nonexpansive mappings. The symbol ``$\oplus$'' denotes the application of the convex operator $W$ (see Section~\ref{s:preliminaries} for details), which is the usual linear convex combination in the case of normed spaces. We show the following result:
\begin{theorem}\label{t:main}
	Let $X$ be a complete $\CAT$ space and $C \subseteq X$ a nonempty convex closed subset. Consider nonexpansive mappings $T,U:C \to C$ such that  $F:=\Fix(T) \cap \Fix(U) \neq \emptyset$ and $u,x_0 \in C$.  Assume that  $(\alpha_n) \subset [0,1]$, $(\beta_n) \subset (0,1)$ are sequences of real numbers satisfying
		\begin{align*}
			&(i)\ \lim \alpha_n =0; \qquad (ii)\ \sum_{n \geq 0} \alpha_n= \infty; \qquad (iii)\ \sum_{n \geq 0} |\alpha_{n+1} - \alpha_{n}|< \infty;\\
			 &(iv)\ \sum_{n \geq 0} |\beta_{n+1} - \beta_{n}|< \infty; \qquad (v)\ 0< \liminf \beta_n \leq \limsup \beta_n <1.
			 \end{align*}
	Then $(x_n)$ generated by \eqref{e:MannHalpern} converges strongly to $\proj_F(u)$. 
\end{theorem}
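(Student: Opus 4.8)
The plan is to follow the classical three-step strategy for strong convergence of Halpern-type iterations, adapted to the $\CAT$ setting and to the alternating structure of \eqref{e:MannHalpern}, and to organize the argument so that each step is quantitative in spirit (even if here we only sketch the qualitative version). \emph{Step 1: boundedness.} First I would show that $(x_n)$ is bounded. Fix $p \in F$. Using the convexity inequality for $W$ in $\CAT$ spaces, nonexpansiveness of $T$ and $U$, and the fact that $p$ is a common fixed point, one gets $d(x_{2n+1},p) \le (1-\alpha_n)d(x_{2n},p) + \alpha_n d(u,p)$ and $d(x_{2n+2},p) \le d(x_{2n+1},p)$ (the Mann step is nonexpansive towards $p$). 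Chaining these yields $d(x_{n},p) \le \max\{d(x_0,p), d(u,p)\}$ by an easy induction, so $(x_n)$ is bounded; moreover this gives an explicit bound depending only on $d(x_0,p)$ and $d(u,p)$.

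\emph{Step 2: asymptotic regularity.} Next I would establish $d(x_{n+1},x_n) \to 0$, and in particular $d(U(x_{2n+1}),x_{2n+1}) \to 0$ and $d(T(x_{2n}),x_{2n}) \to 0$ along the relevant subsequences. The standard route is to bound $d(x_{n+2},x_{n+1})$ in terms of $d(x_{n+1},x_n)$ plus error terms controlled by $|\alpha_{n+1}-\alpha_n|$, $|\beta_{n+1}-\beta_n|$ and $\alpha_n$, then invoke a lemma of the Xu/Lions type: a nonnegative sequence $s_{n+1}\le(1-\tilde\alpha_n)s_n + \tilde\alpha_n\sigma_n + \delta_n$ with $\sum\tilde\alpha_n=\infty$, $\sigma_n\to 0$, $\sum\delta_n<\infty$ converges to $0$. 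Conditions $(i)$–$(v)$ are exactly what is needed: $(iii)$ and $(iv)$ make the perturbation terms summable, $(ii)$ supplies divergence of the series of coefficients, and $(v)$ keeps $\beta_n$ bounded away from $0$ and $1$ so the Mann step genuinely contracts the displacement. From $d(x_{n+1},x_n)\to 0$ together with $(i)$ one extracts $d(T(x_{2n}),x_{2n})\to 0$ directly from the Halpern recursion, and then $d(U(x_{2n}),x_{2n})\to 0$ follows by combining with the Mann step and $0<\liminf\beta_n$. (This is essentially where the work of Cheval--Leu\c{s}tean and the cited quantitative references enter, and where I expect the bulk of the technical computation to lie.)

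\emph{Step 3: the ``projection'' inequality and conclusion.} Let $z := \proj_F(u)$, which exists because $F$ is a nonempty closed convex subset of a complete $\CAT$ space. The decisive step — and the one I expect to be the main obstacle — is to prove
\SE{
\limsup_{n} \big\langle \overrightarrow{u z},\, \overrightarrow{z\, x_n} \big\rangle \le 0,
}
where $\langle\cdot,\cdot\rangle$ denotes the quasi-linearization inner product for $\CAT$ spaces. In Hilbert space this is the familiar $\limsup\langle u-z, x_n-z\rangle\le 0$, obtained by passing to a weakly convergent subsequence whose weak limit lies in $F$ (via asymptotic regularity and demiclosedness of $I-T$, $I-U$) and using the variational characterization of the projection. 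In the $\CAT$ setting one replaces weak convergence by convergence of asymptotic centers / $\Delta$-convergence: take a subsequence of $(x_n)$ along which the $\limsup$ is attained and which $\Delta$-converges to some $v$; asymptotic regularity plus the $\CAT$ analogue of demiclosedness forces $v \in F$; then the characterization of $\proj_F(u)$ (namely $\langle \overrightarrow{uz}, \overrightarrow{zw}\rangle \le 0$ for all $w\in F$) gives the inequality. Finally, combining Step 2, Step 3 and the estimate
\SE{
d(x_{2n+1},z)^2 \le (1-\alpha_n)d(x_{2n},z)^2 + 2\alpha_n \big\langle \overrightarrow{u z},\, \overrightarrow{z\, x_{2n+1}} \big\rangle,
}
derived from the $\CAT$ convexity inequality applied to the Halpern step, together with $d(x_{2n+2},z)\le d(x_{2n+1},z)$ from the Mann step, I would again apply the Xu-type lemma (now with $\sigma_n := 2\langle\overrightarrow{uz},\overrightarrow{z\,x_{2n+1}}\rangle$, whose $\limsup$ is $\le 0$) to conclude $d(x_n,z)\to 0$, i.e. $x_n \to \proj_F(u)$ strongly. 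The care needed to make Step 3 rigorous — handling $\Delta$-convergence, the behaviour of quasi-linearization under $\oplus$, and the demiclosedness principle for two maps simultaneously — is the crux of the whole proof.
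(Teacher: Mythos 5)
Your Steps 1 and 2 agree with the paper's Lemma~\ref{l:bounded}, Lemma~\ref{l:artheta} and Proposition~\ref{p:asymptoticregularity}: boundedness follows by induction through the Halpern and Mann halves, and asymptotic regularity follows from the Xu-type lemma together with conditions $(ii)$--$(v)$. The gap is in your Step 3. You propose to pick a subsequence $(x_{n_j})$ attaining $\limsup_n\langle\overrightarrow{uz},\overrightarrow{zx_n}\rangle$ (with $z:=\proj_F(u)$) and $\Delta$-converging to some $v$, use demiclosedness to get $v\in F$, and then pass $\langle\overrightarrow{uz},\overrightarrow{zv}\rangle\le 0$ back to the $\limsup$. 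That last transfer requires that $\Delta$-convergence $x_{n_j}\overset{\Delta}{\to}v$ forces $\lim_j\langle\overrightarrow{uz},\overrightarrow{zx_{n_j}}\rangle\le\langle\overrightarrow{uz},\overrightarrow{zv}\rangle$, i.e.\ an upper semicontinuity of the quasi-linearization function along $\Delta$-convergent sequences. In Hilbert spaces this is exactly weak convergence of $\langle u-z,\cdot\,\rangle$, but in general $\CAT$ spaces $\Delta$-convergence and the ``weak'' convergence defined through the quasi-linearization function are \emph{not} equivalent (Kakavandi constructed an explicit counterexample), and no such semicontinuity is known. The paper itself flags this as the obstruction: ``It is not clear how to carry out such an argument in $\CAT$ spaces.'' So your Step 3 as stated does not go through.

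What the paper does instead is precisely engineered to sidestep this. Rather than establishing $\limsup_n\langle\overrightarrow{uz},\overrightarrow{zx_n}\rangle\le 0$ directly for the actual projection point, it proves a \emph{finitary} version of the projection property: by an $\eps$-weakening of the metric projection (Proposition~\ref{P:metric}, Proposition~\ref{p:innerproduct}) combined with the quantitative removal of sequential weak compactness from \cite{FFLLPP(19)} (Proposition~\ref{p:removalswc}), one obtains, for every $\eps>0$ and every counterfunction, a point $\tilde{x}$ that is an approximate common fixed point and satisfies $\langle\overrightarrow{u\tilde{x}},\overrightarrow{x_m\tilde{x}}\rangle\le\eps$ for all $m\ge n$. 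Feeding this into the Xu-type combinatorial lemma with error term (Lemma~\ref{L:xu_seq_reals_qt3}) yields a rate of metastability for $(x_n)$ (Theorem~\ref{t:main_meta}), hence $(x_n)$ is Cauchy (Lemma~\ref{l:metaCauchy}) and, by completeness, converges strongly to some $z\in F$. Only \emph{then} is the projection characterization $\langle\overrightarrow{uP_F(u)},\overrightarrow{zP_F(u)}\rangle\le 0$ (Lemma~\ref{l:charactProj}) invoked, together with a final application of Lemma~\ref{L:Xu}, to identify $z=\proj_F(u)$; at that stage the $\limsup$ condition is trivial because $x_{2n}\to z$ strongly, so no $\Delta$-convergence is needed at all. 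If you want to salvage your route, you would either have to prove the missing semicontinuity of the quasi-linearization function under $\Delta$-convergence (not available in general), or replace Step 3 with the metastability / $\eps$-projection machinery the paper actually uses.
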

Theorem~\ref{t:main} is a twofold generalization of \cite[Theorem~3]{Botetal(19)}. On the one hand, our iteration has \eqref{e:TKM} as a particular case. On the other hand, our strong convergence result is established in $\CAT$ spaces -- frequently considered the non-linear generalization of Hilbert spaces. The \eqref{e:MannHalpern} iteration also extends the one in \cite{CK(ta)} -- which is the particular case when $T= \Id_C$. Even though the context of \cite{CK(ta)} is more general than that of  $\CAT$ spaces, Theorem~\ref{t:main} proves strong convergence while  \cite[Theorems~4.1 and 4.2]{CK(ta)} only establish asymptotic regularity for the iteration. This greater generality is achieved at the cost of additionally assuming the mild condition $0< \liminf \beta_n$. Similarly to the strongly convergent versions of the forward-backward and the Douglas-Rachford algorithms introduced in \cite{Botetal(19)}, our generalized iteration is also used to define extended versions of these algorithms. 

As an initial step into proving our main theorem we first establish quantitative results in the spirit of the proof mining program \cite{K(08),K(18)}. In this framework one looks to obtain quantitative information, such as rates of convergence and rates of metastability, guided by proof-theoretical techniques.  For example, while in general it is not possible to obtain computable information on the value of $n$ in the Cauchy property (see e.g.\ \cite{N(15)}), 
 \begin{equation*}
\forall \eps >0\,\exists n \in \N\, \forall i,j \geq n  \left(d(x_i,x_j)\leq \eps \right),
\end{equation*}
in many cases, logical results guarantee quantitative information for the equivalent finitary version
 \begin{equation}\label{metastab}\tag{$\dagger$}
\forall \eps >0 \,\forall f : \N \to \N \,\exists n \, \forall i,j \in [n,f(n)] \left(d(x_i,x_j)\leq \eps \right),
\end{equation}
where $[n,f(n)]$ denotes the set $\{n,n+1, \cdots, f(n)\}$.
The property \eqref{metastab} was popularized by Tao \cite{T(08b),T(08a)} under the name \emph{metastability}. Quantitative information for \eqref{metastab} takes the form of a \emph{rate of metastability}, i.e.\ a computable functional $\mu: (0, +\infty) \times \N^\N \to \N$ satisfying
 \begin{equation*}\label{metastabb}
\forall \eps >0\,\forall f : \N \to \N \,\exists n \leq \mu(\eps,f) \, \forall i,j \in [n,f(n)] \left(d(x_i,x_j)\leq \eps \right).
\end{equation*}
Note that a rate of metastability for \eqref{metastab} does not entail computable information for the equivalent Cauchy property. Even though these two notions are equivalent, this equivalence is non-effective as the proof is by contradiction. 
Still, metastability results have a far reaching scope as can be seen in several recent results, see for example \cite{K(05)i,AGT(10),GT(08),KL(09),T(08b)}. Another striking feature of proof mining is the possibility to identify precisely the required conditions needed to prove a result. This frequently allows to single out unused hypothesis and thus to obtain generalized results. 
In this sense, Theorem~\ref{t:main} follows from a generalization to a non-linear setting of a corresponding quantitative result in Hilbert spaces. In Hilbert spaces the result can be established via a sequential weak compactness argument (see Section~\ref{s:final}). It is not clear how to carry out such an argument in $\CAT$ spaces. Nevertheless, the technique developed in \cite{FFLLPP(19)} shows that, through a quantitative treatment, it is possible to bypass this argument in Hilbert spaces (as was done e.g.\ in \cite{DP(ta),DP(21)}). Extending such quantitative results to the setting of $\CAT$ spaces allows to conclude the main theorem. These quantitative results may also be seen as a natural continuation of previous quantitative analyses  (see e.g.\ \cite{DP(ta),KL(12),DP(MAR)}).
Even though our results and proofs are inspired by proof theoretical techniques,  these are only used as an intermediate step and are not visible in the final product. As such we do not presuppose any particular knowledge of logical tools.

The structure of the paper is the following. Some relevant terminology and useful lemmas are recalled in Section~\ref{s:preliminaries}. The central results  are obtained in Sections~\ref{s:asymptoticregularity} and \ref{s:convergence} where we show that a sequence $(x_n)$ generated by \eqref{e:MannHalpern} is asymptotically regular (Section~\ref{s:asymptoticregularity}) and has the metastability property (Section~\ref{s:convergence}), while also obtaining the corresponding quantitative information. In Section~\ref{s:Projection} we show a quantitative version of the metric projection in $\CAT$ spaces which is used to establish the metastability of the sequence. We show how Theorem~\ref{t:main} follows from the metastability property in Section~\ref{s:Strongconvergence}. Afterwards,  in Section~\ref{s:FBDR}, we study generalized versions of the forward-backward and the Douglas-Rachford algorithms (now in the context of Hilbert spaces). Some final remarks are left for Section~\ref{s:final}.

\section{Preliminaries}\label{s:preliminaries}

\subsection{Hyperbolic and $\CAT$ spaces}
Consider a triple $(X,d,W)$ where $(X,d)$ is a metric space and $W:X \times X \times [0,1] \to X$ is a function satisfying, for all $x,y,z,w\in X$ and $\lambda, \lambda' \in [0,1]$,
\begin{enumerate}
	\item[(W1)] $d(z, W(x, y,\lambda))\leq (1-\lambda)d(z, x) + \lambda d(z, y)$
	\item[(W2)] $d(W(x, y, \lambda), W(x, y, \lambda'))=|\lambda-\lambda'|d(x, y)$
	\item[(W3)] $W(x,y,\lambda)=W(y,x, 1-\lambda)$
	\item[(W4)] $d(W(x,y,\lambda), W(z,w,\lambda))\leq (1-\lambda)d(x,z)+\lambda d(y,w)$.
\end{enumerate}
A triple in the conditions above is called a \emph{hyperbolic space}. This formulation was introduced by Kohlenbach in \cite{K(05)} under the name of $W$-hyperbolic spaces. The convexity function $W$ was first considered by Takahashi in \cite{T(70)}, where a triple $(X,d,W)$ satisfying (W1) was called a convex metric space. The notion of hyperbolic space considered here is more general than the hyperbolic spaces in the sense of Reich and Shafrir \cite{RS(90)}, and slightly more restrictive than the notion of space of hyperbolic type by Goebel and Kirk \cite{GK(83)}.

If $x,y\in X$ and $\lambda \in[0,1]$, we use the notation $(1-\lambda)x \oplus \lambda y$ to denote $W(x,y,\lambda)$. It is easy to see using (W1) that
\begin{equation}\label{e:equality}
d(x, (1-\lambda)x \oplus \lambda y)=\lambda d(x,y) \,\mbox{ and }\, d(y, (1-\lambda)x \oplus \lambda y)=(1-\lambda)d(x,y).
\end{equation}
The metric segment with endpoinds $x,y \in X$ is the set $\{(1-\lambda)x\oplus \lambda y:\lambda \in [0,1]\}$ and is denoted by $[x,y]$. A nonempty subset $C \subseteq X$ is said to be \emph{convex} if $\forall x,y \in C ([x,y] \subseteq C)$.

The class of hyperbolic spaces includes the normed spaces and their convex subsets (with $(1-\lambda)x\oplus \lambda y=(1-\lambda)x + \lambda y$, the usual convex linear combination), the Hilbert ball \cite{GR(84)} and the $\CAT$ spaces. 
The important class of $\CAT$ spaces (introduced by Alexandrov in \cite{A(51)}, and named as such by Gromov in \cite{G(87)}; see \cite{BH(13)} for a detailed treatment) is characterized as the hyperbolic spaces that satisfy the property $\mathrm{CN}^-$ (which in the presence of the other axioms is equivalent to the Bruhat-Tits $\mathrm{CN}$-inequality \cite{BT(72)}, but contrary to the latter is purely universal\footnote{I.e.\ it can can be formally restated as $\forall x_1 \dots \forall x_n A_0(x_1,\dots,x_n)$, where $A_0$ is quantifier-free/decidable. }):
\begin{equation}\tag{CN$^-$}
	\forall x,y,z \in X\, \left( d^2\left(z, \frac{1}{2}x \oplus \frac{1}{2}y\right) \leq \frac{1}{2}d^2(z, x)+\frac{1}{2}d^2(z, y)-\frac{1}{4}d^2(x, y) \right)
\end{equation}
This inequality actually extends beyond midpoints -- see e.g. \cite[Lemma 2.5]{DP(08)}: for all $x,y,z\in X$ and $\lambda \in[0,1]$
\begin{equation}\tag{CN$^+$}\label{CN}
	d^2\left(z, (1-\lambda)x \oplus \lambda y\right) \leq (1-\lambda)d^2(z, x)+\lambda d^2(z, y)-\lambda(1-\lambda)d^2(x, y).
\end{equation}
In the same way as hyperbolic spaces are considered the non-linear counterpart of normed spaces, $\CAT$ spaces are the non-linear generalizations of Hilbert spaces.

As shown by Leu\c{s}tean \cite{L(07)}, $\CAT$ spaces are uniformly convex with a quadratic modulus of uniform convexity:
\begin{lemma}[{\cite[Proposition 8]{L(07)}}]\label{l:CATconvexity}
Every $\CAT$ space is a uniformly convex space and $\eta(\eps)=\frac{\eps^2}{8}$ is a modulus of uniform convexity, i.e. for all $\eps\in(0,2]$, $r >0$, and $x,y,a\in X$
\begin{equation*}
	\begin{rcases}
		d(x, a)&\leq r\\
		d(y,a)&\leq r\\
		d(x,y) &\geq \eps r\\
	\end{rcases} \rightarrow d\left(\frac{1}{2}x \oplus \frac{1}{2}y, a\right)\leq (1-\eta(\eps))r.
\end{equation*}
\end{lemma}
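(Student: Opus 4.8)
The plan is to derive the uniform convexity estimate directly from the inequality \eqref{CN} applied with $\lambda = \tfrac12$. Fix $\eps \in (0,2]$, $r > 0$, and points $x,y,a \in X$ with $d(x,a) \le r$, $d(y,a) \le r$ and $d(x,y) \ge \eps r$. Instantiating (CN$^-$) (equivalently \eqref{CN} with $\lambda=\tfrac12$) at the triple $(a, x, y)$ gives
\begin{equation*}
d^2\!\left(a, \tfrac12 x \oplus \tfrac12 y\right) \le \tfrac12 d^2(a,x) + \tfrac12 d^2(a,y) - \tfrac14 d^2(x,y).
\end{equation*}
Now I would bound the right-hand side from above: the first two terms are each at most $\tfrac12 r^2$, and $-\tfrac14 d^2(x,y) \le -\tfrac14 \eps^2 r^2$, so
\begin{equation*}
d^2\!\left(a, \tfrac12 x \oplus \tfrac12 y\right) \le r^2 - \tfrac{\eps^2}{4} r^2 = \left(1 - \tfrac{\eps^2}{4}\right) r^2.
\end{equation*}

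It remains to convert the square-of-the-bound into the bound-of-the-form $(1-\eta(\eps))r$. Taking square roots,
\begin{equation*}
d\!\left(\tfrac12 x \oplus \tfrac12 y, a\right) \le r\sqrt{1 - \tfrac{\eps^2}{4}},
\end{equation*}
and the only remaining point is the elementary numerical inequality $\sqrt{1 - t} \le 1 - \tfrac{t}{2}$ valid for $t \in [0,1]$ (here with $t = \eps^2/4 \in (0,1]$, using $\eps \le 2$), which can be checked by squaring: $(1 - t/2)^2 = 1 - t + t^2/4 \ge 1 - t$. This yields
\begin{equation*}
d\!\left(\tfrac12 x \oplus \tfrac12 y, a\right) \le r\left(1 - \tfrac{\eps^2}{8}\right) = (1 - \eta(\eps)) r,
\end{equation*}
which is exactly the claim.

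I do not anticipate a genuine obstacle here: the content is entirely contained in (CN$^-$), and everything else is bookkeeping. The only place warranting a little care is that one must use $\eps \le 2$ to guarantee $1 - \eps^2/4 \ge 0$ so that the square root is real and the estimate $\sqrt{1-t}\le 1-t/2$ applies on the correct range; and one should note the argument produces the slightly weaker (but cleaner) modulus $\eps^2/8$ rather than the sharp constant coming from $1 - \sqrt{1-\eps^2/4}$, which is exactly what the statement asserts. One could alternatively invoke \eqref{CN} directly at general $\lambda$, but since the definition of modulus of uniform convexity only refers to the midpoint $\tfrac12 x \oplus \tfrac12 y$, the case $\lambda = \tfrac12$ suffices.
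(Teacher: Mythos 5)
Your proof is correct. The paper states this as a cited result (Leuştean, \cite[Proposition 8]{L(07)}) and gives no proof of its own, but your argument is precisely the standard one from the cited source: instantiate $(\mathrm{CN}^{-})$ at the midpoint, plug in the three hypotheses to get $d^2(a,\tfrac12 x\oplus\tfrac12 y)\le(1-\eps^2/4)r^2$, and finish with the elementary estimate $\sqrt{1-t}\le 1-t/2$ on $[0,1]$ (taking $t=\eps^2/4$, where $\eps\le 2$ keeps $t\le 1$). Nothing to add.
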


In \cite[Proposition~14]{BN(08)} it was shown that in every metric space there exists a unique function $\langle \cdot, \cdot \rangle: X^2 \times X^2 \to \R$
satisfying
\begin{enumerate}[$(i)$]
\item $\langle \overrightarrow{xy},\overrightarrow{xy}\rangle=d^2(x,y)$
\item $\langle \overrightarrow{xy},\overrightarrow{uv}\rangle=\langle \overrightarrow{uv},\overrightarrow{xy}\rangle$
\item $\langle \overrightarrow{yx},\overrightarrow{uv}\rangle=-\langle \overrightarrow{xy},\overrightarrow{uv}\rangle$
\item $\langle \overrightarrow{xy},\overrightarrow{uv}\rangle+ \langle \overrightarrow{xy},\overrightarrow{vw}\rangle= \langle \overrightarrow{xy},\overrightarrow{uw}\rangle$,
\end{enumerate}
where $\overrightarrow{xy}$ denotes the pair $(x,y)$.
This unique function, called the \emph{quasi-linearization function}, is defined for any $(x,y),(u,v)\in X^2$ by
\begin{equation*}
\langle \overrightarrow{xy},\overrightarrow{uv}\rangle:= \frac{1}{2}\left(d^2(x,v)+d^2(y,u)-d^2(x,u)-d^2(y,v) \right). 
\end{equation*}
In $\CAT$ spaces the quasi-linearization function satisfies the Cauchy-Schwarz inequality,
\begin{equation}\label{e:CS}
\langle \overrightarrow{xy},\overrightarrow{uv}\rangle \leq d(x,y)d(u,v).
\end{equation}

We also have the following useful inequality.
\begin{lemma}\label{l:binomial}
Let $X$ be a $\CAT$ space. For every $x,y,z \in X$ and $t \in [0,1]$
\begin{equation*}
d^2((1-t)x\oplus t y,z)\leq (1-t)^2d^2(x,z)+2t(1-t)\langle \overrightarrow{xz},\overrightarrow{yz}\rangle+t^2 d^2(y,z).
\end{equation*}
\end{lemma}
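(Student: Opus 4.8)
The plan is to prove the inequality in Lemma~\ref{l:binomial} by combining the extended $\mathrm{CN}^+$ inequality \eqref{CN} with the elementary algebraic identities satisfied by the quasi-linearization function. First, I would apply \eqref{CN} with the point $(1-t)x\oplus ty$, obtaining
\[
d^2((1-t)x\oplus ty,z)\leq (1-t)d^2(x,z)+t\,d^2(y,z)-t(1-t)d^2(x,y).
\]
The task then reduces to showing that the right-hand side of this estimate is bounded above by the right-hand side claimed in the lemma, i.e.
\[
(1-t)d^2(x,z)+t\,d^2(y,z)-t(1-t)d^2(x,y)\leq (1-t)^2d^2(x,z)+2t(1-t)\langle \overrightarrow{xz},\overrightarrow{yz}\rangle+t^2 d^2(y,z).
\]

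Rearranging, this is equivalent to
\[
(1-t)d^2(x,z)-(1-t)^2d^2(x,z)+t\,d^2(y,z)-t^2d^2(y,z)-t(1-t)d^2(x,y)\leq 2t(1-t)\langle \overrightarrow{xz},\overrightarrow{yz}\rangle,
\]
and since $(1-t)-(1-t)^2=t(1-t)$ and $t-t^2=t(1-t)$, the left-hand side factors as $t(1-t)\bigl(d^2(x,z)+d^2(y,z)-d^2(x,y)\bigr)$. So (assuming $t\in(0,1)$; the cases $t=0,1$ are trivial and can be dispatched directly) the claim amounts to
\[
d^2(x,z)+d^2(y,z)-d^2(x,y)\leq 2\langle \overrightarrow{xz},\overrightarrow{yz}\rangle.
\]
But by the defining formula for the quasi-linearization function, $\langle \overrightarrow{xz},\overrightarrow{yz}\rangle=\tfrac12\bigl(d^2(x,z)+d^2(z,y)-d^2(x,y)-d^2(z,z)\bigr)=\tfrac12\bigl(d^2(x,z)+d^2(y,z)-d^2(x,y)\bigr)$, so in fact equality holds, and the inequality is immediate.

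The proof is therefore short: one invocation of \eqref{CN} followed by the explicit computation of $\langle \overrightarrow{xz},\overrightarrow{yz}\rangle$. There is no real obstacle — the only point requiring a modicum of care is the bookkeeping of the coefficients in $t$, and checking that the cross term produced matches exactly $2t(1-t)\langle \overrightarrow{xz},\overrightarrow{yz}\rangle$ rather than merely bounding it. One could alternatively present the argument more symmetrically: write $(1-t)=(1-t)^2+t(1-t)$ and $t=t^2+t(1-t)$ so that the bound from \eqref{CN} becomes $(1-t)^2d^2(x,z)+t^2d^2(y,z)+t(1-t)\bigl(d^2(x,z)+d^2(y,z)-d^2(x,y)\bigr)$, and then recognize the last parenthesized expression as $2\langle \overrightarrow{xz},\overrightarrow{yz}\rangle$. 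This makes the passage to the stated form completely transparent and is the version I would write up.
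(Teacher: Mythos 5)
Your proof is correct and takes essentially the same route as the paper: one application of \eqref{CN} followed by the algebraic identity $\langle \overrightarrow{xz},\overrightarrow{yz}\rangle=\tfrac12\bigl(d^2(x,z)+d^2(y,z)-d^2(x,y)\bigr)$. In fact, the ``more symmetric alternative'' you sketch at the end, splitting $(1-t)=(1-t)^2+t(1-t)$ and $t=t^2+t(1-t)$, is exactly the computation the paper writes down.
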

\begin{proof}
Using \eqref{CN} we derive
\SE{d^2((1-t)x\oplus t y,z)&\leq (1-t)d^2(x,z)+td^2(y,z)-t(1-t)d^2(x,y)\\
&= (1-t)^2d^2(x,z)+t(1-t)\left(d^2(x,z)+d^2(y,z)-d^2(x,y) \right)+t^2d^2(y,z)\\
&=(1-t)^2d^2(x,z)+2t(1-t)\langle \overrightarrow{xz},\overrightarrow{yz}\rangle+t^2 d^2(y,z).\qedhere
}
\end{proof}

\subsection{Quantitative notions}

\begin{definition}\label{d:RN}
Let $(a_n)$ be a sequence of real numbers. 
\begin{enumerate}[$(i)$]
\item A \emph{rate of convergence} for 
$\lim a_n = 0$ is a function $\gamma:(0,+\infty)\to\N$ such that
\[\forall \eps >0\, \forall n\geq \gamma(\eps)\,\left(|a_n|\leq \eps\right).\]
%
%
%
\item A \emph{rate of divergence} for $\lim a_n =+\infty$ is a function $\gamma:\N\to\N$ such that
\[\forall k\in\N\, \forall n\geq \gamma(k)  \left(a_n \geq k\right).\]
\item A \emph{Cauchy rate} for $(a_n)$ is a function $\gamma:(0,+\infty)\to\N$ such that
\[\forall \eps >0\, \forall i,j \geq \gamma(\eps) \left(|a_{i}-a_{j}| \leq \eps \right).\]
When $a_n= \sum_{i=0}^{n} b_i$, for some sequence $(b_n) \subset [0, \infty)$, $\gamma$ is a \emph{Cauchy rate} for $(a_n)$ if $$\forall \eps >0 \, \forall n \in \N \left( \sum_{i= \gamma(\eps)+1}^{n}b_i \leq \eps \right). $$%
\end{enumerate}
\end{definition}
\begin{definition}\label{d:HS}
Let $(x_n)$ be a sequence in a metric space $(X,d)$ and $x \in X$.
\begin{enumerate}[$(i)$]
\item A \emph{rate of convergence} for $\lim x_n = x$ is a rate of convergence for $\lim d(x_n,x)= 0$. 
%
%
\item A \emph{Cauchy rate} for $(x_n)$ is a function $\gamma:(0,+\infty)\to\N$ such that
\[\forall \eps>0 \, \forall i,j \geq \gamma(\eps)\left(d(x_i,x_j)\leq \eps\right).\]
\item A \emph{rate of metastability} for $(x_n)$ is a functional $\Gamma:(0,+\infty) \times \N^{\N}\to\N$ such that
 \begin{equation*}
\forall \eps>0 \,\forall f : \N \to \N \,\exists n \leq \Gamma(\eps,f) \, \forall i,j \in [n,f(n)] \left(d(x_i,x_j)\leq \eps \right).
\end{equation*}
\end{enumerate}
\end{definition}
We say that a function $f:\N\to\N$ is \emph{monotone} if
\[
k\leq k' \to f(k)\leq f(k').
\]
Without loss of generality, we can always assume to have this property, since if needed we can replace the function $f$ with the function $f^{\max}:\N\to\N$ defined by $f^{\max}(k):=\max\{f(k')\, : \, k'\leq k\}$ for all $k\in\N$. We say that a function $g: (0, +\infty)\to \N$ is \emph{monotone} if
\[
0\leq \eps \leq \eps' \to g(\eps) \geq g(\eps').
\] 
This notion follows from a proof mining treatment where one works with a function $g':\N\to \N$ satisfying
\[
g(\eps)=g'(k)\, \text{ with } k=\lceil \eps^{-1}\rceil.
\]
\begin{lemma}\label{l:metaCauchy}
Let $(X,d)$ be a metric space and $(x_n)$ be a sequence in $X$. Then $(x_n)$ has the metastability property, i.e.\
\begin{equation*}
\forall \eps>0 \,\forall f : \N \to \N \,\exists n \in \N \, \forall i,j \in [n,f(n)] \left(d(x_i,x_j)\leq \eps \right)
\end{equation*}
if and only if $(x_n)$ is a Cauchy sequence.
\end{lemma}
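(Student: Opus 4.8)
The claim in Lemma~\ref{l:metaCauchy} is the well-known equivalence between the metastability property and the Cauchy property, so the proof should be short and essentially logical. The plan is to prove both directions directly from the definitions, keeping in mind that the interesting direction (metastability $\Rightarrow$ Cauchy) is non-effective and proceeds by contradiction.

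For the easy direction, assume $(x_n)$ is Cauchy. Given $\eps>0$ and an arbitrary $f:\N\to\N$, let $N$ be such that $d(x_i,x_j)\leq\eps$ for all $i,j\geq N$; then $n:=N$ witnesses the metastability statement, since in particular $d(x_i,x_j)\leq\eps$ for all $i,j\in[N,f(N)]$. Note this does not even use $f$.

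For the converse, I would argue by contraposition: suppose $(x_n)$ is not Cauchy. Then there is some $\eps>0$ such that for every $n\in\N$ there exist $i,j\geq n$ with $d(x_i,x_j)>\eps$. For each $n$, pick such a pair and let $f(n)$ be the larger of the two indices (or any upper bound for them); this defines a function $f:\N\to\N$. Now for this $\eps$ and this $f$, and any $n\in\N$, the chosen $i,j$ lie in $[n,f(n)]$ and satisfy $d(x_i,x_j)>\eps$, so the metastability statement fails. Hence metastability implies Cauchy.

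The only subtlety — and the reason the equivalence is non-effective, as already flagged in the surrounding text — is that building $f$ in the second direction requires choosing, for each $n$, a bad pair of indices; this is an (innocuous, countable) use of choice and, more importantly, the whole argument is by contradiction, so a rate of metastability extracted from it gives no computable Cauchy rate. I do not expect any real obstacle here: both directions are immediate once the quantifiers are unwound. I would present the proof in two short paragraphs, one per implication, explicitly constructing the witness $n=N$ in the forward direction and the counterexample function $f$ in the backward direction.
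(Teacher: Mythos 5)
Your proposal is correct and follows essentially the same argument as the paper: the forward direction is immediate (the Cauchy threshold serves as the witness $n$ regardless of $f$), and the converse is proved by contraposition, defining $f(n)$ as the larger of a bad pair of indices guaranteed by the failure of the Cauchy property, exactly as the paper does. There is nothing to add.
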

\begin{proof}
If $(x_n)$ is a Cauchy sequence then it clearly has the metastability property. Assume that $(x_n)$ is not a Cauchy sequence. Then, there exists $\eps_0 >0$ such that for every $n \in \N$
\begin{equation*}
\exists i_n > j_n \geq n \left(d(x_{i_n},x_{j_n})>\eps_0 \right).
\end{equation*}
Hence, the metastability property fails for $\eps_0$ and the function $f$ defined by $f(n):=i_n$, for all $n \in \N$.
\end{proof}
Note that the proof that metastability implies the Cauchy property is non-effective. As a consequence, the existence of a computable rate of metastability does not entail a computable Cauchy rate. On the other hand, a function $\gamma$ is a Cauchy rate if and only if $\Gamma(\eps,f):=\gamma(\eps)$ is a rate of metastability (see e.g.\ \cite[Proposition~2.6]{KP(22)}). We now recall the notion of asymptotic regularity (cf. \cite{BP(66)}), and corresponding quantitative notions of rate of asymptotic regularity.
\begin{definition}
Let $(x_n)$ be a sequence in a metric space $(X,d)$ and consider a mapping $T: X \to X$. 
\begin{enumerate}[$(i)$]
\item The sequence $(x_n)$ is \emph{asymptotically regular} if $\lim d(x_{n+1},x_n)=  0$.  A \emph{rate of asymptotic regularity} for $(x_n)$ is a rate of convergence for $\lim d(x_{n+1},x_n)= 0$.

\item The sequence $(x_n)$ is \emph{asymptotically regular} with respect to $T$ if $\lim d(T(x_{n}),x_n)=  0$.  A \emph{rate of asymptotic regularity} for $(x_n)$ with respect to $T$  is a rate of convergence for $\lim d(T(x_{n}),x_n)= 0$.
\end{enumerate}
\end{definition}

\subsection{Useful lemmas}

We recall the following well-known result due to Xu.

\begin{lemma}[\cite{X(02)}]\label{L:Xu}
Let $(a_n) \subset (0,1)$ and $(r_n),(v_n)$ be real sequences such that
\begin{equation*}
(i) \sum a_n = \infty; \qquad  (ii) \limsup r_n \leq 0; \qquad (iii)\sum v_n<\infty.
\end{equation*}
 Let $(s_n)$ be a non-negative real sequence satisfying $s_{n+1}\leq (1-a_n)s_n+ a_nr_n+v_n$, for all $n \in \N$. Then $\lim s_n = 0$.
\end{lemma}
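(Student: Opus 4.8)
The plan is a direct, elementary argument: unfold the recursive inequality over a window $[N,n]$ into a geometric ``memory'' term plus a weighted average of the $r_k$'s plus an accumulated $v_k$-term, and then show that for each prescribed $\eps>0$ all three contributions are eventually below $\eps$. The only genuinely quantitative ingredients are $\sum a_n=\infty$ (to kill the memory term) and the classical estimate $1-x\le e^{-x}$ for $x\in[0,1)$.

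Fix $\eps>0$. Hypothesis $(ii)$ gives $N_0$ with $r_n\le\eps$ for all $n\ge N_0$; hypothesis $(iii)$, via the Cauchy criterion for the convergent series $\sum v_n$, gives $N_1\ge N_0$ with $\bigl|\sum_{k=N_1}^{n}v_k\bigr|\le\eps$ for every $n\ge N_1$ (equivalently $\sum_{k\ge N_1}|v_k|\le\eps$ under the reading $\sum|v_n|<\infty$). Iterating the relation $s_{m+1}\le(1-a_m)s_m+a_mr_m+v_m$ for $m=N_1,\dots,n$ and writing $\pi_{k,n}:=\prod_{j=k+1}^{n}(1-a_j)$ (so that $\pi_{n,n}=1$), one obtains
\[
s_{n+1}\ \le\ \Bigl(\prod_{k=N_1}^{n}(1-a_k)\Bigr)s_{N_1}\ +\ \sum_{k=N_1}^{n}\pi_{k,n}\,a_k\,r_k\ +\ \sum_{k=N_1}^{n}\pi_{k,n}\,v_k.
\]
The key identity is the telescoping $\sum_{k=N_1}^{n}a_k\pi_{k,n}=1-\prod_{k=N_1}^{n}(1-a_k)\le 1$, which follows from $a_k\pi_{k,n}=\pi_{k,n}-\pi_{k-1,n}$; since $r_k\le\eps$, $a_k>0$ and $\pi_{k,n}\in(0,1]$, the middle sum is $\le\eps$. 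The last sum is $\le\eps$ (or $\le 2\eps$ via summation by parts if one only assumes $\sum v_n$ converges, exploiting that $k\mapsto\pi_{k,n}$ is monotone). For the first term, hypothesis $(i)$ gives $\prod_{k=N_1}^{n}(1-a_k)\le\exp\!\bigl(-\sum_{k=N_1}^{n}a_k\bigr)\to 0$, so there is $N_2\ge N_1$ with $\bigl(\prod_{k=N_1}^{n}(1-a_k)\bigr)s_{N_1}\le\eps$ for all $n\ge N_2$. Hence $s_{n+1}\le 3\eps$ for $n\ge N_2$, so $\limsup s_n\le 3\eps$; as $\eps>0$ is arbitrary and $s_n\ge 0$, we conclude $\lim s_n=0$.

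The step that needs most care is the order in which the indices are chosen: $N_1$ must be pinned down first, using only $(ii)$ and $(iii)$, so that the uncontrolled initial behaviour of $(s_n)$ is frozen into the single value $s_{N_1}$; only afterwards can $N_2$ be chosen so as to make the geometric factor in front of $s_{N_1}$ negligible. This is precisely what lets the argument go through with no a priori bound on $s_{N_1}$ in terms of $\eps$, and it is also the feature that makes the lemma amenable to a proof-mining (rate-extraction) treatment. The secondary subtlety is the sign of the $v_k$: with absolute summability the crude estimate suffices, whereas with mere convergence one summation by parts (using monotonicity of $\pi_{k,n}$ in $k$) is needed; in either case no hypothesis beyond $(i)$--$(iii)$ and $s_n\ge 0$ is used.
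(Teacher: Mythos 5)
Your proof is correct, and it is the standard argument: the paper itself gives no proof of this lemma (it is quoted directly from Xu's work), but your unfolding of the recursion into a product ``memory'' term, a telescoping weighted average of the $r_k$, and an accumulated $v_k$-term, followed by the estimate $1-x\le e^{-x}$, is exactly the machinery the paper deploys in its quantitative variants, Lemmas~\ref{L:xu_seq_reals_qt1}--\ref{L:xu_seq_reals_qt4}. So your approach is essentially the same as the paper's, with the minor extra care (summation by parts against the monotone weights $\pi_{k,n}$) you add to cover sign-changing $v_n$ when $\sum v_n$ is only conditionally convergent.
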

Xu's lemma has received several quantitative analyses. The next two results are (essentially) from \cite{LLPP(21),PP(ta)}.

\begin{notation}\hfill
\begin{enumerate}
\item Throughout this paper $\lceil x\rceil$ is defined as $\max\{0, \lceil x\rceil\}$
with the usual definition of $\lceil\cdot\rceil$ in the latter.
\item Consider a function $\varphi$ on tuples of variables $\bar{x}$, $\bar{y}$. If we wish to consider the variables $\bar{x}$ as parameters we write $\varphi[\bar{x}](\bar{y})$. For simplicity of notation we may then even omit the parameters and simply write $\varphi(\bar{y})$.
\end{enumerate}
\end{notation}
\begin{lemma}\label{L:xu_seq_reals_qt1}
	Let $(s_n)$ be a bounded sequence of non-negative real numbers and $D\in\N\setminus\{0\}$ an upper bound on $(s_n)$. Consider sequences of real numbers $(a_n)\subset\,[0,1]$, $(r_n)\subset \R$ and $(v_n)\subset \R^+_0$ and functions ${\rm A}:\N \to \N$ and ${\rm R}$, ${\rm V}: (0,+\infty)\to \N$ such that
	\begin{enumerate}[$(i)$]
		\item ${\rm A}$ is a rate of divergence for $\left(\sum a_n\right)$,
		\item ${\rm R}$ is such that $\forall \eps >0\, \forall n\geq {\rm R}(\eps) \, \left( r_n \leq \eps \right)$,
		\item ${\rm V}$ is a Cauchy rate for $\left(\sum v_n\right)$.
	\end{enumerate}
	Assume that for all $n\in \N$, $s_{n+1}\leq (1-a_n)s_n+a_nr_n + v_n$. Then $\lim s_n =0$ with rate of convergence
		\[\theta(\eps):=\theta[{\rm A}, {\rm R}, {\rm V}, D](\eps):={\rm A}\left(K+\left\lceil \ln\left(\frac{3D}{\eps}\right)\right\rceil\right)+1,\, \text{ with }\, K:=\max\left\{ {\rm R}\left(\frac{\eps}{3}\right), {\rm V}\left(\frac{\eps}{3}\right)+1 \right\}.
		\]
		Moreover,
		\begin{enumerate}[$(1)$]
		\item If $r_n\equiv 0$, then the function $\theta$ is simplified to
		\[
		\theta(\eps):=\widehat{\theta}[{\rm A},{\rm V},D](\eps):={\rm A}\left({\rm V}\left(\frac{\eps}{2}\right)+\left\lceil \ln\left(\frac{2D}{\eps}\right)\right\rceil+1\right)+1.
		\]
		\item If $v_n\equiv 0$, then the function $\theta$ is simplified to
		\[
		\theta(\eps):=\widecheck{\theta}[{\rm A},{\rm R},D](\eps):={\rm A}\left({\rm R}\left(\frac{\eps}{2}\right)+\left\lceil \ln\left(\frac{2D}{\eps}\right)\right\rceil\right)+1.
		\]
	\end{enumerate}
\end{lemma}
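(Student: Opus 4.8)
The plan is to follow the classical proof of Xu's Lemma~\ref{L:Xu}, keeping explicit track of every threshold rather than taking limits. Fix $\eps>0$ and set $K:=\max\{\mathrm{R}(\eps/3),\,\mathrm{V}(\eps/3)+1\}$ as in the statement. Unfolding the recursion $s_{n+1}\leq(1-a_n)s_n+a_nr_n+v_n$ from index $K$ up to an arbitrary $n>K$ yields
\[
s_n \;\leq\; \Big(\prod_{k=K}^{n-1}(1-a_k)\Big)s_K \;+\; \sum_{k=K}^{n-1}\Big(\prod_{j=k+1}^{n-1}(1-a_j)\Big)a_kr_k \;+\; \sum_{k=K}^{n-1}\Big(\prod_{j=k+1}^{n-1}(1-a_j)\Big)v_k ,
\]
and the strategy is to bound each of the three summands by $\eps/3$, so that their sum is $\leq\eps$.

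For the middle summand, $K\geq\mathrm{R}(\eps/3)$ gives $r_k\leq\eps/3$ for all $k\geq K$, hence $a_kr_k\leq\tfrac{\eps}{3}a_k$, and the telescoping identity $\sum_{k=K}^{n-1}a_k\prod_{j=k+1}^{n-1}(1-a_j)=1-\prod_{k=K}^{n-1}(1-a_k)\leq 1$ bounds it by $\eps/3$. For the last summand, dropping the factors $\prod_{j=k+1}^{n-1}(1-a_j)\in[0,1]$ and using $K\geq\mathrm{V}(\eps/3)+1$ with the definition of a Cauchy rate for a non-negative series gives $\sum_{k=K}^{n-1}v_k\leq\sum_{k=\mathrm{V}(\eps/3)+1}^{n-1}v_k\leq\eps/3$. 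For the first summand, $1-a_k\leq e^{-a_k}$ and $s_K\leq D$ give the bound $D\exp\big(-\sum_{k=K}^{n-1}a_k\big)$, so it suffices to force $\sum_{k=K}^{n-1}a_k\geq\ln(3D/\eps)$; since every $a_k\leq 1$ we have $\sum_{k=0}^{K-1}a_k\leq K$, so it is enough that $\sum_{k=0}^{n-1}a_k\geq K+\ln(3D/\eps)$, and since $\mathrm{A}$ is a rate of divergence for $\sum a_n$ this holds whenever $n-1\geq\mathrm{A}\big(K+\lceil\ln(3D/\eps)\rceil\big)$, i.e. $n\geq\theta(\eps)$. (The borderline case $\theta(\eps)=K$ only occurs when $\lceil\ln(3D/\eps)\rceil=0$, i.e. $\eps\geq 3D$, in which case $s_n\leq D<\eps$ is already trivial.)

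The two ``moreover'' clauses follow from the identical computation with a two-way split of $\eps$ instead of a three-way one: if $r_n\equiv 0$ the middle summand disappears, so one takes $K:=\mathrm{V}(\eps/2)+1$ and only needs $\sum_{k=K}^{n-1}a_k\geq\ln(2D/\eps)$, which unwinds to $\widehat{\theta}(\eps)$; if $v_n\equiv 0$ the last summand disappears, so one takes $K:=\mathrm{R}(\eps/2)$ and again needs $\sum_{k=K}^{n-1}a_k\geq\ln(2D/\eps)$, giving $\widecheck{\theta}(\eps)$.

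I expect the only delicate point to be the bookkeeping in the first summand: converting ``$\mathrm{A}$ is a rate of divergence for $\sum_{k=0}^{n}a_k$'' into a lower bound for the \emph{tail} sum $\sum_{k=K}^{n-1}a_k$ is precisely what produces the additive shift by $K$ in the argument of $\mathrm{A}$, and one must check that the ceilings and the various $\pm 1$ shifts line up exactly with the stated closed forms. Everything else is the standard Xu argument and reduces to routine estimates.
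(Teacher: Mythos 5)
Your proof is correct and is exactly the approach used in the paper's source references and in the paper's own proof of the closely related Lemma~\ref{L:xu_seq_reals_qt3}: unfold the recursion from index $K$, split off the $r$- and $v$-contributions, bound each by $\eps/3$ using the monotone tails (for $r$) and the Cauchy rate (for $v$), and force the product term below $\eps/3$ via $1-x\le e^{-x}$ together with the rate of divergence ${\rm A}$, converting the required tail bound $\sum_{k=K}^{n-1}a_k\ge\ln(3D/\eps)$ into a bound on $\sum_{k=0}^{n-1}a_k$ by adding $K$ (since $a_k\le 1$). The paper does not reprove Lemma~\ref{L:xu_seq_reals_qt1} itself but cites \cite{LLPP(21),PP(ta)}; your bookkeeping of the $\pm 1$ shifts, the edge case $\eps\ge 3D$, and the two simplified variants all check out.
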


Instead of considering $\sum a_n = \infty$, one can work with the following equivalent condition
\begin{equation*}
	\forall m\in \N\, \left(\prod_{i= m}^{\infty} (1-a_i)=0\right).
\end{equation*}
 Hence, it makes sense to also consider a corresponding quantitative hypothesis:
\begin{equation}\tag{$Q^{\star}$}\label{q2'}
	\begin{gathered}
		{\rm A'}:\N\times (0,+\infty) \to\N \text{ is a function satisfying}\\
		\forall \eps >0 \, \forall m\in \N\, \left( \prod_{i=m}^{{\rm A}'(m,\eps)}(1-a_i)\leq \eps\right),
	\end{gathered}
\end{equation}
i.e.\ ${\rm A}'(m, \cdot)$ is a rate of convergence towards zero for the sequence $\left(\prod_{i=m}^{n}(1-a_i)\right)_n$. 

Next we state a quantitative version of Lemma~\ref{L:Xu} which relies on the condition \eqref{q2'} (see  also \cite[Lemma 2.4]{K(15)} and \cite{LLPP(21)}).
\begin{lemma}\label{L:xu_seq_reals_qt2}
	Let $(s_n)$ be a bounded sequence of non-negative real numbers and $D\in\N\setminus\{0\}$ an upper bound on $(s_n)$. Consider sequences of real numbers $(a_n)\subset\, [0,1]$, $(r_n)\subset \R$ and $(v_n)\subset \R^+_0$, and functions ${\rm A'}: \N\times (0, +\infty) \to \N$ and ${\rm R}$, ${\rm V}:(0,+\infty) \to \N$ such that
	\begin{enumerate}[$(i)$]
		\item ${\rm A'}$ satisfies \eqref{q2'},
		\item ${\rm R}$ is such that $\forall \eps>0 \, \forall n\geq {\rm R}(\eps) \, \left( r_n \leq \eps \right)$,
		\item ${\rm V}$ is a Cauchy rate for $(\sum v_n)$.
	\end{enumerate}
	Assume that for all $n\in \N$, $s_{n+1}\leq (1-a_n)s_n+a_nr_n + v_n$. Then	$ \lim s_n=0$ with rate of convergence
	\[
	\theta'[{\rm A'}, {\rm R}, {\rm V}, D](\eps):={\rm A'}\left(K, \frac{\eps}{3D}\right)+1,\, \text{ with }\, K\, \text{ as in Lemma~\ref{L:xu_seq_reals_qt1}}.
	\]
	Moreover,
	\begin{enumerate}[$(1)$]
		\item If $r_n\equiv 0$, then the function $\theta'$ is simplified to
		\[
		\theta'(\eps):=\widehat{\theta}'[{\rm A},{\rm V},D](\eps):={\rm A'}\left({\rm V}\left(\frac{\eps}{2}\right)+1, \frac{\eps}{2D}\right)+1.
		\]
		\item If $v_n\equiv 0$, then the function $\theta'$ is simplified to
		\[
		\theta'(\eps):=\widecheck{\theta}'[{\rm A},{\rm R},D](\eps):={\rm A'}\left({\rm R}\left(\frac{\eps}{2}\right), \frac{\eps}{3D}\right)+1.
		\]
	\end{enumerate}
\end{lemma}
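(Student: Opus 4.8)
The plan is to carry out the classical telescoping proof of Xu's lemma (Lemma~\ref{L:Xu}) while keeping explicit track of all the quantitative data. First I would fix an arbitrary index $m$ and unfold the recurrence $s_{n+1}\leq(1-a_n)s_n+a_nr_n+v_n$ up to $n\geq m$, establishing by induction on $n$ that
\begin{equation*}
s_{n+1}\leq\Bigl(\prod_{i=m}^{n}(1-a_i)\Bigr)s_m+\sum_{i=m}^{n}\Bigl(\prod_{j=i+1}^{n}(1-a_j)\Bigr)a_ir_i+\sum_{i=m}^{n}\Bigl(\prod_{j=i+1}^{n}(1-a_j)\Bigr)v_i,
\end{equation*}
together with the telescoping identity $\sum_{i=m}^{n}\bigl(\prod_{j=i+1}^{n}(1-a_j)\bigr)a_i=1-\prod_{i=m}^{n}(1-a_i)$, again by induction (using $1-a_{n+1}\geq 0$ to preserve the inequality in the inductive step). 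Since each factor $1-a_j$ lies in $[0,1]$, all products occurring here are non-negative and the partial products $\prod_{i=m}^{n}(1-a_i)$ are non-increasing in $n$; these two observations are used throughout.

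Given $\eps>0$, I would then take $m:=K=\max\{{\rm R}(\eps/3),{\rm V}(\eps/3)+1\}$ and bound the three summands separately. For $i\geq m\geq{\rm R}(\eps/3)$ hypothesis $(ii)$ gives $r_i\leq\eps/3$, so by the telescoping identity the middle sum is at most $\tfrac{\eps}{3}\bigl(1-\prod_{i=m}^{n}(1-a_i)\bigr)\leq\tfrac{\eps}{3}$ (negative values of $r_i$ only help). In the $(v_i)$-sum each product factor is $\leq1$ and $v_i\geq0$, so it is at most $\sum_{i=m}^{n}v_i\leq\tfrac{\eps}{3}$, using that $m-1\geq{\rm V}(\eps/3)$ and that ${\rm V}$ is a Cauchy rate for $\sum v_n$. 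For the first term, $\bigl(\prod_{i=m}^{n}(1-a_i)\bigr)s_m\leq D\prod_{i=m}^{n}(1-a_i)$, and by \eqref{q2'} together with monotonicity of the partial products we get $\prod_{i=m}^{n}(1-a_i)\leq\prod_{i=m}^{{\rm A}'(K,\,\eps/(3D))}(1-a_i)\leq\tfrac{\eps}{3D}$ whenever $n\geq{\rm A}'(K,\eps/(3D))$, so this term is $\leq\tfrac{\eps}{3}$. Adding up, $s_{n+1}\leq\eps$ for every $n\geq{\rm A}'(K,\eps/(3D))$, i.e.\ $s_n\leq\eps$ for every $n\geq\theta'(\eps)={\rm A}'(K,\eps/(3D))+1$, which is the claimed rate.

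The two ``moreover'' statements are obtained by the same computation, dropping the term that vanishes and splitting $\eps$ into two halves instead of three thirds: with $r_n\equiv0$ one only needs $K={\rm V}(\eps/2)+1$ and $\prod_{i=K}^{n}(1-a_i)\leq\eps/(2D)$, yielding $\widehat{\theta}'(\eps)={\rm A}'({\rm V}(\eps/2)+1,\eps/(2D))+1$; with $v_n\equiv0$ one only needs $K={\rm R}(\eps/2)$ and $\prod_{i=K}^{n}(1-a_i)\leq\eps/(2D)$, which is implied by the (non-tight) bound $\leq\eps/(3D)$ recorded in the statement, yielding $\widecheck{\theta}'(\eps)$. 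There is no conceptual difficulty here: this is exactly the quantitative rendering of the proof of Lemma~\ref{L:Xu}, in the spirit of \cite{LLPP(21),K(15)}, now keyed to the hypothesis \eqref{q2'} rather than to a rate of divergence for $\sum a_n$. The only points requiring care, and which I expect to be the most tedious part, are the two inductions, the monotonicity of the partial products, the harmless sign of $r_i$, and the degenerate regimes: when $\eps\geq 3D$ the claim is immediate since $s_n\leq D<\eps$, and when $\eps<3D$ one has $\eps/(3D)<1$, so \eqref{q2'} forces ${\rm A}'(K,\eps/(3D))\geq K$, guaranteeing that the index range $[m,n]$ over which the recurrence is unfolded is non-empty for all $n\geq{\rm A}'(K,\eps/(3D))$.
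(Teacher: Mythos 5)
Your proof is correct. The paper itself does not prove Lemma~\ref{L:xu_seq_reals_qt2} (it cites \cite{LLPP(21),PP(ta)} and \cite[Lemma 2.4]{K(15)}), but your telescoping argument is exactly the standard quantitative rendering of Xu's lemma, and it is the same method the paper uses in the proofs of the closely related Lemmas~\ref{L:xu_seq_reals_qt3} and~\ref{L:xu_seq_reals_qt4}, with the rate of divergence ${\rm A}$ replaced by the function ${\rm A}'$ from \eqref{q2'} at the final step.
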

We will also require particular instances of Lemma~\ref{L:Xu} that allow for a error term. The next two results are trivial variants of \cite[Lemmas 5.2 and 5.3]{KL(12)}. For completeness, we include the proofs nevertheless.
\begin{lemma}\label{L:xu_seq_reals_qt3}
	Let $(s_n)$ be a bounded sequence of non-negative real numbers and $D\in\N\setminus\{0\}$ an upper bound on $(s_n)$. Consider sequences of real numbers $(a_n)\subset\, [0,1]$, $(r_n)\subset \R$ and assume that $\sum a_n=\infty$ with a rate of divergence ${\rm A}$. Let $\eps>0$, $K, P\in \N$ be given. If for all $n\in [K,P]$
	\begin{equation*}
		(i)\quad s_{n+1}\leq (1-a_n)s_n+a_nr_n+\mathcal{E}\,,\qquad\qquad (ii)\quad r_n\leq \frac{\eps}{3}\,,\qquad\qquad (iii)\quad \mathcal{E}\leq \frac{\eps}{3(P+1)}, 
	\end{equation*}
	then $\forall n \in [\sigma, P] \left( s_n\leq \eps\right)$, where 
	\begin{equation*}
		\sigma:=\sigma[{\rm A},D](\eps,K):={\rm A}\left(K+\left\lceil\ln\left( \frac{3D}{\eps}\right)\right\rceil\right)+1.
	\end{equation*}
\end{lemma}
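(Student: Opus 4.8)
The plan is to prove this by a finite induction on $n \in [K, P]$, mimicking the standard iterative proof of Xu's lemma but tracking the accumulated error. First I would set up the induction at the base point: by hypothesis $(iii)$ (or rather by choosing the threshold appropriately) one shows that $\sigma \geq K$, and then that $s_{\sigma} \leq \eps$; more precisely, the natural strategy is to prove the stronger intermediate claim that for all $n \in [K, P]$,
\[
s_{n+1} \leq \left(\prod_{i=K}^{n}(1-a_i)\right) s_K + \frac{\eps}{3} + (n+1-K)\,\mathcal{E},
\]
which follows by a routine induction using $(i)$ and $(ii)$: unfolding the recursion one step at a time, the $r_n$-terms telescope against the products $\prod(1-a_i)$ to contribute at most $\eps/3$ (since each $r_n \leq \eps/3$ and $\sum_{i=K}^{n} a_i \prod_{j=i+1}^{n}(1-a_j) \leq 1$), while the $\mathcal{E}$-terms simply accumulate additively to at most $(n+1-K)\mathcal{E} \leq (P+1)\mathcal{E} \leq \eps/3$ by $(iii)$.

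Next I would bound the first term. Since $D$ bounds $(s_n)$, we have $s_K \leq D$, so it suffices to force $\prod_{i=K}^{n}(1-a_i) \leq \eps/(3D)$ for all $n \geq \sigma - 1$. Using the elementary estimate $\prod_{i=K}^{n}(1-a_i) \leq \exp\!\left(-\sum_{i=K}^{n} a_i\right)$, this holds as soon as $\sum_{i=K}^{n} a_i \geq \ln(3D/\eps)$, i.e. as soon as $\sum_{i=0}^{n} a_i \geq \sum_{i=0}^{K-1} a_i + \ln(3D/\eps)$; but here one must be slightly careful, since we only know a rate of divergence ${\rm A}$ for $\sum a_n$ and not the partial sum up to $K$. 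The clean way around this — and the reason the definition of $\sigma$ has the shape it does — is to observe that $\sum_{i=0}^{K-1} a_i \leq K$ (as each $a_i \leq 1$), so $n \geq {\rm A}\!\left(K + \lceil \ln(3D/\eps)\rceil\right)$ already guarantees $\sum_{i=0}^{n} a_i \geq K + \lceil \ln(3D/\eps)\rceil \geq \sum_{i=0}^{K-1} a_i + \ln(3D/\eps)$, hence $\sum_{i=K}^{n} a_i \geq \ln(3D/\eps)$ and therefore $\prod_{i=K}^{n}(1-a_i)\, s_K \leq \eps/3$.

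Combining the three pieces: for every $n \in [\sigma, P]$ we have $n - 1 \geq \sigma - 1 = {\rm A}(K + \lceil\ln(3D/\eps)\rceil)$, so applying the intermediate claim at index $n-1$ gives $s_n \leq \eps/3 + \eps/3 + \eps/3 = \eps$, as required. I do not expect any serious obstacle here — the result is explicitly flagged as a trivial variant of \cite[Lemma 5.2]{KL(12)}; the only point requiring genuine care is the bookkeeping that makes the error term $\mathcal{E}$ contribute at most $\eps/3$ uniformly over the window $[K,P]$, which is exactly why the hypothesis $(iii)$ scales $\mathcal{E}$ by $1/(P+1)$ rather than by something $n$-dependent, and the mild subtlety of absorbing the unknown partial sum $\sum_{i=0}^{K-1} a_i$ into the argument of ${\rm A}$ via the crude bound $K$.
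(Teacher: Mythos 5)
Your proposal is correct and follows the same route as the paper's proof: the same finite-induction intermediate estimate (the paper keeps the slightly sharper factor $(1-\prod(1-a_i))\eps/3$ where you just write $\eps/3$, but both bound it by $\eps/3$ anyway), the same exponential bound on the product, and the same trick of absorbing the unknown $\sum_{i=0}^{K-1}a_i$ into the argument of ${\rm A}$ via $\sum_{i=0}^{K-1}a_i\leq K$. The only details you leave implicit — dispatching the trivial cases $\sigma>P$ and $\eps\geq D$, and verifying $\sigma\geq K+1$ via ${\rm A}(j+1)\geq j$ — are exactly the ones the paper spells out, and your plan explicitly flags that this bookkeeping is needed.
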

\begin{proof}
	First note that if $\sigma>P$, then the result is trivial. Hence we may assume that $\sigma\leq P$ (which in particular implies that $K\leq P$). Next, we may assume that $\eps <D$, otherwise the result also trivially holds. Now, by induction one easily sees that for all $m\leq P-K$,
	\begin{equation*}
		s_{K+m+1}\leq \left( \prod_{i=K}^{K+m} (1-a_i) \right)s_K+\left( 1-\prod_{i=K}^{K+m}(1-a_i) \right)\frac{\eps}{3}+(m+1)\mathcal{E}.
	\end{equation*}
	Hence, for all $m\leq P-K$,
	\begin{equation}\label{e:Xuquant}
		s_{K+m+1}\leq \left( \prod_{i=K}^{K+m} (1-a_i) \right)s_K+\frac{\eps}{3}+(P+1)\mathcal{E}\leq \left( \prod_{i=K}^{K+m} (1-a_i) \right)D+\frac{2\eps}{3}. 
	\end{equation}
	Since $j+1\leq \sum_{i=0}^{{\rm A}(j+1)}a_i\leq {\rm A}(j+1)+1$, we have ${\rm A}(j+1)\geq j$ for all $j\in\N$. By the assumption on $\eps$, we have $3D/\eps\geq 3$ and so $\lceil\ln(3D/\eps)\rceil\geq 1$. Hence ${\rm A}(K+\lceil\ln(3D/\eps)\rceil)\geq K$, and can consider the natural number $M:={\rm A}(K+\lceil\ln(3D/\eps)\rceil)-K$.
	
	For all $m\geq M$, we have
	\begin{equation*}
		\sum_{i=0}^{K+m}a_i\geq \sum_{i=0}^{K+M}a_i=\sum_{i=0}^{{\rm A}(K+\lceil\ln(3D/\eps)\rceil)}a_i\geq K+\ln\left(\frac{3D}{\eps}\right)\geq \sum_{i=0}^{K-1}a_i+ \ln\left(\frac{3D}{\eps}\right).
	\end{equation*}
	which entails $\sum_{i=K}^{K+m}a_i\geq \ln(3D/\eps)$. Using the inequality $\forall x\in\R^+_0 \left(1-x\leq \exp(-x)\right)$, we derive for all $m\geq M$,
	\begin{equation*}
		\left(\prod_{i=K}^{K+m}(1-a_i)\right)D\leq \exp\left(-\sum_{i=K}^{K+m}a_i\right)D\leq \frac{\eps}{3D}D=\frac{\eps}{3}. 
	\end{equation*}
	Together with \eqref{e:Xuquant}, we have thus concluded that for $m\in[M, P-K]$,
	\begin{equation*}
		s_{K+m+1}\leq \left( \prod_{i=K}^{K+m} (1-a_i) \right)D+\frac{2\eps}{3}\leq \eps,
	\end{equation*}
	which entails the result.
\end{proof}
As before, instead of a rate of divergence ${\rm A}$, we can stablish an analogous of the previous result with a function ${\rm A'}$ satisfying \eqref{q2'}.
\begin{lemma}\label{L:xu_seq_reals_qt4}
	Let $(s_n)$ be a bounded sequence of non-negative real numbers and $D\in\N\setminus\{0\}$ an upper bound on $(s_n)$. Consider sequences of real numbers $(a_n)\subset\, [0,1]$, $(r_n)\subset \R$ assume that $\sum a_n=\infty$ with a function ${\rm A'}$ satisfying \eqref{q2'}. Let $\eps>0$, $K, P\in \N$ be given. If for all $n\in [K,P]$
	\begin{equation*}
		(i)\quad s_{n+1}\leq (1-a_n)s_n+a_nr_n+\mathcal{E}\,,\qquad\qquad (ii)\quad r_n\leq \frac{\eps}{3}\,,\qquad\qquad (iii)\quad \mathcal{E}\leq \frac{\eps}{3(P+1)}, 
	\end{equation*}
	then $\forall n \in [\sigma', P] \left( s_n\leq \eps\right)$, where 
	\begin{equation*}
		\sigma':=\sigma'[{\rm A'},D](\eps,K):={\rm A'}\left(K, \frac{\eps}{3D}\right)+1.
	\end{equation*}
\end{lemma}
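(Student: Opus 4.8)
The plan is to adapt the proof of Lemma~\ref{L:xu_seq_reals_qt3} almost verbatim, replacing every appeal to the rate of divergence ${\rm A}$ by the corresponding appeal to the function ${\rm A'}$ satisfying \eqref{q2'}. As before, I would first dispose of the trivial cases: if $\sigma'>P$ there is nothing to prove, and if $\eps\geq D$ the bound $s_n\leq D\leq\eps$ is immediate; so assume $\sigma'\leq P$ (hence $K\leq P$) and $\eps<D$.

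Next I would run the same elementary induction on $m$ to obtain, for all $m\leq P-K$,
\[
	s_{K+m+1}\leq \left(\prod_{i=K}^{K+m}(1-a_i)\right)s_K+\left(1-\prod_{i=K}^{K+m}(1-a_i)\right)\frac{\eps}{3}+(m+1)\mathcal{E},
\]
using hypotheses $(i)$ and $(ii)$ on the interval $[K,P]$. Dropping the $(1-\prod(\cdots))$ factor on the middle term and using $(iii)$ with $m+1\leq P+1$ gives
\[
	s_{K+m+1}\leq \left(\prod_{i=K}^{K+m}(1-a_i)\right)D+\frac{\eps}{3}+(P+1)\mathcal{E}\leq \left(\prod_{i=K}^{K+m}(1-a_i)\right)D+\frac{2\eps}{3}.
\]
This part is identical to the previous lemma and requires no new idea.

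The only genuinely different step is controlling the product $\prod_{i=K}^{K+m}(1-a_i)$. Here, instead of converting a lower bound on partial sums of $a_i$ into an exponential estimate, I would invoke \eqref{q2'} directly: by definition of ${\rm A'}$, for the index $m^\ast$ with $K+m^\ast={\rm A'}\!\left(K,\frac{\eps}{3D}\right)$ (so $m^\ast$ is defined whenever ${\rm A'}\!\left(K,\frac{\eps}{3D}\right)\geq K$, which we may assume, enlarging ${\rm A'}$ pointwise by $K$ if necessary, or noting it is harmless) we have $\prod_{i=K}^{K+m^\ast}(1-a_i)\leq \frac{\eps}{3D}$, and since the partial products are non-increasing in $m$, the same bound $\prod_{i=K}^{K+m}(1-a_i)\leq\frac{\eps}{3D}$ holds for every $m\geq m^\ast$. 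Plugging this into the displayed inequality yields $s_{K+m+1}\leq \frac{\eps}{3D}\cdot D+\frac{2\eps}{3}=\eps$ for all $m$ with $K+m+1\geq K+m^\ast+1={\rm A'}\!\left(K,\frac{\eps}{3D}\right)+1=\sigma'$ (and $m\leq P-K$), which is exactly the claimed conclusion $\forall n\in[\sigma',P]\,(s_n\leq\eps)$. I do not anticipate a real obstacle: the monotonicity of the partial products in $m$ is what makes \eqref{q2'} usable here, and everything else is bookkeeping inherited from Lemma~\ref{L:xu_seq_reals_qt3}; the only point to state carefully is that $s_n\leq D\leq\eps$ handles the indices $n\in[K,\sigma')$ trivially when combined with the case $\eps\geq D$, while for $\eps<D$ the estimate above covers $[\sigma',P]$.
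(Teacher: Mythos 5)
Your proposal is correct and follows essentially the same route as the paper: the only change from Lemma~\ref{L:xu_seq_reals_qt3} is to bound $\prod_{i=K}^{K+m}(1-a_i)$ directly via \eqref{q2'} together with monotonicity of the partial products, which is exactly what the paper does by setting $M:={\rm A'}(K,\eps/3D)-K$. The side remark you raise about whether ${\rm A'}(K,\eps/3D)\geq K$ is indeed the point the paper disposes of with ``from the assumption on $\eps$'': since $\eps<D$ forces $\eps/3D<1$, any function satisfying \eqref{q2'} must return an index $\geq K$, so no modification of ${\rm A'}$ is needed.
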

\begin{proof}
	Following the proof of Lemma~\ref{L:xu_seq_reals_qt3}, we have for all $m\leq P-K$,
	\begin{equation*}
	s_{K+m+1}\leq \left( \prod_{i=K}^{K+m} (1-a_i) \right)D+\frac{2\eps}{3}.
	\end{equation*}
	Again from the assumption on $\eps$, we may consider the natural number $M:={\rm A'}\left(K, \eps/3D\right)-K$. For all $m\geq M$,
	\begin{equation*}
		\left(\prod_{i=K}^{K+m}(1-a_i)\right)D\leq \left(\prod_{i=K}^{K+M}(1-a_i)\right)D=\left(\prod_{i=K}^{{\rm A'}\left(K, \eps/3D\right)}(1-a_i)\right)D\leq \frac{\eps}{3},
	\end{equation*}
	which as before entails the result.
\end{proof}
\section{Asymptotic regularity}\label{s:asymptoticregularity}
In this section we prove that a sequence $(x_n)$ generated by \eqref{e:MannHalpern} is an asymptotically regular sequence of approximate common fixed points of $T$ and $U$.
We will assume that the conditions $(i)-(v)$ of Theorem~\ref{t:main} hold and that their corresponding quantitative information is given explicitly. Namely, given $(\alpha_n),(\beta_n)\subseteq [0,1]$, we consider the following conditions: 
\begin{enumerate}[($Q_1$)]
	\item\label{Q1} $\Gamma_1:(0,+\infty)\to \N$ is a rate of convergence for $\lim \alpha_n=0$, i.e.
	\[
	\forall \eps >0 \, \forall n \geq \Gamma_1(\eps) \left(\alpha_n \leq \eps \right).
	\]
	\item\label{Q2} $\Gamma_2:\N\to\N$ is a rate of divergence for $\left( \sum_{n\geq 0} \alpha_n\right)$, i.e.
	\[
	\forall k \in \N \left(\sum_{i=0}^{\Gamma_2(k)}\alpha_i \geq k \right).
	\]
	\item\label{Q3}  $\Gamma_3:(0,+\infty)\to \N$ is a Cauchy rate  for $\left(\sum_{n\geq 0}|\alpha_{n+1}-\alpha_{n}|\right)$, i.e. 
	\[\forall \eps >0 \, \forall n \in \N \left(\sum_{i=\Gamma_3(\eps)+1}^{\Gamma_3(\eps)+n}|\alpha_{i+1}-\alpha_{i}|\leq \eps \right).\]
	\item\label{Q4}  $\Gamma_4:(0,+\infty)\to \N$ is a Cauchy rate  for $\left(\sum_{n\geq 0}|\beta_{n+1}-\beta_{n}|\right)$.
	\item\label{Q5} $\gamma\in(0,1/2]$ is such that $\gamma \leq \beta_n \leq 1-\gamma$, for all $n \in \N$.
\end{enumerate}
We will assume furthermore that the functions above satisfy the natural monotonicity conditions:
\begin{enumerate}
	\item if $\eps \leq \eps' $ then $\Gamma_i(\eps)\geq \Gamma_i(\eps')$, for $i=1,3,4$
	\item if $k \leq k' $ then $\Gamma_2(k)\leq \Gamma_2(k')$.
\end{enumerate}
\begin{example}\label{ex:1}
	Consider the sequences defined by $\alpha_n:=\frac{1}{n+1}$, $\beta_n:=\beta \in (0,1)$ for all $n \in \N$. Then the conditions $(Q_{\ref{Q1}})-(Q_{\ref{Q5}})$ are satisfied with 
	\[
	\Gamma_1(\eps)=\Gamma_3(\eps):= \lfloor \eps^{-1}\rfloor, \quad \Gamma_2(k):=\lfloor e^{k}\rfloor, \quad \Gamma_4 :\equiv 0, \quad \text{ and } \quad \gamma:=\min \{ \beta, 1- \beta\}.
	\]
\end{example}
We start by showing that $(x_n)$ is bounded.
\begin{lemma}\label{l:bounded}
	Consider a natural number $N \in \N \setminus \{0\}$ such that $N \geq \max\{d(x_0,p),d(u,p)\}$, for some $p \in F$.
	We have that \begin{equation}\label{e:bounded}
		d(x_{n},p) \leq N
	\end{equation}
	and, in particular, the sequence $(x_n)$ is bounded.
\end{lemma}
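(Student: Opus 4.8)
The plan is to prove the bound $d(x_n,p)\le N$ by induction on $n$, exploiting that both the Halpern-type step and the Mann-type step in \eqref{e:MannHalpern} are convex combinations, together with the nonexpansivity of $T$ and $U$ and the fact that $p$ is a common fixed point. The base case $n=0$ is immediate from the choice of $N$. For the induction step, I would treat the two cases according to the parity of $n$ separately, using the inequality (W1) (equivalently \eqref{CN}, but (W1) suffices here) to split the distance of a convex combination to $p$.

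\begin{proof}
	Let $N\in\N\setminus\{0\}$ with $N\ge\max\{d(x_0,p),d(u,p)\}$ for some $p\in F$. We prove \eqref{e:bounded} by induction on $n$. For $n=0$ the inequality holds by the choice of $N$.

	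Assume $d(x_{2n},p)\le N$. Since $p\in\Fix(T)$ and $T$ is nonexpansive, $d(T(x_{2n}),p)=d(T(x_{2n}),T(p))\le d(x_{2n},p)\le N$. Using (W1) with $z=p$,
	\[
	d(x_{2n+1},p)=d\big((1-\alpha_n)T(x_{2n})\oplus\alpha_n u,\,p\big)\le (1-\alpha_n)d(T(x_{2n}),p)+\alpha_n d(u,p)\le (1-\alpha_n)N+\alpha_n N=N,
	\]
	since $d(u,p)\le N$. Next, assuming $d(x_{2n+1},p)\le N$, from $p\in\Fix(U)$ and nonexpansivity of $U$ we get $d(U(x_{2n+1}),p)\le d(x_{2n+1},p)\le N$, and again by (W1) with $z=p$,
	\[
	d(x_{2n+2},p)=d\big((1-\beta_n)U(x_{2n+1})\oplus\beta_n x_{2n+1},\,p\big)\le (1-\beta_n)d(U(x_{2n+1}),p)+\beta_n d(x_{2n+1},p)\le N.
	\]
	By induction, $d(x_n,p)\le N$ for all $n\in\N$, so $(x_n)$ is bounded.
\end{proof}

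The argument is entirely routine; there is no real obstacle. The only points requiring a little care are making sure that both the odd-indexed and even-indexed update rules are covered (which is why the induction step is split into the two displayed sub-steps, effectively a double induction over the two recurrences), and recording that the $N$-bound is preserved because the anchor $u$ and the starting point $x_0$ are both within $N$ of $p$, so every convex combination stays within $N$ of $p$. Nonexpansivity of $T$ and $U$ together with $p\in F=\Fix(T)\cap\Fix(U)$ is exactly what lets us pull the bound through the applications of $T$ and $U$.
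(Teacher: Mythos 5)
Your proof is correct and follows essentially the same approach as the paper: induction on $n$, splitting the step according to parity, and using (W1) together with the nonexpansivity of $T$ and $U$ and $p\in F$ to pull the bound through each update. The paper organizes it slightly differently (it first records $d(x_{2n+2},p)\le d(x_{2n+1},p)$ and then inducts on the even-indexed subsequence), but the computation is identical.
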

\begin{proof}
	Since  $U$ is nonexpansive and $U(p)=p$, using (W1) we have
	\begin{equation}\label{e:evenindices}
		d(x_{2n+2},p)\leq (1-\alpha_n)d(U(x_{2n+1}),p)+\alpha_nd(x_{2n+1},p)= d(x_{2n+1},p).
	\end{equation}
	By induction, we have that $\forall n \in \N \left(d(x_{2n},p)\leq N \right)$.
	The base case is trivial. If the induction hypothesis holds for $n$, using the fact that $T$ is nonexpansive, $T(p)=p$, and again (W1), we get
	\begin{equation}\label{e:oddindices}
		d(x_{2n+1},p)\leq (1-\alpha_n)d(x_{2n},p)+\alpha_n d(u,p) \leq (1-\alpha_n)N+\alpha_n N=N.
	\end{equation}
	By \eqref{e:evenindices}, the induction step follows. Moreover, from \eqref{e:oddindices} it is clear that the bound is also valid for the odd terms, and we conclude the result.
\end{proof}
The next lemma allows to prove the asymptotically regularity of the sequence $(x_n)$.
\begin{lemma}\label{l:artheta}
	Consider a natural number $N \in \N \setminus \{0\}$ such that $N \geq \max\{d(x_0,p),d(u,p)\}$, for some $p \in F$.
	If conditions $(Q_{\ref{Q1}})-(Q_{\ref{Q5}})$ hold, then
	\begin{enumerate}
		\item[$(i)$] $\theta_1$ is a rate of convergence for $\lim d(x_{2n+2},x_{2n})=0$,
		\item[$(ii)$] $\theta_2$ is a rate of convergence for $\lim d(x_{2n+3},x_{2n+1})=0$,
		\item[$(iii)$] $\theta_3$ is a rate of convergence for both $\lim d(U(x_{2n+1}),x_{2n+1})=0$ and for $\lim d(x_{2n+2}, x_{2n+1})=0$,
		\item[$(iv)$] $\theta_4$ is a rate of convergence for $\lim d(x_{2n+1},x_{2n})=0$.
	\end{enumerate}
	where
	\begin{equation*}
		\begin{aligned}
			\theta_1(\eps)&:=\widehat{\theta}[{\rm A},{\rm V},D], \, \text{ and } \widehat{\theta} \text{ is as in Lemma~\ref{L:xu_seq_reals_qt1}(1), with parameters }\\
			&\bullet \, {\rm A}(k):=\Gamma_2(k+1), \, \text{ for all }\, k\in \N\\
			&\bullet\, {\rm V}(\eps):=\max\left\{\Gamma_3\left(\frac{\eps}{4N}\right),\Gamma_4\left(\frac{\eps}{4N}\right)\right\},\, \text{ for all } \, \eps>0\\
			&\bullet \, D:=2N - \text{which by \eqref{e:bounded} is a bound on the sequence $(d(x_{2n+2}, x_{2n}))$}\, \\
			\theta_2(\eps)&:=\max\left\{\theta_1\left(\frac{\eps}{2}\right), \Gamma_3\left(\frac{\eps}{4N}\right)+1\right\}\\
			\theta_3(\eps)&:=\max\left\{ \theta_1\left(\frac{(\gamma\eps)^2}{8N}\right), \Gamma_1\left(\frac{(\gamma\eps)^2}{2N^2}\right) \right\}\\
			\theta_4(\eps)&:=\max\left\{ \theta_1\left(\frac{\eps}{2}\right), \theta_3\left(\frac{\eps}{2}\right) \right\}.
		\end{aligned}
	\end{equation*}
\end{lemma}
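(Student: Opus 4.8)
The strategy is to reduce each of the four claims to an application of the quantitative Xu-type lemmas (Lemma~\ref{L:xu_seq_reals_qt1}), after deriving appropriate recursive inequalities of the form $s_{n+1}\leq(1-a_n)s_n+v_n$ (or with a vanishing $r_n$-term) for the relevant distances. I will treat $(i)$ as the core case and bootstrap the rest from it.

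For $(i)$, set $s_n:=d(x_{2n+2},x_{2n})$. The goal is an inequality $s_{n+1}\leq(1-\beta_{n+1})s_n+v_n$ with $\sum v_n<\infty$ and an explicit Cauchy rate. First I would compare consecutive odd-indexed terms: using the definitions, $x_{2n+3}=(1-\alpha_{n+1})T(x_{2n+2})\oplus\alpha_{n+1}u$ and $x_{2n+1}=(1-\alpha_n)T(x_{2n})\oplus\alpha_n u$, so by (W4), (W2), nonexpansivity of $T$, and the triangle inequality one gets
\begin{equation*}
d(x_{2n+3},x_{2n+1})\leq(1-\alpha_{n+1})d(x_{2n+2},x_{2n})+|\alpha_{n+1}-\alpha_n|\big(d(T(x_{2n}),u)\big)\leq d(x_{2n+2},x_{2n})+2N|\alpha_{n+1}-\alpha_n|,
\end{equation*}
using $d(T(x_{2n}),u)\leq d(T(x_{2n}),p)+d(p,u)\leq 2N$ by Lemma~\ref{l:bounded}. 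Then comparing the even-indexed terms: from $x_{2n+4}=(1-\beta_{n+1})U(x_{2n+3})\oplus\beta_{n+1}x_{2n+3}$ and $x_{2n+2}=(1-\beta_n)U(x_{2n+1})\oplus\beta_n x_{2n+1}$, again by (W4), (W2), nonexpansivity of $U$ and boundedness,
\begin{equation*}
s_{n+1}=d(x_{2n+4},x_{2n+2})\leq(1-\beta_{n+1})d(x_{2n+3},x_{2n+1})+\beta_{n+1}d(x_{2n+3},x_{2n+1})+2N|\beta_{n+1}-\beta_n|,
\end{equation*}
wait — this collapses the $\beta$ coefficients, giving $s_{n+1}\leq d(x_{2n+3},x_{2n+1})+2N|\beta_{n+1}-\beta_n|$, which combined with the previous display yields $s_{n+1}\leq s_n+2N(|\alpha_{n+1}-\alpha_n|+|\beta_{n+1}-\beta_n|)$. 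This is only enough if we use $a_n\equiv 0$, which does not give convergence. The point I am missing is that one must keep the contraction: carefully, $d(U(x_{2n+3}),U(x_{2n+1}))\leq d(x_{2n+3},x_{2n+1})$ while the $x$-coordinates also contract, so actually the right estimate is $s_{n+1}\leq d(x_{2n+3},x_{2n+1})+2N|\beta_{n+1}-\beta_n|$ with no gain at this level; the gain must come from the Halpern step, where the factor $(1-\alpha_{n+1})$ in the odd comparison does the contracting. So I will instead organize it as $s_{n+1}\leq(1-\alpha_{n+1})s_n + 2N\big(|\alpha_{n+1}-\alpha_n|+|\beta_{n+1}-\beta_n|\big)$, which is of the required shape with $a_n:=\alpha_{n+1}$, $r_n\equiv 0$, $v_n:=2N(|\alpha_{n+1}-\alpha_n|+|\beta_{n+1}-\beta_n|)$, $D:=2N$. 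The rate of divergence for $\sum a_n=\sum\alpha_{n+1}$ is ${\rm A}(k):=\Gamma_2(k+1)$; a Cauchy rate for $\sum v_n$ is ${\rm V}(\eps):=\max\{\Gamma_3(\eps/4N),\Gamma_4(\eps/4N)\}$ (splitting $\eps$ into two halves). Plugging into Lemma~\ref{L:xu_seq_reals_qt1}(1) gives exactly $\theta_1=\widehat\theta[{\rm A},{\rm V},D]$.

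For $(ii)$, from the odd-comparison inequality above, $d(x_{2n+3},x_{2n+1})\leq d(x_{2n+2},x_{2n})+2N|\alpha_{n+1}-\alpha_n|=s_n+2N|\alpha_{n+1}-\alpha_n|$; to force this below $\eps$ I require $s_n\leq\eps/2$ (i.e.\ $n\geq\theta_1(\eps/2)$) and $2N|\alpha_{n+1}-\alpha_n|\leq\eps/2$, which holds once $n\geq\Gamma_3(\eps/4N)+1$ since $\Gamma_3$ is a Cauchy rate for the partial sums (a single term is bounded by the tail). Taking the max gives $\theta_2$. For $(iii)$: by \eqref{e:equality}, $d(x_{2n+2},x_{2n+1})=(1-\beta_n)d(U(x_{2n+1}),x_{2n+1})$, so a rate for $d(U(x_{2n+1}),x_{2n+1})\to 0$ yields one for $d(x_{2n+2},x_{2n+1})\to 0$ (using $1-\beta_n\leq 1$). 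To bound $d(U(x_{2n+1}),x_{2n+1})$ I would use uniform convexity (Lemma~\ref{l:CATconvexity}): write $z:=x_{2n+1}$, note $x_{2n+2}=\frac12$-ish combination — actually $\beta_n$ may not be $\tfrac12$, so I instead argue via $d(x_{2n+3},x_{2n+1})$ and the Halpern step together with \eqref{CN} applied at the midpoint or via the standard estimate relating $d(x_{2n+2},x_{2n+1})$, $d(U(x_{2n+1}),p)$ and the convexity modulus; the condition $\gamma\leq\beta_n\leq1-\gamma$ is what converts smallness of $d(x_{2n+3},x_{2n+1})$ (bounded by $2\theta_1$-scale quantities) and of $\alpha_n$ into smallness of $d(U(x_{2n+1}),x_{2n+1})$, producing the arguments $(\gamma\eps)^2/(8N)$ and $(\gamma\eps)^2/(2N^2)$ fed to $\theta_1$ and $\Gamma_1$ respectively. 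Finally $(iv)$: by the triangle inequality $d(x_{2n+1},x_{2n})\leq d(x_{2n+1},x_{2n+2})+d(x_{2n+2},x_{2n})\leq\theta_3$-controlled $+\,\theta_1$-controlled, wait—$d(x_{2n+2},x_{2n})=s_n$ needs $n\geq\theta_1(\eps/2)$, and $d(x_{2n+1},x_{2n+2})\leq\eps/2$ needs $n\geq\theta_3(\eps/2)$; the max is $\theta_4$.

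The main obstacle is step $(iii)$: extracting a quantitative rate for $d(U(x_{2n+1}),x_{2n+1})\to0$ from the asymptotic regularity along even-then-even steps. In Hilbert space this is a short parallelogram-law computation; in $\CAT$ spaces I must instead invoke Lemma~\ref{l:CATconvexity} (the quadratic modulus $\eta(\eps)=\eps^2/8$) carefully, applied with $a:=p\in F$, $r:=N$, and the points $U(x_{2n+1})$ and $x_{2n+1}$ whose $\beta_n$-combination $x_{2n+2}$ is close to $x_{2n+1}$ (hence close to $p$-sphere behavior), together with the input from $(i)$/$(ii)$ bounding $d(x_{2n+3},x_{2n+1})$ and the input $\alpha_n\to0$. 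Keeping the dependence on $\gamma$ explicit through $\eta$ — which is why the squared, $\gamma$-scaled arguments appear — is the delicate bookkeeping. Everything else is routine (W1)--(W4) manipulation plus invoking Lemma~\ref{L:xu_seq_reals_qt1}(1).
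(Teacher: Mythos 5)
Your outline of $(i)$, $(ii)$, and $(iv)$ matches the paper's proof: the recursive inequality $s_{n+1}\leq(1-\alpha_{n+1})s_n+2N(|\alpha_{n+1}-\alpha_n|+|\beta_{n+1}-\beta_n|)$ fed into Lemma~\ref{L:xu_seq_reals_qt1}(1) gives $\theta_1$; the intermediate odd-index inequality \eqref{e:difp} together with the single-term tail bound from $\Gamma_3$ gives $\theta_2$; and the triangle inequality gives $\theta_4$. (The momentary worry that ``the $\beta$ coefficients collapse'' is a false alarm --- the contraction indeed lives in the Halpern step, which you correctly recovered.)

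The gap is in $(iii)$. Your sketch reaches for Lemma~\ref{l:CATconvexity} (uniform convexity) or for smallness of $d(x_{2n+3},x_{2n+1})$, and it remains unclear what ``the standard estimate'' is. Neither ingredient is what the argument runs on. The actual mechanism is a two-step application of \eqref{CN} with reference point $p\in F$ that produces a telescoping Fej\'er-type inequality. Applying \eqref{CN} to $x_{2n+2}=(1-\beta_n)U(x_{2n+1})\oplus\beta_n x_{2n+1}$ with $z=p$ and using nonexpansivity of $U$ at $p$ gives
\[
d^2(x_{2n+2},p)\leq d^2(x_{2n+1},p)-\beta_n(1-\beta_n)\,d^2(U(x_{2n+1}),x_{2n+1}),
\]
and applying \eqref{CN} to $x_{2n+1}=(1-\alpha_n)T(x_{2n})\oplus\alpha_n u$ with $z=p$ (discarding the negative term) gives
\[
d^2(x_{2n+1},p)\leq d^2(x_{2n},p)+\alpha_n\,d^2(u,p).
\]
Combining and rearranging,
\[
\beta_n(1-\beta_n)\,d^2(U(x_{2n+1}),x_{2n+1})\leq d^2(x_{2n},p)-d^2(x_{2n+2},p)+\alpha_n\,d^2(u,p),
\]
and the telescoping difference is bounded by
\[
d^2(x_{2n},p)-d^2(x_{2n+2},p)\leq d(x_{2n+2},x_{2n})\bigl(d(x_{2n+2},x_{2n})+2d(x_{2n+2},p)\bigr)\leq 4N\,d(x_{2n+2},x_{2n}).
\]
Now $(Q_5)$ gives $\beta_n(1-\beta_n)\geq\gamma^2$, so $\bigl(\gamma\,d(U(x_{2n+1}),x_{2n+1})\bigr)^2$ is controlled once $d(x_{2n+2},x_{2n})\leq(\gamma\eps)^2/(8N)$ (via $\theta_1$) and $\alpha_n\leq(\gamma\eps)^2/(2N^2)$ (via $\Gamma_1$), which is exactly where the arguments of $\theta_3$ come from. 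Note in particular that it is the rate for $d(x_{2n+2},x_{2n})\to 0$ that feeds in here, not the one for $d(x_{2n+3},x_{2n+1})$, so $\theta_2$ plays no role in defining $\theta_3$.
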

\begin{proof}
	Using (W2) and (W4) we have
	\SE{d(x_{2n+3},x_{2n+1})& \leq d(x_{2n+3},(1-\alpha_{n+1})T(x_{2n})\oplus\alpha_{n+1}u) +d((1-\alpha_{n+1})T(x_{2n})\oplus\alpha_{n+1}u,x_{2n+1})\\
		& \leq (1-\alpha_{n+1})d(T(x_{2n+2}),T(x_{2n}))+|\alpha_{n+1}-\alpha_{n}|d(T(x_{2n}),u)\\
		& \leq(1-\alpha_{n+1})d(x_{2n+2},x_{2n})+|\alpha_{n+1}-\alpha_{n}|d(T(x_{2n}),u).
	}
	Hence, using \eqref{e:bounded}, for all $n \in \N$
	\begin{equation}\label{e:difp}
		d(x_{2n+3},x_{2n+1})\leq (1-\alpha_{n+1}) d(x_{2n+2},x_{2n})+2N|\alpha_{n+1}-\alpha_{n}|.
	\end{equation}
	Similarly,
	\SE{d(x_{2n+4},x_{2n+2}) & \leq d(x_{2n+2},(1-\beta_{n+1})U(x_{2n+1}) \oplus \beta_{n+1}x_{2n+1})\\
		& \quad +d((1-\beta_{n+1})U(x_{2n+1}) \oplus \beta_{n+1}x_{2n+1},x_{2n+2})\\
		& \leq  (1-\beta_{n+1})d(U(x_{2n+3}),U(x_{2n+1}))+\beta_{n+1}d(x_{2n+3},x_{2n+1})\\
		& \quad +|\beta_{n+1}-\beta_{n}|d(U(x_{2n+1}),x_{2n+1})\\
		& \leq d(x_{2n+3},x_{2n+1})+|\beta_{n+1}-\beta_{n}|d(U(x_{2n+1}),x_{2n+1}).
	}
	Hence, using \eqref{e:bounded}, for all $n \in \N$
	\begin{equation}\label{e:difi}
		d(x_{2n+4},x_{2n+2})\leq d(x_{2n+3},x_{2n+1})+2N|\beta_{n+1}-\beta_{n}|.
	\end{equation}
	From \eqref{e:difp} and \eqref{e:difi} we conclude that for all $n \in \N$
	\SE{d(x_{2n+4},x_{2n+2})& \leq (1-\alpha_{n+1}) d(x_{2n+2},x_{2n})+2N\left(|\alpha_{n+1}-\alpha_{n}|+|\beta_{n+1}-\beta_{n}|\right).}
	
	By $(Q_{\ref{Q2}}),(Q_{\ref{Q3}})$ and $(Q_{\ref{Q4}})$, we may apply Lemma~\ref{L:xu_seq_reals_qt1} with $s_n=d(x_{2n+2},x_{2n})$, $a_n=\alpha_{n+1}$, $r_n\equiv 0$, and $v_n=2N\left(|\alpha_{n+1}-\alpha_{n}|+|\beta_{n+1}-\beta_{n}|\right)$  to conclude that 
	\begin{equation}\label{e:difpares}
		\lim d(x_{2n+2},x_{2n})=0, \text{ with rate of convergence $\theta_1$}.
	\end{equation}
	Indeed, for all $k\in \N$,
	\begin{equation*}
		\sum_{i=0}^{{\rm A}(k)}a_i=\sum_{i=0}^{\Gamma_2(k+1)}\alpha_{i+1}\geq \sum_{i=0}^{\Gamma_2(k+1)}\alpha_{i}-\alpha_0 \geq k
	\end{equation*}
	and, for all $\eps >0$ and $n \in\N$,
	\SE{
		\sum_{i={\rm V}(\eps)+1}^{{\rm V}(\eps)+n}v_i&=2N\left(\sum_{i={\rm V}(\eps)+1}^{{\rm V}(\eps)+n} |\alpha_{i+1}-\alpha_{i}|+\sum_{i={\rm V}(\eps)+1}^{{\rm V}(\eps)+n} |\beta_{i+1}-\beta_i|\right)\\[2mm]
		&=2N\left(\sum_{i=\Gamma_3(\frac{\eps}{4N})+j+1}^{\Gamma_3(\frac{\eps}{4N})+j+n} |\alpha_{i+1}-\alpha_{i}|+\sum_{i=\Gamma_4(\frac{\eps}{4N})+j'+1}^{\Gamma_4(\frac{\eps}{4N})+j'+n} |\beta_{i+1}-\beta_i|\right)\\[2mm]
		&\leq 2N\left(\sum_{i=\Gamma_3(\frac{\eps}{4N})+1}^{\Gamma_3(\frac{\eps}{4N})+j+n} |\alpha_{i+1}-\alpha_{i}|+\sum_{i=\Gamma_4(\frac{\eps}{4N})+1}^{\Gamma_4(\frac{\eps}{4N})+j'+n} |\beta_{i+1}-\beta_i|\right)\\
		&\leq 2N \left(\frac{\eps}{4N}+\frac{\eps}{4N}\right)= \eps.
	}
	From $(Q_{\ref{Q3}})$, for any $\eps >0$ and $n\geq \Gamma_3(\eps/4N)+1$, we have $|\alpha_{n+1}-\alpha_n|\leq \eps/4N$. Then, by \eqref{e:difp}, 
	\begin{equation}\label{e:difimpares}
		\lim d(x_{2n+3},x_{2n+1})=0, \, \text{ with rate of convergence }\, \theta_2.
	\end{equation}
	With $p\in F$ as in the hypothesis over $N$, using \eqref{CN} and the fact that $U$ is nonexpansive, we have
	\SE{d^2(x_{2n+2},p)& \leq (1-\beta_{n})d^2(U(x_{2n+1}),p)+\beta_{n}d^2(x_{2n+1},p)-\beta_n(1-\beta_n)d^2(U(x_{2n+1}),x_{2n+1})\\
		& \leq d^2(x_{2n+1},p)-\beta_n(1-\beta_n)d^2(U(x_{2n+1}),x_{2n+1})
	}
	and similarly, now using the fact that $T$ is nonexpansive,
	\SE{d^2(x_{2n+1},p)& \leq (1-\alpha_{n})d^2(T(x_{2n}),p)+\alpha_{n}d^2(u,p)-\alpha_{n}(1-\alpha_{n})d^2(T(x_{2n}),u)\\
		& \leq d^2(x_{2n},p)+\alpha_{n}d^2(u,p).
	}
	Then
	\SE{d^2(x_{2n+2},p)&\leq d^2(x_{2n},p)+\alpha_{n}d^2(u,p)-\beta_{n}(1-\beta_{n})d^2(U(x_{2n+1}),x_{2n+1}),
	}
	which implies
	\SE{\beta_{n}(1-\beta_{n})d^2(U(x_{2n+1}),x_{2n+1})&\leq  d^2(x_{2n},p)-d^2(x_{2n+2},p)+\alpha_{n}d^2(u,p)\\
		& \leq d(x_{2n+2},x_{2n})(d(x_{2n+2},x_{2n})+2d(x_{2n+2},p))+\alpha_{n}d^2(u,p).
	}
	Now, condition $(Q_{\ref{Q5}})$ entails
	\begin{equation}\label{e:ineqforUimpar}
		\left(\gamma \cdot d(U(x_{2n+1}),x_{2n+1})\right)^2\leq d(x_{2n+2},x_{2n})(d(x_{2n+2},x_{2n})+2d(x_{2n+2},p))+\alpha_{n}d^2(u,p).
	\end{equation}
	For $\eps >0$, using \eqref{e:bounded} and \eqref{e:difpares}, we conclude that for $n\geq \theta_1((\gamma\eps)^2/8N)$,
	\begin{equation*}
		d(x_{2n+2},x_{2n})(d(x_{2n+2},x_{2n})+2d(x_{2n+2},p))\leq d(x_{2n+2},x_{2n})(2N+2N)\leq \frac{(\gamma\eps)^2}{2}.
	\end{equation*}  
	Moreover, from condition $(Q_{\ref{Q1}})$, we get for $n\geq \Gamma_1((\gamma\eps)^2/2N^2)$
	\begin{equation*}
		\alpha_nd^2(u,p)\leq \alpha_nN^2\leq \frac{(\gamma\eps)^2}{2}.
	\end{equation*}
	Hence, from \eqref{e:ineqforUimpar} we conclude that
	\begin{equation}\label{e:Uimpar}
		\lim d(U(x_{2n+1}),x_{2n+1})=0, \, \text{ with rate of convergence }\, \theta_3.
	\end{equation}
	Since $d(x_{2n+2},x_{2n+1})= (1-\beta_n)d(U(x_{2n+1}),x_{2n+1})$ by \eqref{e:equality}, from \eqref{e:Uimpar} we also get that $\lim d(x_{2n+2},x_{2n+1})=0$ with rate of convergence $\theta_3$. On the other hand, since $d(x_{2n+1},x_{2n})\leq d(x_{2n+2},x_{2n+1})+d(x_{2n+2},x_{2n})$ it follows that $\lim d(x_{2n+1},x_{2n})=0$ with rate of convergence $\theta_4$.
\end{proof}
\begin{example}
	With $(\alpha_n), (\beta_n)$ as in Example~\ref{ex:1}, it is possible to verify that we have the following rates of convergence
	\begin{enumerate}
		\item[] $\eps \mapsto \left\lfloor \exp\left(\frac{12N}{\eps}+2\right)\right\rfloor +1$  for $\lim d(x_{2n+2}, x_{2n})=0$,
		\item[] $\eps \mapsto \left\lfloor \exp\left(\frac{24N}{\eps}+2\right)\right\rfloor +1$ for $\lim d(x_{2n+3}, x_{2n+1})=0$,
		\item[] $\eps \mapsto \left\lfloor \exp\left(\left(\frac{14N}{\gamma\eps}\right)^2+2\right)\right\rfloor +1$ for both $\lim d(U(x_{2n+1}), x_{2n+1})=0 \text{ and }  \lim d(x_{2n+2},x_{2n+1})=0,$
		\item[] $\eps \mapsto \left\lfloor \exp\left(\left(\frac{20N}{\gamma\eps}\right)^2+2\right)\right\rfloor +1$ for  $\lim d(x_{2n+1},x_{2n})=0$.
	\end{enumerate}
\end{example}

\begin{remark}
	Lemma~\ref{l:artheta}, and Proposition~\ref{p:asymptoticregularity} below, still hold if instead of a function $\Gamma_2$ satisfying $(Q_{\ref{Q2}})$, we have a monotone\footnote{I.e.\ $\Gamma'_2$ satisfies $\forall \eps_1, \eps_2 >0 \, \forall m_1, m_2 \in \N\, \left( \eps_1 \leq \eps_2 \land m_1\leq m_2 \to {\Gamma'_2}(m_1, \eps_2)\leq {\Gamma'_2}(m_2, \eps_1)\right).$} function $\Gamma'_2:\N\times(0,+\infty)\to \N$ such that for all $m\in \N$, $\Gamma'_2(m, \cdot)$ is a rate of convergence towards zero for the sequence $(\prod_{i=m}^{n}(1-\alpha_i))$. In this case, one defines $\theta_1$ using Lemma~\ref{L:xu_seq_reals_qt2}(1) instead of Lemma~\ref{L:xu_seq_reals_qt1}(1), namely $\theta_1=\widehat{\theta}'[{\rm A'}, {\rm V}, 2N]$, where ${\rm A'}(m,\eps):=\Gamma_2'(m+1,\eps)$, and the remaining functions are the same as before.
\end{remark}
We now compute rates of convergence for the asymptotic properties of the sequence $(x_n)$ using Lemma~\ref{l:artheta}.
\begin{proposition}\label{p:asymptoticregularity}
	Under the conditions of Lemma~\ref{l:artheta}, we have that
	\[
	\lim d(x_{n+1},x_n)=\lim d(U(x_n),x_n)=\lim d(T(x_n), x_n)=0,
	\]
	with monotone rates of convergence, respectively,
	\begin{enumerate}
		\item[$(i)$] $\rho_1(\eps):=\max\{2\theta_3(\eps)+1, 2\theta_4(\eps)\}$
		\item[$(ii)$] $\rho_2(\eps):=2\theta_3\left(\frac{\eps}{3}\right)+2$
		\item[$(iii)$] $\rho_3(\eps):=2\max\left\{ \theta_4\left(\frac{\eps}{6}\right), \Gamma_1(\frac{\eps}{4N}) \right\}+1$,
	\end{enumerate}
	where $\theta_1$, $\theta_2$, $\theta_3$ and $\theta_4$ are as in Lemma~\ref{l:artheta}.
\end{proposition}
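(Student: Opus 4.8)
The plan is to derive each of the three asymptotic regularity statements from the already-established Lemma~\ref{l:artheta}, which gives control over the even-indexed differences $d(x_{2n+2},x_{2n})$, $d(x_{2n+3},x_{2n+1})$, $d(U(x_{2n+1}),x_{2n+1})$, $d(x_{2n+2},x_{2n+1})$ and $d(x_{2n+1},x_{2n})$ with rates $\theta_1,\dots,\theta_4$. The point is simply to reassemble these "even/odd" pieces into statements about the full sequence $(x_n)$ and about the operators applied at \emph{every} index, and then to read off the resulting rates.

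For $(i)$, I would split into the two parities: $d(x_{2n+1},x_{2n})$ is handled directly by $\theta_4$, and $d(x_{2n+2},x_{2n+1})$ by $\theta_3$ (both from Lemma~\ref{l:artheta}(iii),(iv)). Given $\eps>0$, for $m\geq\rho_1(\eps)$ one checks that if $m=2n$ then $n\geq\theta_4(\eps)$, and if $m=2n+1$ then $n\geq\theta_3(\eps)$, so in either case $d(x_{m+1},x_m)\leq\eps$; the factor $2$ and the $+1$ come from converting "$n\geq\theta$" bounds into "$m\geq 2\theta$" (resp.\ $2\theta+1$) bounds, using monotonicity of the $\theta_i$. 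For $(ii)$, I would estimate $d(U(x_m),x_m)$ at an even index $m=2n$ via the triangle inequality through $x_{2n+1}$ and $U(x_{2n+1})$: $d(U(x_{2n}),x_{2n})\leq d(U(x_{2n}),U(x_{2n+1}))+d(U(x_{2n+1}),x_{2n+1})+d(x_{2n+1},x_{2n})\leq 2d(x_{2n+1},x_{2n})+d(U(x_{2n+1}),x_{2n+1})$, using nonexpansivity of $U$; each term is bounded by $\eps/3$ for $n$ large in terms of $\theta_3$ and $\theta_4$ (note $\theta_4\geq\theta_3$ by definition, so $\theta_3(\eps/3)$ dominates), and at an odd index $m=2n+1$ the term $d(U(x_{2n+1}),x_{2n+1})$ is already directly controlled by $\theta_3$. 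For $(iii)$, symmetrically, at an odd index $m=2n+1$ I would write $d(T(x_{2n+1}),x_{2n+1})\leq d(T(x_{2n+1}),T(x_{2n}))+d(T(x_{2n}),x_{2n+1})+0$, but more efficiently observe that $x_{2n+1}=(1-\alpha_n)T(x_{2n})\oplus\alpha_n u$, so by \eqref{e:equality} $d(T(x_{2n}),x_{2n+1})=\alpha_n d(T(x_{2n}),u)\leq\alpha_n\cdot 2N$ (using \eqref{e:bounded}), whence $d(T(x_{2n+1}),x_{2n+1})\leq d(x_{2n+1},x_{2n})+2N\alpha_n$; this is $\leq\eps/6+\eps/2$ — wait, it should be split so that $d(x_{2n+1},x_{2n})\leq\eps/6$ via $\theta_4(\eps/6)$ together with a coarser triangle step, and $2N\alpha_n\leq\eps/2$ via $\Gamma_1(\eps/(4N))$; at an even index $m=2n$ one instead routes through $x_{2n+1}$ using $d(T(x_{2n}),x_{2n})\leq d(T(x_{2n}),x_{2n+1})+d(x_{2n+1},x_{2n})$ with the same bounds. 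The exact bookkeeping of which $\eps$-fractions go where is what produces the displayed constants ($\eps/3$, $\eps/6$, $\eps/(4N)$) and the factors $2$, $+1$, $+2$.

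The monotonicity of $\rho_1,\rho_2,\rho_3$ follows immediately from the assumed monotonicity of $\Gamma_1$ and of the $\theta_i$ (which in turn inherit it from $\Gamma_2,\Gamma_3,\Gamma_4,\gamma,N$, since all are built by composition, $\max$, and multiplication by positive constants, operations preserving the relevant monotonicity). I do not anticipate a genuine obstacle here — the content is entirely in Lemma~\ref{l:artheta} — so the only thing requiring care is the constant-chasing: making sure each triangle-inequality decomposition uses exactly as many summands as the denominator in the $\eps$-fraction, accounting for the nonexpansivity-induced factor $2$ in the $U$- and $T$-estimates, and converting every "for $n\geq\theta(\cdot)$" into the correct "for $m\geq 2\theta(\cdot)$" or "$m\geq 2\theta(\cdot)+1$" depending on the parity of $m$. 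I would write out the even- and odd-index cases separately for each of $(i)$, $(ii)$, $(iii)$, verify the fractions sum correctly, and then take the $\max$ over the two parities to get the stated $\rho_i$.
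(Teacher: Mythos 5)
Your treatment of $(i)$ matches the paper exactly, and your handling of $(iii)$ is fine (in fact slightly tighter than the paper's: at even index the paper goes through $T(x_{2n+1})$, giving $d(T(x_{2n}),x_{2n})\leq 3d(x_{2n+1},x_{2n})+2N\alpha_n$, while your direct use of \eqref{e:equality} gives $d(T(x_{2n}),x_{2n})\leq d(x_{2n+1},x_{2n})+2N\alpha_n$; both certify $\rho_3$).

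The gap is in $(ii)$. You bound the even-index term $d(U(x_{2n}),x_{2n})$ by routing forward through $x_{2n+1}$, obtaining
\[
d(U(x_{2n}),x_{2n})\leq 2d(x_{2n+1},x_{2n})+d(U(x_{2n+1}),x_{2n+1}),
\]
which genuinely needs both $\theta_4$ (for $d(x_{2n+1},x_{2n})$) and $\theta_3$ (for $d(U(x_{2n+1}),x_{2n+1})$). You then claim that because $\theta_4\geq\theta_3$, ``$\theta_3(\eps/3)$ dominates'' --- but that is backwards: $\theta_4\geq\theta_3$ means $\theta_4(\eps/3)$ is the binding constraint, so your route yields a rate of the form $\max\{2\theta_4(\eps/3),\,2\theta_3(\eps)+1\}$, not the stated $\rho_2(\eps)=2\theta_3(\eps/3)+2$. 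Since $\theta_4(\eps)=\max\{\theta_1(\eps/2),\theta_3(\eps/2)\}$ also involves $\theta_1$, this is in general strictly worse. The paper's proof avoids $\theta_4$ entirely by bounding the even-index term \emph{backward}, i.e.\ it estimates $d(U(x_{2n+2}),x_{2n+2})$ through $x_{2n+1}$:
\[
d(U(x_{2n+2}),x_{2n+2})\leq 2d(x_{2n+2},x_{2n+1})+d(U(x_{2n+1}),x_{2n+1})\leq 3d(U(x_{2n+1}),x_{2n+1}),
\]
using the identity $d(x_{2n+2},x_{2n+1})=(1-\beta_n)d(U(x_{2n+1}),x_{2n+1})$ from \eqref{e:equality}, which is available for the step from $x_{2n+1}$ to $x_{2n+2}$ but \emph{not} for the step from $x_{2n}$ to $x_{2n+1}$ (that step is governed by $T$, not $U$). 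With this the even case needs only $n\geq\theta_3(\eps/3)$, and the claimed $\rho_2$ follows by the parity bookkeeping together with the monotonicity of $\theta_3$. You should replace your even-index estimate in $(ii)$ with this backward one.
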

\begin{proof}
	Let $\eps>0$ be given. For part $(i)$, take $n\geq \rho_1(\eps)$. If $n=2n'+1$, then $n'\geq \theta_3(\eps)$ and the result follows from Lemma~\ref{l:artheta}$(iii)$. Similarly, if $n=2n'$ we get $n'\geq \theta_4(\eps)$ and the result now follows from Lemma~\ref{l:artheta}$(iv)$. 
	For part $(ii)$, using the fact that $U$ is nonexpansive we have
	\SE{d(U(x_{2n+2}),x_{2n+2})&\leq d(U(x_{2n+2}),U(x_{2n+1}))+d(U(x_{2n+1}),x_{2n+1})+d(x_{2n+2},x_{2n+1})\\
		&\leq 2d(x_{2n+2},x_{2n+1})+d(U(x_{2n+1}),x_{2n+1})\leq 3d(U(x_{2n+1}),x_{2n+1}),
	}
	which from Lemma~\ref{l:artheta}$(iii)$ implies that $\lim d(U(x_{2n+2}),x_{2n+2})=0$ with rate of convergence $\eps\mapsto\theta_3(\eps/3)$, and consequently 
	\begin{equation*}
		\lim d(U(x_n),x_n)=0, \, \text{ with rate of convergence } \tilde{\rho}_2(\eps):=\max\left\{ 2\theta_3(\eps)+1, 2\theta_3\left(\frac{\eps}{3}\right)+2 \right\}.
	\end{equation*}
	Note that the assumption of monotonicity on the functions $\Gamma_i$ (and the definition of the function $\theta$ in Lemma~\ref{L:xu_seq_reals_qt1}), entail that the functions $\theta_i$ (and $\rho_i$) are also monotone. In particular, the monotonicity of $\theta_3$ entails that $\theta_3(\eps/3)\geq \theta_3(\eps)$, and so $\tilde{\rho}_2$ coincides with $\rho_2$.

	Finally for part $(iii)$, using (W1), the fact that $T$ is a nonexpansive map and \eqref{e:bounded}, we obtain
	\SE{d(T(x_{2n+1}),x_{2n+1})&\leq (1-\alpha_n)d(T(x_{2n+1}), T(x_{2n})) + \alpha_n d(T(x_{2n+1}), u)\\
		&\leq d(x_{2n+1},x_{2n})+2N\alpha_n
	}
	and
	\SE{d(T(x_{2n}), x_{2n})&\leq d(T(x_{2n+1}),T(x_{2n}))+d(T(x_{2n+1}), x_{2n+1}) + d(x_{2n+1},x_{2n})\\
		&\leq 2d(x_{2n+1},x_{2n})+d(T(x_{2n+1}),x_{2n+1})\\
		&\leq 3d(x_{2n+1},x_{2n}) + 2N\alpha_n.
	}
	By $(Q_{\ref{Q1}})$ and Lemma~\ref{l:artheta}$(iv)$, we conclude that
	\begin{equation*}
		\begin{aligned}
			&\eps\mapsto \max\left\{ \theta_4\left(\frac{\eps}{2}\right), \Gamma_1\left(\frac{\eps}{4N}\right) \right\} \, \text{ is a rate of convergence for }\, \lim d(T(x_{2n+1}), x_{2n+1})=0\\
			&\eps\mapsto \max\left\{ \theta_4\left(\frac{\eps}{6}\right), \Gamma_1\left(\frac{\eps}{4N}\right) \right\}\, \text{ is a rate of convergence for }\, \lim d(T(x_{2n}), x_{2n})=0.
		\end{aligned}
	\end{equation*}
	Using the monotonicity of $\theta_4$, we conclude part $(iii)$.
\end{proof}
\begin{example}
	With $(\alpha_n), (\beta_n)$ as in Example~\ref{ex:1}, it is possible to verify that for any $\eps>0$,
	\begin{enumerate}
		\item[] $n\geq \left\lfloor \exp\left(\left(\frac{20N}{\gamma\eps}\right)^2+3\right)\right\rfloor +2 \, \to\,  d(x_{n+1},x_{n})\leq \eps$,
		\item[] $n\geq \left\lfloor \exp\left(\left(\frac{14N}{\gamma\eps}\right)^2+3\right)\right\rfloor +3 \, \to\,  d(U(x_{n}),x_{n})\leq \eps$,
		\item[] $n\geq \left\lfloor \exp\left(\left(\frac{20N}{\gamma\eps}\right)^2+3\right)\right\rfloor +3 \, \to\,  d(T(x_{n}),x_{n})\leq \eps$.
	\end{enumerate}
\end{example}
We have the following trivial consequence of Proposition~\ref{p:asymptoticregularity}.
\begin{lemma}\label{l:rho}
For any $\eps>0$ define $\rho(\eps):=\max\{\rho_2(\eps),\rho_3(\eps) \}$, where $\rho_2,\rho_3$ are as in Proposition~\ref{p:asymptoticregularity}. Then $\rho$ is a monotone rate of asymptotic regularity for both $U$ and $T$, i.e.\
\begin{equation*}
\forall \eps >0 \, \forall n \geq \rho(\eps) \left(d(x_n,T(x_n), d(x_n,U(x_n))\leq \eps) \right). 
\end{equation*}
\end{lemma}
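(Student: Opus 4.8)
The plan is to read the statement off directly from Proposition~\ref{p:asymptoticregularity}. That proposition, in parts $(ii)$ and $(iii)$, already provides monotone rates of convergence $\rho_2$ for $\lim d(U(x_n),x_n)=0$ and $\rho_3$ for $\lim d(T(x_n),x_n)=0$. So the only thing to do is to combine these two rates into a single one that works for both maps simultaneously, which is exactly what taking the pointwise maximum achieves.

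Concretely, I would fix $\eps>0$ and take an arbitrary $n\geq\rho(\eps)=\max\{\rho_2(\eps),\rho_3(\eps)\}$. Then $n\geq\rho_2(\eps)$, so by Proposition~\ref{p:asymptoticregularity}$(ii)$ we have $d(U(x_n),x_n)\leq\eps$; and $n\geq\rho_3(\eps)$, so by Proposition~\ref{p:asymptoticregularity}$(iii)$ we have $d(T(x_n),x_n)\leq\eps$. Since $\eps$ and $n$ were arbitrary subject to $n\geq\rho(\eps)$, this is precisely the displayed claim that $\rho$ is a rate of asymptotic regularity for both $U$ and $T$.

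For the monotonicity assertion, I would recall that $\rho_2$ and $\rho_3$ were observed to be monotone in the proof of Proposition~\ref{p:asymptoticregularity} (this monotonicity being inherited from that of $\Gamma_1,\dots,\Gamma_4$ and of the $\theta_i$). Since the pointwise maximum of two monotone functions $(0,+\infty)\to\N$ is again monotone — if $0\leq\eps\leq\eps'$ then $\rho_i(\eps)\geq\rho_i(\eps')$ for $i=2,3$, hence $\max\{\rho_2(\eps),\rho_3(\eps)\}\geq\max\{\rho_2(\eps'),\rho_3(\eps')\}$ — the function $\rho$ is monotone, completing the argument.

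There is essentially no obstacle here: the lemma is a pure bookkeeping consequence of Proposition~\ref{p:asymptoticregularity}, and the only point requiring any care is matching the quantifier structure ($\forall\eps\,\forall n\geq\rho(\eps)$) to the two rates of convergence already established, and noting that ``$\max$'' preserves both the rate property and monotonicity.
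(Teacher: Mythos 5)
Your proposal is correct and takes essentially the same (indeed, the only reasonable) approach as the paper: the paper states Lemma~\ref{l:rho} as a trivial consequence of Proposition~\ref{p:asymptoticregularity} and gives no proof, and your argument—take the pointwise maximum of the two rates, observe that this preserves the rate property and monotonicity—is precisely the bookkeeping the authors have in mind.
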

\section{Metric Projection in {\rm CAT(0)} spaces}\label{s:Projection}

Let $S$ be a nonempty closed convex subset of a complete $\CAT$ space $X$. The \emph{metric projection} onto $S$ is the mapping $P_S: X \to S$ defined, for each $u \in X$,  as the unique point  $P_Su$ satisfying  $d(u,P_Su)= \min_{x\in S} d(u,x)$, \cite{BH(13)}. In this section, we show a useful quantitative version regarding the metric projection. The proof follows the arguments from \cite{FFLLPP(19),PP(ta)} (see also \cite{kohlenbach2011quantitative}). We point out that our analysis only requires an $\eps$-weakening of the metric projection (similarly to \cite{kohlenbach2011quantitative}), and thus the results in this section are valid even without assuming the completeness of the space.

Let $T,U:C \to C$ be nonexpansive maps on a nonempty and convex subset of a $\CAT$ space $X$ and assume that $F:=\Fix(T) \cap \Fix(U) \neq \emptyset$.  
Given $\eta >0$ and $a \in X$ we write $$F_N(a,\eta):=\{x \in C: d(x,T(x)),d(x,U(x))\leq \eta\} \cap \overline{B}_N(a).$$
\begin{proposition}\label{P:metric}
Given $u \in C$, let $N \in \N \setminus \{0\}$ be such that $N \geq 2d(u,p)$, for some $p \in F$. Then, for every $\eps >0$ and $\delta:(0,1] \to (0,1]$ there exists $\eta \in [\varphi(\eps,\delta),1]$ and $x \in F_N(p,\delta(\eta))$ such that
\begin{equation*}
\forall y \in F_N(p,\eta) \left(d^2(u,x)\leq d^2(u,y)+\eps \right),
\end{equation*}
where $\varphi(\eps,\delta):=\varphi[N](\eps,\delta):= \min \{\delta^{(i)}(1): i \leq r\}$ with $r:=r(N,\eps):=\lceil \frac{N^2}{4\eps}\rceil$.\\
Moreover, $\eta$ is witnessed by $\delta^{(i)}(1)$, for some $i \leq r$.
\end{proposition}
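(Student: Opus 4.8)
The plan is to prove Proposition~\ref{P:metric} by a "quantitative no-infinite-descent" argument in the style of \cite{FFLLPP(19),PP(ta)}, using the fact that the quantity $d^2(u,\cdot)$ is bounded below on $C$ (by $0$) and that successive witnesses to "$x$ is not approximately minimal" would force an infinite strictly decreasing chain inside $[0,N^2]$. Concretely, I would attempt to build a finite sequence $x_0,x_1,\dots$ as follows. Start with $x_0 \in F_N(p,\delta^{(0)}(1))=F_N(p,1)$, which is nonempty since $p\in F$ and $d(u,p)\le N/2\le N$ (so $p\in\overline B_N(p)$ and $d(p,T(p))=d(p,U(p))=0\le 1$). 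Given $x_i\in F_N(p,\delta^{(i)}(1))$, set $\eta_i:=\delta^{(i)}(1)$. Either $x_i$ already $\eps$-minimizes $d^2(u,\cdot)$ over $F_N(p,\eta_i)$ — in which case we stop and output $x=x_i$, $\eta=\eta_i$ — or there is some $y\in F_N(p,\eta_i)$ with $d^2(u,y)<d^2(u,x_i)-\eps$. In the latter case we would like to take $x_{i+1}:=y$; but then $x_{i+1}$ lies in $F_N(p,\eta_i)$, and to continue the recursion at the next level we need $x_{i+1}\in F_N(p,\delta^{(i+1)}(1))=F_N(p,\delta(\eta_i))$, which holds precisely because $\eta_i=\delta^{(i)}(1)$ so $\delta(\eta_i)=\delta^{(i+1)}(1)$. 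This is the point of iterating $\delta$: each step tightens the approximate-fixed-point tolerance from $\eta_i$ to $\delta(\eta_i)$, exactly matching what the next invocation of the "$\eps$-minimality or descent" dichotomy demands.

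Next I would bound the length of this descent. Along the chain, $d^2(u,x_0)\le d^2(u,p)+\big(d(x_0,p)\big)^2 +\dots$ — more simply, $d(u,x_0)\le d(u,p)+d(p,x_0)\le N/2+N$? That is too crude; instead note directly $d(u,x_i)\le d(u,p)+d(p,x_i)\le N/2 + N$, still not matching $r=\lceil N^2/4\eps\rceil$. The clean bound is $d(u,x_0)\le d(u,p)+d(p,x_0)$, and since $x_0\in\overline B_N(p)$ and $N\ge 2d(u,p)$ we get $d(u,x_0)\le N/2+N=3N/2$; but the paper's $r$ uses $N^2/4$, which suggests one should instead use that the minimum of $d^2(u,\cdot)$ over $C$ is at most $d^2(u,p)\le N^2/4$, and each descent step strictly decreases $d^2(u,x_i)$ by more than $\eps$ while staying $\ge 0$. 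Hence the chain has at most $\lceil N^2/(4\eps)\rceil = r$ strict-descent steps, so the process terminates at some $i\le r$ with an $\eps$-minimal $x=x_i\in F_N(p,\delta^{(i)}(1))$, and the witnessing $\eta=\delta^{(i)}(1)\in[\varphi(\eps,\delta),1]$ since $\varphi(\eps,\delta)=\min\{\delta^{(j)}(1):j\le r\}\le\delta^{(i)}(1)$ and $\delta^{(i)}(1)\le\delta^{(0)}(1)=1$ (using $\delta:(0,1]\to(0,1]$, so iterates stay in $(0,1]$, and one should check monotonicity of iteration is not actually needed — only that all iterates lie in $(0,1]$ and that $\varphi$ is their min). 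One subtlety: to even launch the dichotomy at step $i$ we need $F_N(p,\eta_i)\neq\emptyset$; this is automatic since $p$ itself lies in every $F_N(p,\eta)$ for $\eta>0$.

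The statement "$\eps$-minimal or there is a descent witness" is not literally a theorem but a trivial case split on whether $\inf_{y\in F_N(p,\eta_i)} d^2(u,y) \ge d^2(u,x_i)-\eps$ or not; so no heavy machinery is invoked. The main obstacle — and the place I expect the real care to be needed — is getting the bookkeeping of the indices and tolerances exactly right: ensuring that at stage $i$ the candidate $x_i$ genuinely satisfies $x_i\in F_N(p,\delta^{(i)}(1))$ (this is where the nesting $F_N(p,\eta)\supseteq F_N(p,\delta(\eta))$ and the choice of $\eta_i=\delta^{(i)}(1)$ interlock), and that the descent bound $r=\lceil N^2/4\eps\rceil$ is correct, which forces the initial value $d^2(u,x_0)$ to be controlled by $N^2/4$ rather than by a larger quantity — hence the hypothesis $N\ge 2d(u,p)$ should be used via $d^2(u,p)\le N^2/4$ and the observation that we may as well take $x_0=p$ so that $d^2(u,x_0)\le N^2/4$. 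With $x_0:=p$ the whole argument becomes clean: we never need a separate nonemptiness lemma, the descent starts from value $\le N^2/4$, and termination within $r$ steps is immediate. I would therefore open the proof by fixing $x_0:=p$, then run the finite recursion above, and close by reading off $\eta$ and $x$ together with the bound $\eta\ge\varphi(\eps,\delta)$ and $\eta=\delta^{(i)}(1)$ for the terminating index $i\le r$.
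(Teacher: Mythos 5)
Your overall plan — start at $p$, run a finite $\eps$-descent on $d^2(u,\cdot)$ bounded by $r=\lceil N^2/(4\eps)\rceil$, and read off a witnessing $\eta$ among $\delta^{(i)}(1)$, $i\le r$ — is exactly the argument in the paper (phrased there as a proof by contradiction). However, your bookkeeping of the tolerances runs in the wrong direction, and this is a genuine gap, not a cosmetic one.

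Concretely, look at your inductive step. You maintain the invariant $x_i\in F_N(p,\delta^{(i)}(1))$ with $\eta_i:=\delta^{(i)}(1)$. The descent dichotomy produces a point $y\in F_N(p,\eta_i)=F_N(p,\delta^{(i)}(1))$; you then set $x_{i+1}:=y$ and assert $x_{i+1}\in F_N(p,\delta^{(i+1)}(1))$, justifying it with ``$\delta(\eta_i)=\delta^{(i+1)}(1)$.'' That last equality is an identity about numbers, not a proof of set membership: $\delta^{(i+1)}(1)=\delta(\delta^{(i)}(1))$ may well be strictly smaller than $\delta^{(i)}(1)$, so $F_N(p,\delta^{(i+1)}(1))$ can be a strictly smaller set, and $y\in F_N(p,\delta^{(i)}(1))$ gives no reason for $y$ to lie in it. The same off-by-one error surfaces again at termination: if you stop at step $i$ and output $(x,\eta):=(x_i,\eta_i)$, the proposition requires $x\in F_N(p,\delta(\eta))=F_N(p,\delta^{(i+1)}(1))$, but your invariant only gives $x_i\in F_N(p,\delta^{(i)}(1))$.

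The fix — and what the paper actually does — is to index the tolerances in the opposite order, so that the descent moves from more iterations of $\delta$ to fewer. Set $x_0:=p\in F_N(p,\delta^{(r)}(1))$, and maintain $x_j\in F_N(p,\delta^{(r-j)}(1))$. At step $j$, put $\eta:=\delta^{(r-j-1)}(1)$, so $\delta(\eta)=\delta^{(r-j)}(1)$ and hence $x_j\in F_N(p,\delta(\eta))$; the dichotomy then either terminates (the output $(x_j,\eta)$ satisfies the proposition exactly) or hands you a descent witness $y\in F_N(p,\eta)=F_N(p,\delta^{(r-j-1)}(1))$, which is precisely the set needed for the invariant at stage $j+1$. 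After $r$ descent steps $d^2(u,x_r)<d^2(u,p)-r\eps\le N^2/4-N^2/4=0$, so the loop must terminate; all witnessing $\eta$'s are of the form $\delta^{(i)}(1)$ with $i\le r$, hence in $[\varphi(\eps,\delta),1]$. Once you flip the indexing, the rest of your argument (the choice $x_0=p$, the bound by $N^2/4$, the count $r$, and the observation that only $\delta^{(i)}(1)\in(0,1]$ for $i\le r$ is needed, not monotonicity) is all correct.
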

\begin{proof}
Suppose that for some $\eps >0$ and $\delta:(0,1] \to (0,1]$ we have
\begin{equation}\label{e:contradiction}
\forall \eta \in [\varphi(\eps,\delta),1] \, \forall x \in F_{N}(p,\delta(\eta))\, \exists y \in F_N(p,\eta) \left( d^2(u,y)< d^2(u,x) -\eps \right).
\end{equation}
We construct a finite sequence $x_0, \dots, x_{r}$ of elements of $C$ as follows. We define $x_0:=p$. In particular, $x_0 \in F_N(p,\delta^{(r)}(1))$.
Assume that, for $j \leq r-1$ we have $x_j \in F_N(p, \delta^{(r-j)}(1))$. Then, by \eqref{e:contradiction} there exists $y_0 \in F_N(p, \delta^{(r-j-1)}(1))$ such that 
\begin{equation*}
d^2(u,y_0)< d^2(u,x_j) -\eps,
\end{equation*}
and define $x_{j+1}:=y_0$.%

By construction, for all $j \leq r$ we have $d^2(u,x_{j+1})<d^2(u,x_j)-\eps$ and so 
\begin{equation*}
d^2(u,x_{r})<d^2(u,x_0)-r\eps \leq \frac{N^2}{4}- \frac{N^2}{4\eps}\eps=0, 
\end{equation*} 
which is a contradiction.
\end{proof}
The following lemmas are a generalization to $\CAT$ spaces of Lemmas~2.3 and 2.7 of \cite{kohlenbach2011quantitative}. The proofs are similar but we include them nevertheless for completeness.
\begin{lemma}\label{L:2.3K}
Let $X$ be a $\CAT$ space, $C \subseteq X$ a convex bounded subset and $b \in \N \setminus\{0\}$, a bound on the diameter of $C$. Consider $T:C \to C$ a nonexpansive map on $C$. Then 
\begin{equation*}
\forall \eps >0 \, \forall x_1,x_2 \in C \left(\bigwedge_{i=1}^{2} d(x_i,T(x_i))< \frac{\eps^2}{12b} \to \forall t \in [0,1] \left(d(w_t,T(w_t))< \eps \right)\right),
\end{equation*} 
where $w_t:= (1-t)x_1 \oplus tx_2$.
\end{lemma}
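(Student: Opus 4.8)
The inequality to prove quantifies the continuity of $x \mapsto d(x,T(x))$ along geodesic segments when the endpoints are approximate fixed points. The plan is to bound $d(w_t, T(w_t))$ by a sum of three contributions: from $x_1$ to $w_t$, the "fixed-point defect" carried by $T(w_t)$, and the comparison of $T(w_t)$ with the segment $[T(x_1),T(x_2)]$. The natural route is: first estimate $d(w_t, T(x_1))$ and $d(w_t, T(x_2))$ using (W1) together with the hypotheses $d(x_i, T(x_i))$ small; second, use nonexpansivity of $T$ and (W4) to control $d(T(w_t), (1-t)T(x_1)\oplus t\,T(x_2))$; third, combine via the triangle inequality and $\mathrm{CN}^+$ (or just (W1)) applied with base point $T(w_t)$.

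Concretely, I would set $z_t := (1-t)T(x_1) \oplus t\,T(x_2)$ and write
\[
d(w_t, T(w_t)) \leq d(w_t, z_t) + d(z_t, T(w_t)).
\]
For the second term, $d(z_t, T(w_t)) \leq (1-t)d(T(x_1),T(w_t)) + t\,d(T(x_2),T(w_t)) \leq (1-t)d(x_1,w_t) + t\,d(x_2,w_t) = 2t(1-t)d(x_1,x_2)$ by (W1), nonexpansivity, and \eqref{e:equality}. For the first term, I expect to use the $\mathrm{CN}^+$ inequality \eqref{CN} with $z := w_t$ applied to the geodesic $z_t = (1-t)T(x_1)\oplus t\,T(x_2)$, giving
\[
d^2(w_t, z_t) \leq (1-t)d^2(w_t, T(x_1)) + t\,d^2(w_t, T(x_2)) - t(1-t)d^2(T(x_1),T(x_2)),
\]
and then bound $d(w_t, T(x_i)) \leq d(w_t, x_i) + d(x_i, T(x_i))$, with $d(w_t,x_1) = t\,d(x_1,x_2)$ and $d(w_t,x_2) = (1-t)d(x_1,x_2)$ by \eqref{e:equality}, and $d(T(x_1),T(x_2)) \geq d(x_1,x_2) - d(x_1,T(x_1)) - d(x_2,T(x_2))$ is not quite what's wanted — rather I'd use $d(T(x_1),T(x_2)) \leq d(x_1,x_2)$ from the other side. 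Expanding and using the hypothesis $d(x_i,T(x_i)) < \eps^2/(12b)$ and $d(x_1,x_2) \leq b$, the cross terms and the quadratic-in-$t$ main term should telescope so that $d^2(w_t,z_t)$ is bounded by an expression of order $\eps^2$ (the negative $-t(1-t)d^2(x_1,x_2)$ term is what cancels the positive $2t(1-t)d^2(x_1,x_2)$ coming from the squared distances), leaving only the small $\eps^2/(12b)$-order remainder.

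The main obstacle will be the bookkeeping of the quadratic terms: making the positive contribution $2t(1-t)d^2(x_1,x_2)$ from expanding $(1-t)d^2(w_t,T(x_1)) + t\,d^2(w_t,T(x_2))$ cancel exactly against $-t(1-t)d^2(T(x_1),T(x_2))$ requires replacing $d^2(T(x_1),T(x_2))$ by $d^2(x_1,x_2)$, which goes the wrong direction since nonexpansivity only gives $d(T(x_1),T(x_2)) \leq d(x_1,x_2)$; so I will instead absorb $d(T(x_1),T(x_2))$ together with the approximate-fixed-point errors, estimating $d(w_t,T(x_i))$ directly rather than squaring, and probably it is cleaner to avoid $\mathrm{CN}^+$ altogether and bound $d(w_t,z_t)$ by a purely metric argument: $d(w_t, z_t) = d((1-t)x_1\oplus t x_2,\, (1-t)T(x_1)\oplus t T(x_2)) \leq (1-t)d(x_1,T(x_1)) + t\,d(x_2,T(x_2)) < \eps^2/(12b)$ directly by (W4)! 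Then $d(w_t,T(w_t)) < \eps^2/(12b) + 2t(1-t)b \leq \eps^2/(12b) + b/2$, which is not small enough, so the sharper route via $\mathrm{CN}^+$ controlling $d(z_t, T(w_t))$ more tightly (using that $T(w_t)$ lies close to the segment) is genuinely needed, and reconciling the constant $12b$ with the square-root of an $\eps^2$-order bound — i.e. tracking that $(\eps^2/(12b) + \text{stuff})^{1/2} < \eps$ — is the delicate quantitative point I'd need to get exactly right.
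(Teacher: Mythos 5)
Your proposal does not close. You decompose $d(w_t, T(w_t)) \leq d(w_t, z_t) + d(z_t, T(w_t))$ with $z_t := (1-t)T(x_1) \oplus t\,T(x_2)$. The first term is indeed $< \eps^2/(12b)$ by (W4) and the hypothesis, but the second term, as you yourself note, is only bounded by $2t(1-t)d(x_1,x_2)\leq b/2$, which is not small. You then gesture at applying $\mathrm{CN}^+$ to $z_t$ with base $w_t$, but correctly observe that nonexpansivity gives $d(T(x_1),T(x_2))\leq d(x_1,x_2)$, which goes the \emph{wrong} way for the cancellation in that expansion; and you ultimately stop without a completed argument.

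Two remarks. First, the paper takes a genuinely different route: it uses the quadratic modulus of uniform convexity of $\CAT$ spaces (Lemma~\ref{l:CATconvexity}). Setting $K:=\max\{d(x_1,w_t),d(x_1,T(w_t))\}\leq b$ (the case $K<\eps/2$ being trivial), the paper shows, using nonexpansivity and the fact that $w_t$ lies on the geodesic $[x_1,x_2]$, that the midpoint $\tfrac{1}{2}w_t\oplus\tfrac{1}{2}T(w_t)$ is at distance $> K(1-\eta(\eps/K))$ from $x_1$, which by the modulus of uniform convexity forces $d(w_t,T(w_t))<\eps$. No explicit $\mathrm{CN}^+$ expansion appears there; only the convexity modulus derived from it.

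Second, your $\mathrm{CN}^+$ instinct can in fact be turned into a correct (and arguably shorter) proof, but you must expand along the geodesic $[x_1,x_2]$ containing $w_t$, \emph{not} along $[T(x_1),T(x_2)]$. Take $z:=T(w_t)$ as the base point and apply \eqref{CN} to $w_t=(1-t)x_1\oplus t x_2$:
\begin{equation*}
d^2(T(w_t), w_t) \leq (1-t)d^2(T(w_t), x_1) + t\,d^2(T(w_t), x_2) - t(1-t)d^2(x_1,x_2).
\end{equation*}
By nonexpansivity, $d(T(w_t), x_i)\leq d(T(w_t),T(x_i))+d(T(x_i),x_i)\leq d(w_t,x_i)+\eta$ with $\eta:=\eps^2/(12b)$, and by \eqref{e:equality} one has $d(w_t,x_1)=t\delta$, $d(w_t,x_2)=(1-t)\delta$ with $\delta:=d(x_1,x_2)$. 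Expanding the squares, the coefficient of $\delta^2$ is $(1-t)t^2+t(1-t)^2-t(1-t)=0$, so the main terms cancel exactly, leaving
\begin{equation*}
d^2(T(w_t), w_t)\leq 4t(1-t)\delta\eta+\eta^2\leq b\eta+\eta^2<\frac{\eps^2}{12}+\frac{\eps^4}{144 b^2}\leq\frac{13\eps^2}{144}<\eps^2
\end{equation*}
(assuming $\eps\leq b$; otherwise the conclusion is immediate from the diameter bound). The reason your attempt fails is precisely the direction of nonexpansivity: expanding along $[T(x_1),T(x_2)]$ produces the negative term $-t(1-t)d^2(T(x_1),T(x_2))$, which cannot be replaced by $-t(1-t)d^2(x_1,x_2)$. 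Swapping roles — expanding along the segment joining the approximate fixed points — gives exactly the cancellation you need.
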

\begin{proof}
Let $x_1,x_2 \in C$ be such that 
\begin{equation}\label{e:wedge}
\bigwedge_{i=1}^{2} d(x_i,T(x_i))< \frac{\eps^2}{12b}.
\end{equation}
For any $t \in [0,1]$ we have $K:=\max\{d(x_1,w_t),d(x_1,T(w_t))\}\leq b$. If $K < \eps/2$, then $d(w_t,T(w_t))\leq d(x_1,w_t)+d(x_1,T(w_t))\leq \eps$. So, we may then assume that $K\geq\frac{\eps}{2}$. Since $T$ is nonexpansive, using \eqref{e:wedge} we derive
\SE{d\left(x_2,\frac{1}{2}w_t \oplus \frac{1}{2}T(w_t)\right)& \overset{\mathrm{(W1)}}{\leq} \frac{1}{2} d(x_2,w_t)+\frac{1}{2}d(x_2,T(w_t))\\
& \overset{\phantom{\mathrm{(W1)}}}{\leq} \frac{1}{2} d(x_2,w_t)+ \frac{1}{2} d(T(x_2),T(w_t))+\frac{1}{2} d(x_2,T(x_2))\\
&  \overset{\phantom{\mathrm{(W1)}}}{<} d(x_2,w_t)+\frac{\eps^2}{24b}\\
&  \overset{\phantom{\mathrm{(W1)}}}{\leq} d(x_2,w_t)+\frac{\eps^2}{24K}.
}
We have that 
\begin{equation}\label{e:elsecontradiction}
d\left(x_1,\frac{1}{2}w_t \oplus \frac{1}{2}T(w_t)\right)> d(x_1,w_t)-\frac{\eps^2}{24K},
\end{equation} 
otherwise we would obtain the following contradiction
\SE{d(x_1,x_2)& \leq d\left(x_1,\frac{1}{2}w_t \oplus \frac{1}{2}T(w_t)\right)+d\left(x_2,\frac{1}{2}w_t \oplus \frac{1}{2}T(w_t)\right)\\
&< d(x_1,w_t)-\frac{\eps^2}{24K}+ d(x_2,w_t)+\frac{\eps^2}{24K}\\
& = td(x_1,x_2)+(1-t)d(x_1,x_2)=d(x_1,x_2).
}
Since $d(x_1, T(w_t))< d(x_1,w_t)+\frac{\eps^2}{12K}$, from \eqref{e:elsecontradiction} we obtain 
\SE{d\left(x_1,\frac{1}{2}w_t \oplus \frac{1}{2}T(w_t)\right) > d(x_1,w_t)-\frac{\eps^2}{24K} > K-\frac{\eps^2}{12K}-\frac{\eps^2}{24K}=K\left(1-\frac{(\eps/K)^2}{8}\right)
.}
From Lemma~\ref{l:CATconvexity} with $a=x_1$, $x=w_t$, $y=T(w_t)$, $r=K$ and $\eps=\frac{\eps}{K}\in (0,2]$ we obtain 
\begin{equation*}
d(w_t,T(w_t))< K\frac{\eps}{K}=\eps.\qedhere
\end{equation*}
\end{proof}
\begin{lemma}\label{L:2.7K}
Let $X$ be a $\CAT$ space and $x,y,u \in  X, t \in [0, 1]$ and define
$w_t:= (1-t)x \oplus ty$. Then 
\begin{equation*}
\forall \eps \in (0,b^2] \left(\forall t \in [0,1] \left(d^2(u,x) \leq d^2(u,w_{t})+\frac{\eps^2}{b^2} \right)\to \langle \overrightarrow{ux},\overrightarrow{yx}\rangle \leq \eps \right),
\end{equation*}
where $b \geq d(x,y)$.
\end{lemma}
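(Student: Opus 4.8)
Lemma~\ref{L:2.7K} is the $\CAT$ analogue of a standard Hilbert-space fact: if $x$ nearly minimizes $d^2(u,\cdot)$ along the segment $[x,y]$, then the ``angle'' at $x$ between $\overrightarrow{ux}$ and $\overrightarrow{yx}$ is (almost) non-acute. The plan is to use the quantitative hypothesis at a cleverly chosen parameter $t$ together with Lemma~\ref{l:binomial} to expand $d^2(u,w_t)$ and isolate the quasi-linearization term.

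Concretely, I would argue by contradiction: suppose $\langle \overrightarrow{ux},\overrightarrow{yx}\rangle > \eps$. Apply Lemma~\ref{l:binomial} with the point $z=u$ and the combination $w_t=(1-t)x\oplus t y$, which after rewriting $\overrightarrow{xu}=-\overrightarrow{ux}$ gives
\begin{equation*}
d^2(u,w_t)\leq (1-t)^2 d^2(u,x)+2t(1-t)\langle \overrightarrow{xu},\overrightarrow{yu}\rangle + t^2 d^2(u,y).
\end{equation*}
Then I would re-express the middle inner product using the quasi-linearization identities (iii) and (iv): $\langle \overrightarrow{xu},\overrightarrow{yu}\rangle = \langle \overrightarrow{xu},\overrightarrow{yx}\rangle + \langle \overrightarrow{xu},\overrightarrow{xu}\rangle = -\langle \overrightarrow{ux},\overrightarrow{yx}\rangle + d^2(u,x)$, so that the bound becomes
\begin{equation*}
d^2(u,w_t)\leq d^2(u,x) - 2t\langle \overrightarrow{ux},\overrightarrow{yx}\rangle + t^2 d^2(x,y),
\end{equation*}
using $d^2(u,y)-d^2(u,x)+\,\cdots$ carefully (this is the routine bookkeeping step — collecting the $(1-t)^2$, $2t(1-t)$, $t^2$ coefficients and using $\langle\overrightarrow{ux},\overrightarrow{ux}\rangle=d^2(u,x)$ and $\langle\overrightarrow{ux},\overrightarrow{uy}\rangle = \frac12(d^2(u,x)+d^2(x,y)-d^2(u,y))$-type relations). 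Combined with the hypothesis $d^2(u,x)\leq d^2(u,w_t)+\eps^2/b^2$, this yields $2t\langle\overrightarrow{ux},\overrightarrow{yx}\rangle \leq t^2 d^2(x,y) + \eps^2/b^2 \leq t^2 b^2 + \eps^2/b^2$ for every $t\in[0,1]$.

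Now I would optimize over $t$. If $\langle\overrightarrow{ux},\overrightarrow{yx}\rangle>\eps$, then choosing $t:=\eps/b^2\in(0,1]$ (legitimate since $\eps\leq b^2$) gives $2\eps\cdot(\eps/b^2) < 2t\langle\overrightarrow{ux},\overrightarrow{yx}\rangle \leq (\eps/b^2)^2 b^2 + \eps^2/b^2 = 2\eps^2/b^2$, i.e.\ $2\eps^2/b^2 < 2\eps^2/b^2$, a contradiction. Hence $\langle\overrightarrow{ux},\overrightarrow{yx}\rangle\leq\eps$, as claimed.

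The only genuine obstacle is the algebraic identity converting the $\langle \overrightarrow{xu},\overrightarrow{yu}\rangle$ term (which is what Lemma~\ref{l:binomial} naturally produces) into the cleaner $-\langle \overrightarrow{ux},\overrightarrow{yx}\rangle + d^2(u,x)$ form; this is pure manipulation of the four defining axioms of the quasi-linearization function (antisymmetry in the first slot, additivity in the second slot, and $\langle\overrightarrow{xy},\overrightarrow{xy}\rangle=d^2(x,y)$), and presents no difficulty beyond care with signs. Everything else — invoking Lemma~\ref{l:binomial}, plugging in the hypothesis, and the one-line choice of $t$ — is immediate.
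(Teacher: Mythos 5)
Your proof is correct and takes essentially the same route as the paper: both reduce to the key inequality $d^2(u,w_t)\leq d^2(u,x)-2t\langle\overrightarrow{ux},\overrightarrow{yx}\rangle + t^2 d^2(x,y)$ and conclude by choosing $t=\eps/b^2\in[0,1]$. The paper obtains this bound by expanding the $(\mathrm{CN}^+)$ inequality directly and recognizing the quasi-linearization term, rather than routing through Lemma~\ref{l:binomial}, but since Lemma~\ref{l:binomial} is itself just a rewriting of $(\mathrm{CN}^+)$ this is a cosmetic difference; the only blemish is a sign slip in your parenthetical sketch of $\langle\overrightarrow{ux},\overrightarrow{uy}\rangle$ (it equals $\tfrac12(d^2(u,x)+d^2(u,y)-d^2(x,y))$), which does not affect your correct final bound.
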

\begin{proof}
We have, for any $t \in [0,1]$
\SE{d^2(u,w_t)&\leq (1-t)d^2(u,x)+td^2(u,y)-t(1-t)d^2(x,y)\\ 
& =d^2(u,x)+ 2t\left(\frac{1}{2} \left(d^2(u,y)+d^2(x,x)-d^2(u,x)-d^2(x,y) \right) \right)+t^2d^2(x,y)\\
&=d^2(u,x)+2t \langle \overrightarrow{ux},\overrightarrow{xy}\rangle+t^2d^2(x,y).
}
Assume that $d^2(u,x) \leq d^2(u,w_{t})+\frac{\eps^2}{b^2}$. Hence, for $t=\frac{\eps}{b^2} \in [0,1]$ we have
\SE{-\frac{\eps^2}{b^2}\leq 2\frac{\eps}{b^2}\langle \overrightarrow{ux},\overrightarrow{xy}\rangle +\left(\frac{\eps}{b^2}\right)^2d^2(x,y)\leq 2\frac{\eps}{b^2}\langle \overrightarrow{ux},\overrightarrow{xy}\rangle +\left(\frac{\eps}{b^2}\right)^2b^2=-2\frac{\eps}{b^2}\langle \overrightarrow{ux},\overrightarrow{yx}\rangle +\frac{\eps^2}{b^2}, }
from which we derive that $\langle \overrightarrow{ux},\overrightarrow{yx}\rangle \leq \eps$.
\end{proof}
By combining the previous results we obtain a quantitative version of the characterization of the metric projection in $\CAT$ spaces (cf. Lemma~\ref{l:charactProj}).
\begin{proposition}\label{p:innerproduct}
Given $u \in C$, let $N \in \N \setminus\{0\}$ be such that $N\geq 2 d(u,p)$ for some $p \in F$. For any
$\eps >0$ and function $\delta : (0, 1] \to (0, 1]$, there exist $\eta \in [\Phi(\eps,\delta),1]$ and $x \in F_N(p,\delta(\eta))$ such that
\begin{equation*}
\forall y \in F_N(p,\eta) \left(\langle \overrightarrow{ux},\overrightarrow{yx}\rangle \leq \eps \right), 
\end{equation*}
where $\Phi(\eps,\delta):=\Phi[N](\eps,\delta):=\frac{\varphi(\tilde{\eps}, \tilde{\delta})^2}{24N}$, with $\tilde{\eps}:=\frac{\eps^2}{4N^2}$, $\tilde{\delta}(\xi):=\min \left\{\delta\left(\frac{\xi^2}{24N}\right),\frac{\xi^2}{24N}\right\}$, for any $\xi \in (0,1]$, and $\varphi:=\varphi[N]$ is as in Proposition~\ref{P:metric}.
\end{proposition}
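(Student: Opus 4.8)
The plan is to combine Proposition~\ref{P:metric} with Lemmas~\ref{L:2.3K} and \ref{L:2.7K}, using the substitutions encoded in the statement's choice of $\tilde\eps$, $\tilde\delta$, and $\Phi$. First I would apply Proposition~\ref{P:metric} with the error parameter $\tilde\eps:=\eps^2/4N^2$ and the auxiliary modulus $\tilde\delta$ in place of $\delta$. This produces some $\eta_0 \in [\varphi(\tilde\eps,\tilde\delta),1]$, witnessed by $\tilde\delta^{(i)}(1)$ for some $i\le r(N,\tilde\eps)$, and a point $x \in F_N(p,\tilde\delta(\eta_0))$ such that $d^2(u,x)\le d^2(u,y)+\tilde\eps$ for all $y\in F_N(p,\eta_0)$. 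The point $x$ lies in $\overline B_N(p)$ and satisfies $d(x,T(x)),d(x,U(x)) < \tilde\delta(\eta_0) \le \eta_0^2/24N$ (by the definition of $\tilde\delta$), which will be exactly the smallness needed to feed the convexity lemma.

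Next I would set $\eta := \eta_0^2/24N$; by the above $\eta \ge \varphi(\tilde\eps,\tilde\delta)^2/24N = \Phi(\eps,\delta)$ (using $\eta_0 \ge \varphi(\tilde\eps,\tilde\delta)$ and monotonicity of $t\mapsto t^2/24N$), and $\eta \le 1$ since $\eta_0\le 1$ and $24N\ge 1$; also $x \in F_N(p,\delta(\eta))$ because $\tilde\delta(\eta_0)\le \delta(\eta_0^2/24N)=\delta(\eta)$. Now fix an arbitrary $y \in F_N(p,\eta)$. I want to show $\langle\overrightarrow{ux},\overrightarrow{yx}\rangle\le\eps$. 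Apply Lemma~\ref{L:2.7K} with this $x,y,u$ and with the bound $b:=2N\ge d(x,y)$ (both points are in $\overline B_N(p)$, so $d(x,y)\le 2N$) and with the error parameter $\eps':=2N\eps$, noting $\eps'\le b^2=4N^2$ reduces to $\eps\le 2N$ — which may need a separate trivial case when $\eps>2N$, since then the Cauchy--Schwarz bound \eqref{e:CS} already gives $\langle\overrightarrow{ux},\overrightarrow{yx}\rangle\le d(u,x)d(y,x)\le 2N\cdot 2N = 4N^2 \le$ something; more cleanly, one observes $\tilde\eps = (\eps')^2/b^4 \cdot b^2$... so I'd arrange the bookkeeping so that $\tilde\eps = \eps^2/4N^2$ is precisely $(\eps'/b^2)^2$ with $\eps'=2N\eps$, $b=2N$. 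Lemma~\ref{L:2.7K} then reduces the goal to verifying its hypothesis: for every $t\in[0,1]$, $d^2(u,x)\le d^2(u,w_t)+\tilde\eps$, where $w_t:=(1-t)x\oplus ty$.

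To check that hypothesis, I would use Lemma~\ref{L:2.3K}: since $x$ and $y$ both satisfy $d(\cdot,T(\cdot))$ and $d(\cdot,U(\cdot))$ bounded by $\eta = \eta_0^2/24N$, and with diameter bound $b=2N$ (so that $\eta < \eta_0^2/(12\cdot 2N)$ holds, as $24N\ge 12b$ fails — careful: $12b=24N$, so $\eta_0^2/24N$ is exactly the borderline; I would instead use a strict inequality coming from the strict inequalities $d(\cdot,T(\cdot))<\tilde\delta(\eta_0)$ and the strict version in Lemma~\ref{L:2.3K}), Lemma~\ref{L:2.3K} applied to each of $T$ and $U$ gives $d(w_t,T(w_t))<\eta_0$ and $d(w_t,U(w_t))<\eta_0$ for all $t\in[0,1]$. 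Hence $w_t\in F_N(p,\eta_0)$ (it lies in $\overline B_N(p)$ by convexity of the ball, (W1)). Then the defining property of $x$ from Proposition~\ref{P:metric} applies with $y:=w_t$, giving $d^2(u,x)\le d^2(u,w_t)+\tilde\eps$, which is precisely the hypothesis of Lemma~\ref{L:2.7K}. Applying that lemma yields $\langle\overrightarrow{ux},\overrightarrow{yx}\rangle\le \eps'/b \cdot$ — more precisely Lemma~\ref{L:2.7K} with error $\eps'$ gives $\langle\overrightarrow{ux},\overrightarrow{yx}\rangle\le\eps'$; so I should recalibrate and take $\eps':=\eps$ directly, with $b:=2N$ and the weakening parameter being $\tilde\eps$ chosen so that $\tilde\eps = \eps^2/b^2 = \eps^2/4N^2$, which matches the statement exactly. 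That closes the argument.

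The main obstacle will be the borderline constant matching: Lemma~\ref{L:2.3K} requires smallness below $\eps^2/12b = \eta_0^2/24N$ while $\tilde\delta(\eta_0)$ is defined to be $\le \eta_0^2/24N$, so the inequality is non-strict at the threshold; this is handled by tracking that Proposition~\ref{P:metric} delivers $x\in F_N(p,\tilde\delta(\eta_0))$ with strict inequalities $d(x,T(x))<\tilde\delta(\eta_0)$ (inherited from the strict inequalities built into the construction in that proof) and that Lemma~\ref{L:2.3K}'s conclusion is itself strict, so no room is lost. The second delicate point is making sure $w_t$ lands in the ball $\overline B_N(p)$ so it is a legitimate test point in $F_N(p,\eta_0)$; this is immediate from (W1) since $d(w_t,p)\le (1-t)d(x,p)+td(y,p)\le N$. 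Everything else is routine substitution of the prescribed $\tilde\eps$, $\tilde\delta$, and the resulting $\Phi$.
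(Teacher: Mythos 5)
Your proof follows exactly the paper's strategy: apply Proposition~\ref{P:metric} with the modified parameters $\tilde\eps := \eps^2/4N^2$ and $\tilde\delta$, use Lemma~\ref{L:2.3K} (with $b=2N$, restricted to $C\cap\overline B_N(p)$) to place every chord point $w_t$ in $F_N(p,\eta_0)$, and then invoke Lemma~\ref{L:2.7K} with $b=2N$ to convert the resulting $\tilde\eps$-minimality of $x$ into the inner-product bound, after disposing of the large-$\eps$ case via Cauchy--Schwarz. One small caveat: your proposed fix for the $\le$ vs.\ $<$ borderline in Lemma~\ref{L:2.3K} — relying on strict inequalities being inherited by $x$ from the construction in Proposition~\ref{P:metric} — does not help for the arbitrary test point $y\in F_N(p,\eta)$, for which one only has $d(y,T(y))\le\eta_0^2/24N$ by the (non-strict) definition of $F_N$; the clean remedy (also left implicit in the paper) is that Lemma~\ref{L:2.3K} remains valid with non-strict inequalities in both hypothesis and conclusion, which is all that membership in $F_N(p,\eta_0)$ requires.
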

\begin{proof}
Let $\eps >0$ and $\delta : (0, 1] \to (0, 1]$ be given. We may assume that $\eps \leq 4N^2$, otherwise the result follows easily using the Cauchy-Schwarz inequality. By Proposition~\ref{P:metric}, there exist $\eta_0 \in[\varphi(\tilde{\eps},\tilde{\delta}),1]$ and $x \in F_N(p,\tilde{\delta}(\eta_0))$ such that 
\begin{equation}\label{e:quadraticdiff}
\forall y \in F_N(p,\eta_0) \left(d^2(u,x)\leq d^2(u,y)+\tilde{\eps} \right).
\end{equation}
Define $\eta_1:=\frac{\eta_0^2}{24N} \in [\Phi(\eps,\delta),1]$. Since $x  \in F_N(p,\eta_1)$, for any $y \in F_N(p,\eta_1)$,  by Lemma~\ref{L:2.3K} (applied to $T$ and $U$ restricted to $C \cap \overline{B}_N(p)$ and $b=2N$)
\begin{equation*}
\forall t \in [0,1] \left(w_t \in F_N(p, \eta_0) \right),
\end{equation*}
with $w_t:=(1-t)x \oplus ty$. Hence, by \eqref{e:quadraticdiff}
\begin{equation*}
\forall t \in [0,1] \left(d^2(u,x)\leq d^2(u,w_t)+ \frac{\eps^2}{4N^2} \right)
\end{equation*}
and the result follows from Lemma~\ref{L:2.7K} (with $b=2N$) and the fact that $d(x,T(x)),d(x,U(x))\leq \delta(\eta_1)$.
\end{proof}
We now apply, in the setting of $\CAT$ spaces, a technique which corresponds to the quantitative removal of sequential weak compactness in Hilbert spaces -- see \cite{FFLLPP(19)}. 
\begin{proposition}\label{p:removalswc}
Given $u \in C$, let $N \in \N \setminus\{0\}$ be such that $N\geq \max\{d(x_0,p),2 d(u,p)\}$ for some $p \in F$. Let $\rho$ be a monotone rate of asymptotic regularity for both $U$ and $T$. For any
$\eps >0$ and function $\Delta : \N \to (0, 1]$, there exist $n\leq \Psi[N,\rho](\eps,\Delta)$ and $x \in F_N(p,\Delta(n))$ such that
\begin{equation*}
\forall m \geq n \left(\langle \overrightarrow{ux},\overrightarrow{x_mx}\rangle \leq \eps \right), 
\end{equation*}
where $\Psi[N,\rho](\eps,\Delta):=\rho(\Phi(\eps,\widebar{\Delta}))$, with $\widebar{\Delta}(\eta):=\Delta(\rho (\eta))$ for all $\eta \in (0,1]$, and $\Phi:=\Phi[N]$ is as in Proposition~\ref{p:innerproduct}.
\end{proposition}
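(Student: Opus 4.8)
The goal is Proposition~\ref{p:removalswc}, which promotes the ``for all $y$ in a fixed ball'' statement of Proposition~\ref{p:innerproduct} to a ``for all tail elements $x_m$'' statement, by feeding the asymptotic regularity rate $\rho$ into the $\delta$-parameter.

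\textbf{Plan of proof.} The plan is to apply Proposition~\ref{p:innerproduct} with a carefully chosen function $\delta$ built from $\Delta$ and $\rho$, so that the $\eps$-weakened minimizer $x$ it produces lies in $F_N(p,\Delta(n))$ for the relevant index $n$, and so that \emph{every} sufficiently late iterate $x_m$ belongs to the set $F_N(p,\eta)$ over which the inner-product inequality is asserted. First I would set $\widebar{\Delta}(\eta):=\Delta(\rho(\eta))$ and invoke Proposition~\ref{p:innerproduct} with this $\widebar{\Delta}$ in place of $\delta$; this yields some $\eta \in [\Phi(\eps,\widebar{\Delta}),1]$ and a point $x \in F_N(p,\widebar{\Delta}(\eta)) = F_N(p,\Delta(\rho(\eta)))$ with $\langle \overrightarrow{ux},\overrightarrow{yx}\rangle \leq \eps$ for all $y \in F_N(p,\eta)$. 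Then I would set $n:=\rho(\eta)$; since $\Phi(\eps,\widebar{\Delta}) \leq \eta \leq 1$ and $\rho$ is monotone (decreasing in $\eps$), $n = \rho(\eta) \leq \rho(\Phi(\eps,\widebar{\Delta})) = \Psi[N,\rho](\eps,\Delta)$, giving the required bound on $n$, and by construction $x \in F_N(p,\Delta(n))$.

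\textbf{Verifying the tail membership.} The remaining step is to check that for every $m \geq n = \rho(\eta)$ one has $x_m \in F_N(p,\eta)$, so that the conclusion of Proposition~\ref{p:innerproduct} applies to $y = x_m$. Since $\rho$ is a rate of asymptotic regularity for both $T$ and $U$ (Lemma~\ref{l:rho}), for $m \geq \rho(\eta)$ we get $d(x_m,T(x_m)) \leq \eta$ and $d(x_m,U(x_m)) \leq \eta$; and by Lemma~\ref{l:bounded} (using $N \geq \max\{d(x_0,p),d(u,p)\}$, which is implied by the present hypothesis $N \geq \max\{d(x_0,p),2d(u,p)\}$) we have $d(x_m,p) \leq N$, so $x_m \in \overline{B}_N(p)$. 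Hence $x_m \in F_N(p,\eta)$, and Proposition~\ref{p:innerproduct} gives $\langle \overrightarrow{ux},\overrightarrow{x_mx}\rangle \leq \eps$ for all such $m$, which is exactly the desired conclusion.

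\textbf{Main obstacle.} There is no deep difficulty here — the work was already done in Propositions~\ref{P:metric} and \ref{p:innerproduct}. The one point requiring care is the bookkeeping of monotonicity directions: $\rho$ is monotone in the sense that it is \emph{antitone} in $\eps$ (as defined for functions $(0,+\infty)\to\N$ in the paper), so one must be sure that $\Phi(\eps,\widebar{\Delta}) \leq \eta$ yields $\rho(\eta) \leq \rho(\Phi(\eps,\widebar{\Delta}))$ and not the reverse; and one must double-check that $\widebar{\Delta}$ indeed maps $(0,1]$ into $(0,1]$ so that Proposition~\ref{p:innerproduct} is applicable (it does, since $\Delta$ is $(0,1]$-valued). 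I would also note in passing, as the surrounding text does, that completeness of $X$ is not needed, since only $\eps$-weakenings of the projection are used.
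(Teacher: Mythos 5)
Your argument is correct and coincides with the paper's own proof: both invoke Proposition~\ref{p:innerproduct} with $\widebar{\Delta}(\eta):=\Delta(\rho(\eta))$, set $n:=\rho(\eta_0)$, use the monotonicity of $\rho$ to bound $n\leq\Psi$, and observe via Lemma~\ref{l:bounded} and the asymptotic-regularity rate that $x_m\in F_N(p,\eta_0)$ for all $m\geq n$. Your explicit care about the antitone direction of $\rho$ is exactly the point the paper glosses as ``using the monotonicity of $\rho$.''
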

\begin{proof}
Let $\eps >0$ and $\Delta : \N \to (0, 1]$ be given. By Proposition~\ref{p:innerproduct}, there exist $\eta_0 \in[\Phi(\eps,\widebar{\Delta}),1]$ and $x \in F_N(p,\widebar{\Delta}(\eta_0))$ such that 
\begin{equation}\label{e:innerprod}
\forall y \in F_N(p,\eta_0) \left(\langle \overrightarrow{ux},\overrightarrow{yx}\rangle \leq \eps \right). 
\end{equation}
Let $n:= \rho(\eta_0)$. Observe that $d(x,T(x)),d(x,U(x))\leq \widebar{\Delta}(\eta_0)=\Delta(n)$ and using the monotonicity of $\rho$, we have $n\leq  \Psi[N,\rho](\eps,\Delta)$. Since $d(x_n,p) \leq N$ by Lemma~\ref{l:bounded}, and $\forall m \geq n \left(d(x_n,T(x_n)),d(x_n,U(x_n))\leq \eta_0 \right)$, the result follows from \eqref{e:innerprod}. 
\end{proof}
\section{Metastability}\label{s:convergence}
In this section we prove metastability results pertaining to the iteration $(x_n)$ and extract some consequences of that property. We also consider an iteration \eqref{e:MannHalpernerrors} which is a generalization of the iteration \eqref{e:MannHalpern} with error terms. We show that rates of metastability for a sequence generated by \eqref{e:MannHalpernerrors} follow from the metastability of $(x_n)$.
\begin{theorem}\label{t:main_meta}
	Let $X$ be a $\CAT$ space and $C \subseteq X$ a nonempty convex subset. Consider nonexpansive mappings $T,U:C \to C$ such that  $F:=\Fix(T) \cap \Fix(U) \neq \emptyset$. Let $(\alpha_n),(\beta_n) \subset [0,1]$ and $u,x_0 \in C$.  Assume that we have monotone functions $\Gamma_1, \Gamma_2, \Gamma_3, \Gamma_4$ and a constant $\gamma\in (0,1)$  satisfying $(Q_{\ref{Q1}})-(Q_{\ref{Q5}})$, respectively. Let $N\in\N\setminus\{0\}$ be a natural number such that $N\geq \max\{d(x_0,p),2 d(u,p)\}$, for some $p\in F$.
	
	Then $(x_n)$ generated by \eqref{e:MannHalpern} is a Cauchy sequence with rate of metastability
	\begin{equation*}
		\mu(\eps,f):=2\max\left\{\Sigma\left(\Psi\left(\frac{\tilde{\eps}}{24}, \Delta_{\eps,f}\right)\right), \theta_4\left(\frac{\eps}{4}\right) \right\} +1,
	\end{equation*} 
	where $\mu:=\mu[N,\gamma,\Gamma_1, \Gamma_2, \Gamma_3, \Gamma_4]$ and
	\begin{enumerate}
		\item[] $\tilde{\eps}:=\frac{\eps^2}{16}$,
		\item[] $\Delta_{\eps,f}(n):=\min \left\{\frac{\tilde{\eps}}{30N(P(n)+1)},1\right\}$,
		\item[] $P(n):=f\left(2\max\left\{\Sigma(n), \theta_4\left(\frac{\eps}{4}\right) \right\} +1\right)$,
		\item[] $\Sigma(n):=\sigma\left[\Gamma_2, 4N^2\right]\left(\tilde{\eps}, K(n)\right),\, \text{ with }\, \sigma\, \text{ as defined in Lemma~\ref{L:xu_seq_reals_qt3}}$,
		\item[] $K(n):=\max\left\{n, \rho_3\left(\frac{\tilde{\eps}}{36N}\right)\right\}$,
		\item[] $\theta_4$ as in Lemma~\ref{l:artheta}, $\rho_3$ as in Proposition~\ref{p:asymptoticregularity},
		\item[] $\Psi=\Psi[N,\rho]$ as in Proposition~\ref{p:removalswc}, with $\rho$ as in Lemma~\ref{l:rho}.
	\end{enumerate}
\end{theorem}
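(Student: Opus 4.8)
The plan is to verify the metastability property directly: given $\eps>0$ and $f:\N\to\N$, produce $n\leq\mu(\eps,f)$ with $d(x_i,x_j)\leq\eps$ for all $i,j\in[n,f(n)]$. By the triangle inequality it suffices to control $d(x_m,x_n)$ for all $m$ in the relevant window against a single well-chosen reference term, and by asymptotic regularity (Lemma~\ref{l:artheta}, in particular $\theta_4$ to pass between even and odd indices) we may reduce everything to even indices. So the core is to bound $d^2(x_{2m},x_{2n})$ for $n$ in a suitable place and all $m\in[n,f(\cdot)]$, using a Fej\'er-type estimate combined with the quantitative projection information.

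**Key steps.** First I would run the standard computation expressing $d^2(x_{2m+2},q)$ in terms of $d^2(x_{2m},q)$ for a point $q$ near $\proj_F(u)$: combining \eqref{CN} applied at the $U$-step and at the $T$-step (as in the proof of Lemma~\ref{l:artheta}) with Lemma~\ref{l:binomial} to bring in the quasi-linearization term $\langle\overrightarrow{uq},\overrightarrow{x_{2m+1}q}\rangle$, one gets a recursion of the shape $s_{m+1}\leq(1-a_m)s_m+a_m r_m+\mathcal E_m$ with $s_m=d^2(x_{2m},q)$, $a_m\asymp\alpha_m$, $r_m$ governed by the inner product $\langle\overrightarrow{uq},\overrightarrow{x_{2m+1}q}\rangle$ (plus asymptotic-regularity error converting $x_{2m+1}$ to $x_{2m}$, handled via $\rho_3$), and $\mathcal E_m$ an $\ell^1$-type error from $|\alpha_{m+1}-\alpha_m|,|\beta_{m+1}-\beta_m|$. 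Second, I would invoke Proposition~\ref{p:removalswc} with the right choice of $\Delta=\Delta_{\eps,f}$ to obtain $n\leq\Psi(\tilde\eps/24,\Delta_{\eps,f})$ and a point $x\in F_N(p,\Delta(n))$ — a near-common-fixed-point playing the role of $q$ — such that $\langle\overrightarrow{ux},\overrightarrow{x_m x}\rangle\leq\tilde\eps/24$ for all $m\geq n$; this makes $r_m$ small on the whole tail. Third, with $r_m$ controlled on the window $[K(n),P(n)]$ and $\mathcal E_m$ controlled because $\Delta_{\eps,f}(n)$ carries the factor $1/(P(n)+1)$, I would apply Lemma~\ref{L:xu_seq_reals_qt3} (whence $\Sigma(n)=\sigma[\Gamma_2,4N^2](\tilde\eps,K(n))$) to conclude $s_m=d^2(x_{2m},x)\leq\tilde\eps$ for all $m\in[\Sigma(n),P(n)]$, where $K(n)=\max\{n,\rho_3(\tilde\eps/36N)\}$ ensures we are past the point where the asymptotic-regularity correction in $r_m$ is negligible. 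Fourth, translating back: for $m,m'$ in that window $d(x_{2m},x_{2m'})\leq 2\sqrt{\tilde\eps}=\eps/2$, and then $\theta_4(\eps/4)$ absorbs the odd indices and the triangle inequality, yielding the stated $\mu(\eps,f)$ after unwinding $P$, $\Sigma$ and the outer $2\max\{\cdot,\cdot\}+1$.

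**Main obstacle.** The delicate point is the bookkeeping of which point is ``$q$'': one cannot use $\proj_F(u)$ itself (it need not be reachable constructively), so the reference point $x$ depends on $n$, which in turn depends — through $\Psi$ and $\Delta_{\eps,f}$ — on $f$ and on $P$, and $P$ depends on $\Sigma(n)$; ensuring this circle of definitions is well-founded and that the final $n$ genuinely satisfies $n\leq\mu(\eps,f)$ requires the monotonicity assumptions on $\Gamma_1,\dots,\Gamma_4$ (hence on $\theta_i$, $\rho_i$, $\sigma$) and careful tracking of where each $\eps$-fraction ($\tilde\eps=\eps^2/16$, the $/24$, $/30N$, $/36N$ splits) is consumed. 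A secondary technical nuisance is that the recursion is naturally in $d^2$ while the target is in $d$, and that the quasi-linearization term must be split as $\langle\overrightarrow{ux},\overrightarrow{x_{2m}x}\rangle$ plus an error $\leq d(u,x)\,d(x_{2m},x_{2m+1})$ controlled by Cauchy--Schwarz \eqref{e:CS} and the rate $\rho_3$ — this is exactly why $\rho_3(\tilde\eps/36N)$ appears inside $K(n)$. Once these dependencies are laid out in the right order, each individual estimate is a routine application of the lemmas already established.
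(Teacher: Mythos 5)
Your architecture matches the paper's almost exactly: invoke Proposition~\ref{p:removalswc} with $\Delta=\Delta_{\eps,f}$ to produce $n_0\leq\Psi(\tilde\eps/24,\Delta_{\eps,f})$ and a near-common-fixed-point $\tilde{x}\in F_N(p,\Delta_{\eps,f}(n_0))$ against which everything is measured; derive a Xu-type recursion for $s_n=d^2(x_{2n},\tilde x)$ with constant error $\mathcal{E}$; apply Lemma~\ref{L:xu_seq_reals_qt3} on the window $[K(n_0),P(n_0)]$; and finally use $\theta_4$ plus the triangle inequality to absorb odd indices. That is the paper's proof.

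Two of your bookkeeping claims, however, are off and would, if carried through literally, produce a different $\mu$ than stated. First, you say the error $\mathcal{E}$ in the recursion is ``an $\ell^1$-type error from $|\alpha_{m+1}-\alpha_m|,|\beta_{m+1}-\beta_m|$''. That is not where it comes from here: those $\ell^1$-differences are consumed earlier, inside Lemma~\ref{l:artheta}, when constructing the asymptotic-regularity rates $\theta_i,\rho_i$. In Theorem~\ref{t:main_meta} the constant $\mathcal{E}=10N\Delta_{\eps,f}(n_0)$ arises solely from the fact that $\tilde{x}$ is only a $\Delta_{\eps,f}(n_0)$-approximate common fixed point (so $d(U(\tilde x),\tilde x)$ and $d(T(\tilde x),\tilde x)$ are small but nonzero, contributing cross terms when squaring the fixed-point estimates). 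Your own later remark that ``$\mathcal{E}_m$ is controlled because $\Delta_{\eps,f}(n)$ carries the factor $1/(P(n)+1)$'' is the correct statement and contradicts the earlier attribution. Second, you say the quasi-linearization term to be split is $\langle\overrightarrow{uq},\overrightarrow{x_{2m+1}q}\rangle$ with correction error $d(u,x)\,d(x_{2m},x_{2m+1})$ controlled via $\rho_3$. Applying Lemma~\ref{l:binomial} to the $T$-step $x_{2m+1}=(1-\alpha_m)T(x_{2m})\oplus\alpha_m u$ actually yields the term $\langle\overrightarrow{T(x_{2m})\tilde x},\overrightarrow{u\tilde x}\rangle$, which the paper splits as $\langle\overrightarrow{T(x_{2m})x_{2m}},\overrightarrow{u\tilde x}\rangle+\langle\overrightarrow{x_{2m}\tilde x},\overrightarrow{u\tilde x}\rangle$; Cauchy--Schwarz then gives an error $\leq d(u,\tilde x)\,d(T(x_{2m}),x_{2m})$, and it is the rate of $d(T(x_n),x_n)\to0$ — namely $\rho_3$ — that controls it. If you instead split off $d(x_{2m},x_{2m+1})$ as you wrote, the relevant rate would be $\theta_4$ (or $\rho_1$), not $\rho_3$, and the $K(n)$ in the statement would not match. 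These are repairable slips, but they change the precise constants and which rate function appears, so they need to be corrected to land on the claimed $\mu$.
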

\begin{proof}
	Let $\eps>0$ and a function $f:\N\to\N$ be given. By Proposition~\ref{p:removalswc} we may consider $n_0\leq \Psi(\tilde{\eps}/24, \Delta_{\eps,f})$ and $\tilde{x}\in F_N(p, \Delta_{\eps,f}(n_0))$ such that
	\begin{equation*}
		\forall n \geq n_0 \left(\langle \overrightarrow{u\tilde{x}},\overrightarrow{x_n\tilde{x}}\rangle \leq \frac{\tilde{\eps}}{24} \right). 
	\end{equation*}
	Using (W1) and the nonexpansivity of $U$, we have for all $n\in \N$
	\begin{equation*}
		d(x_{2n+2}, \tilde{x})\leq (1-\beta_{n})d(U(x_{2n+1}),\tilde{x}) + \beta_{n}d(x_{2n+1},\tilde{x})\leq d(x_{2n+1},\tilde{x}) + d(U(\tilde{x}), \tilde{x}).
	\end{equation*}
	So, using Lemma~\ref{l:binomial}, we have
	\begin{align}\label{e:dquadrado}
		d^2(x_{2n+2}, \tilde{x})&\leq \left(d(x_{2n+1}, \tilde{x})+d(U(\tilde{x}), \tilde{x})\right)^2 \nonumber\\
		&=d^2(x_{2n+1}, \tilde{x}) + d(U(\tilde{x}), \tilde{x})\left(d(U(\tilde{x}), \tilde{x}) + 2d(x_{2n+1}, \tilde{x})\right) \nonumber\\
		&\leq (1-\alpha_n)^2d^2(T(x_{2n}), \tilde{x})+2\alpha_n(1-\alpha_n)\langle \overrightarrow{T(x_{2n})\tilde{x}},\overrightarrow{u\tilde{x}}\rangle + \alpha_n^2d^2(u, \tilde{x}) + 5N\Delta_{\eps,f}(n_0) \nonumber\\
		&\leq (1-\alpha_n)d^2(x_{2n}, \tilde{x})+\alpha_n\left(2\langle \overrightarrow{u\tilde{x}}, \overrightarrow{T(x_{2n})\tilde{x}}\rangle + 3N^2\alpha_n\right) + 10N\Delta_{\eps,f}(n_0),
	\end{align}
	since $2d^2(u, \tilde{x})\leq (d(u,p)+d(\tilde{x},p))^2\leq (3N/2)^2\leq 3N^2$ and
	\SE{
		d^2(T(x_{2n}),\tilde{x})&\leq d^2(T(x_{2n}),T(\tilde{x}))+d(T(\tilde{x}),\tilde{x})\left(d(T(\tilde{x}),\tilde{x})+ 2d(T(x_{2n}),T(\tilde{x})) \right)\\
		&\leq d^2(x_{2n},\tilde{x})+ \Delta_{\eps,f}(n_0)(1+2\cdot2N) \leq d^2(x_{2n},\tilde{x})+ 5N\Delta_{\eps,f}(n_0).
	}
	Thus, the inequality \eqref{e:dquadrado} can we written in the compact form
	\[
	s_{n+1}\leq (1-\alpha_n)s_n+ \alpha_nr_n+ \mathcal{E},
	\]
	where $s_n:=d^2(x_{2n}, \tilde{x})$, $r_n:=2\langle \overrightarrow{u\tilde{x}}, \overrightarrow{T(x_{2n})\tilde{x}}\rangle + 3N^2\alpha_n$ and $\mathcal{E}:=10N\Delta_{\eps,f}(n_0)$.  Using Cauchy-Schwarz, for $n\geq \max\{n_0, \rho_3(\frac{\tilde{\eps}}{36N})\}$, we have
	\begin{align*}
		2\langle \overrightarrow{u\tilde{x}}, \overrightarrow{T(x_{2n})\tilde{x}}\rangle &=2\langle \overrightarrow{u\tilde{x}}, \overrightarrow{T(x_{2n})x_{2n}}\rangle +2\langle \overrightarrow{u\tilde{x}}, \overrightarrow{x_{2n}\tilde{x}}\rangle\\
		&\leq 2d(u, \tilde{x})d(T(x_{2n}), x_{2n}) + 2\langle \overrightarrow{u\tilde{x}}, \overrightarrow{x_{2n}\tilde{x}}\rangle\\
		&\leq 3N\frac{\tilde{\eps}}{36N} + \frac{2\tilde{\eps}}{24}=\frac{\tilde{\eps}}{6}.
	\end{align*}
	
	By the definition of $\rho_3$, note that $\max\{n_0, \rho_3(\frac{\tilde{\eps}}{36N})\}\geq \Gamma_1(\frac{\tilde{\eps}}{18N^2})$ and so
	\[
	3N^2\alpha_n\leq \frac{\tilde{\eps}}{6}.
	\]
	
	Hence, for all $n\geq K(n_0)$, $r_n\leq \frac{\tilde{\eps}}{3}$. Since
	\[
	\mathcal{E}\leq \frac{10N\tilde{\eps}}{30N(P(n_0)+1)}=\frac{\tilde{\eps}}{3(P(n_0)+1)},
	\]
	we can apply Lemma~\ref{L:xu_seq_reals_qt3} to conclude
	\begin{equation}\label{e:ConclusionXu}
		\forall n\in [\Sigma(n_0), P(n_0)] \left( d(x_{2n}, \tilde{x})\leq \frac{\eps}{4}\right).
	\end{equation}
	Since $d(x_{2n+1}, \tilde{x})\leq d(x_{2n+1}, x_{2n})+d(x_{2n}, \tilde{x})$, with $n_1:= \max\{\Sigma(n_0), \theta_4(\eps/4)\}$ we obtain
	\[
	\forall n\in [n_1, P(n_0)] \left( d(x_{2n}, \tilde{x}),d(x_{2n+1}, \tilde{x})\leq \frac{\eps}{2}\right).
	\]
	The latter entails that for $n\in [2n_1+1, P(n_0)]$, $d(x_n, \tilde{x})\leq \frac{\eps}{2}$.
	One easily sees that the function $\Sigma$ is monotone, and therefore $2n_1+1\leq \mu(\eps, f)$.
	Finally, noticing that $P(n_0)= f(2n_1+1)$, by the triangle inequality we conclude that $\mu$ is a rate of metastability for $(x_n)$.
\end{proof}
\begin{remark}\label{R:tomain}
	\begin{enumerate}
		\item	If $\Gamma_2$ is replaced by a monotone function $\Gamma_2':\N\times (0,+\infty)\to\N$ satisfying
		\[
		\forall \eps >0 \, \forall m\in \N\, \left( \prod_{i=m}^{\Gamma_2'(m,\eps)}(1-\alpha_i)\leq \eps\right),
		\]
		then one can compute a rate of metastability as before but applying Lemma~\ref{L:xu_seq_reals_qt4} instead of Lemma~\ref{L:xu_seq_reals_qt3}. Indeed the only difference would be in the definition of the number function $\Sigma$, which now would use $\sigma'$ instead of $\sigma$,
		\[
		\Sigma(n):=\sigma'\left[\Gamma_2', 4N^2\right]\left(\tilde{\eps}, K(n)\right).
		\]
		\item	The function $\rho_3$ was only required as a rate of convergence towards zero for $(d(T(x_{2n}),x_{2n}))$. It is then clear that $\rho_3$ could be replaced by $\eps\mapsto \max\left\{ \theta_4\left(\frac{\eps}{6}\right), \Gamma_1\left(\frac{\eps}{4N}\right) \right\}$ (cf.\ the end of the proof of Proposition~\ref{p:asymptoticregularity}). 
	\end{enumerate}
\end{remark}
As a corollary to the proof of Theorem~\ref{t:main_meta}, we can derive quantitative information on the Halpern iteration. Previous rates of metastability were obtained by Kohlenbach and Leu\c{s}tean in \cite{KL(12)} using different methods. Their results follow from a quantitative analysis of a proof by Saejung~\cite{Saejung(10)} and rely on a technique to eliminate the use of Banach limits needed in the original proof. 
\begin{corollary}\label{C:Saejungmeta}
	Let $T:C \to C$ be a nonexpansive mapping, $(\alpha_n)\subset [0,1]$ and $y_0,u \in C$. Let $(y_n)$ be a sequence generated by the Halpern schema \eqref{e:Halpern}
	\begin{equation*}
		y_{n+1}:=(1-\alpha_n)T(y_{n}) \oplus \alpha_nu.
	\end{equation*}	
	Assume that we have  monotone functions $\Gamma_1, \Gamma_2, \Gamma_3$ satisfying $(Q_{\ref{Q1}})-(Q_{\ref{Q3}})$, respectively. Let $N\in\N\setminus\{0\}$ be a natural number such that $N\geq \max\{d(y_0,p),2 d(u,p)\}$, for some $p\in \Fix(T)$.
	Then 
	\begin{enumerate}
		\item[$(i)$] $(y_n)$ is asymptotically regular, and we have the following rates of asymptotic regularity
		\begin{enumerate}
			\item[] $\lim d(y_{n+1}, y_n)=0$ with rate of convergence $\widehat{\theta}[{\rm A},{\rm V},D]$,
			\item[] $\lim d(T(y_n),y_n)=0$ with rate of convergence $\tilde{\rho}(\eps):=\max\left\{\widehat{\theta}[{\rm A},{\rm V},D](\frac{\eps}{2}),\Gamma_1\left(\frac{\eps}{3N}\right) \right\}$,
		\end{enumerate}
		where $\widehat{\theta}$ is as in Lemma~\ref{L:xu_seq_reals_qt1}(1), with parameters ${\rm A}(k):=\Gamma_2(k+1)$, ${\rm V}(\eps):=\Gamma_3\left(\frac{2\eps}{3N}\right)$, and $D:=2N$.
		\item[$(ii)$] $(y_n)$ is a Cauchy sequence with rate of metastability
		\begin{equation*}
			\zeta(\eps,f):=\widetilde{\Sigma}\left(\Psi\left(\frac{\tilde{\eps}}{24}, \widetilde{\Delta}_{\eps,f}\right)\right),
		\end{equation*}
		where \begin{enumerate}
			\item[] $\tilde{\eps}:=\frac{\eps^2}{4}$,
			\item[] $\widetilde{\Delta}_{\eps,f}:=\min \left\{\dfrac{\tilde{\eps}}{30N\left(f\left(\widetilde{\Sigma}(n) \right)+1\right)},1\right\}$,
			\item[] $\widetilde{\Sigma}(n):=\sigma\left[\Gamma_3, 4N^2\right]\left(\tilde{\eps}, \widetilde{K}(n)\right),\, \text{ with }\, \sigma\, \text{ as defined in Lemma~\ref{L:xu_seq_reals_qt3}}$,
			\item[] $\widetilde{K}(n):=\max\left\{n, \tilde{\rho}\left(\frac{\tilde{\eps}}{36N}\right)\right\}$,
			\item[] $\Psi=\Psi[N,\tilde{\rho}]\, \text{ as in Proposition~\ref{p:removalswc}}$.
		\end{enumerate}
	\end{enumerate}
\end{corollary}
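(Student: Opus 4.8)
The plan is to specialize the proof of Theorem~\ref{t:main_meta} to the case $U=\Id_C$. Since $W(a,a,\lambda)=a$ (by (W1) with $z=a$), the even step of \eqref{e:MannHalpern} collapses to $x_{2n+2}=x_{2n+1}$, so $(\beta_n)$ plays no role, no passage to a subsequence is needed, and only one of the two error contributions survives; up to the shift $y_k:=x_{2k-1}$ for $k\ge1$, $y_0:=x_0$, the iteration \eqref{e:MannHalpern} is exactly the Halpern iteration \eqref{e:Halpern}, so it suffices to re-run the argument directly for $(y_n)$. First, as in Lemma~\ref{l:bounded} (induction via (W1), $T(p)=p$, $d(y_0,p)\le N$ and $d(u,p)\le N/2$), one gets $d(y_n,p)\le N$ for all $n$; hence $2N$ bounds $(d(y_{n+1},y_n))$ and $4N^2$ bounds $(d^2(y_n,\tilde x))$ for any $\tilde x\in\overline{B}_N(p)$.

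For part $(i)$, from (W2), (W4) and nonexpansivity of $T$,
\[
d(y_{n+2},y_{n+1})\le(1-\alpha_{n+1})d(y_{n+1},y_n)+|\alpha_{n+1}-\alpha_n|\,d(T(y_n),u)\le(1-\alpha_{n+1})d(y_{n+1},y_n)+\tfrac{3N}{2}|\alpha_{n+1}-\alpha_n|,
\]
which has the form required by Lemma~\ref{L:xu_seq_reals_qt1}(1) with $s_n=d(y_{n+1},y_n)$, $a_n=\alpha_{n+1}$, $v_n=\tfrac{3N}{2}|\alpha_{n+1}-\alpha_n|$ and $D=2N$; here ${\rm A}(k):=\Gamma_2(k+1)$ is a rate of divergence for $(\sum a_n)$ and ${\rm V}(\eps):=\Gamma_3(\tfrac{2\eps}{3N})$ a Cauchy rate for $(\sum v_n)$, yielding $\widehat\theta[{\rm A},{\rm V},2N]$. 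Next, \eqref{e:equality} gives $d(T(y_n),y_{n+1})=\alpha_n\,d(T(y_n),u)\le\tfrac{3N}{2}\alpha_n$, hence $d(T(y_n),y_n)\le d(y_{n+1},y_n)+\tfrac{3N}{2}\alpha_n$, and $(Q_{\ref{Q1}})$ produces the stated $\tilde\rho$. Since all $\Gamma_i$ are monotone, so are $\widehat\theta$ and $\tilde\rho$, and therefore $\tilde\rho$ is admissible as the input $\rho$ of Proposition~\ref{p:removalswc} (it is trivially also a rate of asymptotic regularity for $U=\Id_C$).

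For part $(ii)$, fix $\eps>0$ and $f:\N\to\N$ and apply Proposition~\ref{p:removalswc} with $\rho=\tilde\rho$, parameter $\tfrac{\tilde\eps}{24}$ and the function $\widetilde\Delta_{\eps,f}$, obtaining $n_0\le\Psi[N,\tilde\rho](\tfrac{\tilde\eps}{24},\widetilde\Delta_{\eps,f})$ and $\tilde x\in F_N(p,\widetilde\Delta_{\eps,f}(n_0))$ with $\langle\overrightarrow{u\tilde x},\overrightarrow{y_n\tilde x}\rangle\le\tfrac{\tilde\eps}{24}$ for all $n\ge n_0$. Using Lemma~\ref{l:binomial} with $x=T(y_n)$, $y=u$, $z=\tilde x$, $t=\alpha_n$, together with $(1-\alpha_n)^2\le1-\alpha_n$, the estimate $d^2(T(y_n),\tilde x)\le d^2(y_n,\tilde x)+5N\widetilde\Delta_{\eps,f}(n_0)$, and the identity $-2\langle\overrightarrow{T(y_n)\tilde x},\overrightarrow{u\tilde x}\rangle+d^2(u,\tilde x)=d^2(T(y_n),u)-d^2(T(y_n),\tilde x)\le3N^2$, one gets, exactly as in Theorem~\ref{t:main_meta},
\[
d^2(y_{n+1},\tilde x)\le(1-\alpha_n)d^2(y_n,\tilde x)+\alpha_n\bigl(2\langle\overrightarrow{u\tilde x},\overrightarrow{T(y_n)\tilde x}\rangle+3N^2\alpha_n\bigr)+5N\widetilde\Delta_{\eps,f}(n_0).
\]
Splitting $\langle\overrightarrow{u\tilde x},\overrightarrow{T(y_n)\tilde x}\rangle=\langle\overrightarrow{u\tilde x},\overrightarrow{T(y_n)y_n}\rangle+\langle\overrightarrow{u\tilde x},\overrightarrow{y_n\tilde x}\rangle$ and using \eqref{e:CS}, $d(u,\tilde x)\le\tfrac{3N}{2}$, the rate $\tilde\rho$ and $(Q_{\ref{Q1}})$, one checks that $r_n:=2\langle\overrightarrow{u\tilde x},\overrightarrow{T(y_n)\tilde x}\rangle+3N^2\alpha_n\le\tfrac{\tilde\eps}{3}$ for all $n\ge\widetilde K(n_0)$, while $5N\widetilde\Delta_{\eps,f}(n_0)\le\tfrac{\tilde\eps}{3(f(\widetilde\Sigma(n_0))+1)}$. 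Lemma~\ref{L:xu_seq_reals_qt3} (with $s_n=d^2(y_n,\tilde x)$, $D=4N^2$, $a_n=\alpha_n$ and rate of divergence $\Gamma_2$, $K=\widetilde K(n_0)$, $P=f(\widetilde\Sigma(n_0))$, and $\eps$ replaced by $\tilde\eps$) then gives $d^2(y_n,\tilde x)\le\tilde\eps=\tfrac{\eps^2}{4}$ for $n\in[\widetilde\Sigma(n_0),f(\widetilde\Sigma(n_0))]$; by the triangle inequality $d(y_i,y_j)\le\eps$ on that interval, and $\widetilde\Sigma(n_0)\le\widetilde\Sigma\bigl(\Psi(\tfrac{\tilde\eps}{24},\widetilde\Delta_{\eps,f})\bigr)=\zeta(\eps,f)$ by monotonicity of $\widetilde\Sigma$, so $\zeta$ is a rate of metastability for $(y_n)$.

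The only delicate part is bookkeeping: tracking the constants so that every estimate closes with exactly the denominators appearing in the statement (the $3N^2$, the $5N$ versus the $10N$ of Theorem~\ref{t:main_meta}, the $\tfrac{\tilde\eps}{36N}$, the $4N^2$, the $\tfrac{2\eps}{3N}$), and verifying monotonicity of $\widehat\theta$, $\tilde\rho$ and $\widetilde\Sigma$ so that Proposition~\ref{p:removalswc} applies and the bound $n_0\le\Psi$ propagates through $\widetilde\Sigma$. The $U=\Id_C$ reduction introduces no new idea and in fact shortens the proof of Theorem~\ref{t:main_meta}.
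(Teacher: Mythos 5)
Your proof is correct and takes essentially the same route as the paper: part $(i)$ is the same simplification of Lemma~\ref{l:artheta}, and for part $(ii)$ you re-derive the key recurrence directly for $(y_n)$ and apply Proposition~\ref{p:removalswc} and Lemma~\ref{L:xu_seq_reals_qt3}, whereas the paper first identifies $(y_n)=(x_{2n})$ for the \eqref{e:MannHalpern} iteration with parameters $(\alpha_n,T,1/2,\Id_C,y_0,u)$ and then invokes the proof of Theorem~\ref{t:main_meta} via Remark~\ref{R:tomain}(2) -- a purely cosmetic difference. Two small remarks: your use of the quasi-linearization identity $-2\langle\overrightarrow{T(y_n)\tilde x},\overrightarrow{u\tilde x}\rangle+d^2(u,\tilde x)=d^2(T(y_n),u)-d^2(T(y_n),\tilde x)$ is a cleaner (and more watertight) justification of the inequality than the paper's stated ``since $2d^2(u,\tilde x)\le 3N^2$''; and you correctly feed $\Gamma_2$ (the rate of divergence) into $\sigma$, where the statement of the corollary writes $\Gamma_3$ -- this is evidently a typo in the statement, since the paper's own proof routes through Theorem~\ref{t:main_meta}, whose $\Sigma$ uses $\Gamma_2$.
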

\begin{proof}	
	To prove part $(i)$, we simplify the arguments in the proof of Lemma~\ref{l:artheta}. As before, one easily proves by induction that $d(y_n,p)\leq N$, for all $n\in\N$. Using (W2) and (W4) one shows that for all $n \in \N$
	\begin{equation*}
		d(y_{n+2},y_{n+1})\leq (1-\alpha_{n+1})d(y_{n+1},y_{n})+\frac{3N}{2}|\alpha_{n+1}-\alpha_n|.
	\end{equation*}
	By an application of Lemma~\ref{L:xu_seq_reals_qt1}(1) we have that $\lim d(y_{n+1},y_{n})=0$ with rate of convergence $\widehat{\theta}[A,V,D]$. Since, using (W1) and the assumption on $N$, for all $n \in \N$ 
	\begin{equation*}
		d(T(y_{n}),y_{n})\leq \frac{3N}{2}\alpha_n+d(y_{n+1},y_{n}),
	\end{equation*}
	we see that $\tilde{\rho}$ is a rate of convergence for $\lim d(T(y_n),y_n)=0$, and conclude the proof of part $(i)$.

	We now argue the metastability property of $(y_n)$.	First consider $(x_n)$ generated by \eqref{e:MannHalpern} with parameters $(\alpha_n, T,\beta_n, U, x_0, u):=(\alpha_n, T, 1/2, \Id_C, y_0, u)$. By an easy induction one sees that $y_{n}=x_{2n}$, for all $n \in \N$. In light of Remark~\ref{R:tomain}(2) and the fact that $\tilde{\rho}$ is a rate of convergence towards zero for $(d(T(y_n),y_n))\equiv (d(T(x_{2n}),x_{2n}))$, we can follow the proof of Theorem~\ref{t:main_meta} (with $\tilde{\eps}:=\eps^2/4$) until \eqref{e:ConclusionXu}, to conclude that
	\[
	\forall n\in \left[\widetilde{\Sigma}(n_0), f\left(\widetilde{\Sigma}(n_0)\right)\right] \left( d(y_{n}, \tilde{x})\leq \frac{\eps}{2}\right).
	\]
	The result then follows using the triangular inequality.
\end{proof}
The next results take into account the presence of error terms in the inductive construction of the iteration \eqref{e:MannHalpern}. We start with an easy lemma which is well-known (see e.g.\ \cite[Proposition~7]{LLPP(21)}). We include its short proof for the sake of completeness.
\begin{lemma}\label{l:Cauchymetric}
	Consider two sequences $(w_n)$ and $(z_n)$ in a metric space. If $(w_n)$ is a Cauchy sequence with rate of metastability $\tau$ and $\lim d(w_n,z_n)=0$ with rate of convergence $\nu$, then $(z_n)$ is also a Cauchy sequence with rate of metastability 
	\begin{equation*}
		\tau_{\nu}(\eps,f):=\max\left\{\tau\left(\frac{\eps}{3},f_{\eps,\nu}\right), \nu\left(\frac{\eps}{3}\right)\right\},      
	\end{equation*}
	where $f_{\eps,\nu}:=f(\max\{n,\nu(\eps/3)\})$, for all $n \in \N$.
\end{lemma}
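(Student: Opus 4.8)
The plan is to show that an arbitrary metastability challenge for $(z_n)$ — a pair $(\eps,f)$ with $f:\N\to\N$ — can be answered by reducing it to a metastability challenge for $(w_n)$ together with the rate of convergence $\nu$. First I would observe that for any $n$ with $n\geq \nu(\eps/3)$ we have $d(w_n,z_n)\leq \eps/3$, and the same holds for any index $\geq n$; so on a block of indices all lying beyond $\nu(\eps/3)$, the sequences $(z_n)$ and $(w_n)$ differ pointwise by at most $\eps/3$. Hence if we can find a block $[n, F(n)]$ on which $(w_n)$ is $\eps/3$-constant and which additionally lies entirely beyond $\nu(\eps/3)$, then for $i,j$ in that block the triangle inequality gives
\[
d(z_i,z_j)\leq d(z_i,w_i)+d(w_i,w_j)+d(w_j,z_j)\leq \frac{\eps}{3}+\frac{\eps}{3}+\frac{\eps}{3}=\eps .
\]

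The technical point is to arrange that the good block lies beyond $\nu(\eps/3)$. The standard trick is to feed $(w_n)$ the \emph{modified} counterfunction $g(n):=f(\max\{n,\nu(\eps/3)\})$ (this is exactly the $f_{\eps,\nu}$ in the statement) at precision $\eps/3$. Applying the metastability of $(w_n)$ to $(\eps/3,g)$ produces $n\leq \tau(\eps/3,g)$ with $d(w_i,w_j)\leq \eps/3$ for all $i,j\in[n,g(n)]$. Now set $n^{\*}:=\max\{n,\nu(\eps/3)\}$. Since $g$ is (after the usual monotonization) monotone and $g(n^{\*})=f(n^{\*})$, and since $[n^{\*},f(n^{\*})]\subseteq[n,g(n)]$ by monotonicity, the block $[n^{\*},f(n^{\*})]$ is simultaneously a block of $\eps/3$-constancy for $(w_n)$ and lies entirely beyond $\nu(\eps/3)$. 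Therefore the displayed triangle-inequality estimate applies on $[n^{\*},f(n^{\*})]$, giving $d(z_i,z_j)\leq\eps$ there. Finally $n^{\*}=\max\{n,\nu(\eps/3)\}\leq\max\{\tau(\eps/3,g),\nu(\eps/3)\}=\tau_\nu(\eps,f)$, which is exactly the claimed bound.

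I would write this up in a few lines: fix $(\eps,f)$; recall $\forall m\geq\nu(\eps/3)\,(d(w_m,z_m)\leq\eps/3)$; invoke the metastability of $(w_n)$ at $(\eps/3,f_{\eps,\nu})$ to get $n$; put $n^{\*}:=\max\{n,\nu(\eps/3)\}$; check the inclusion of intervals and the bound $n^{\*}\leq\tau_\nu(\eps,f)$; then estimate $d(z_i,z_j)$ for $i,j\in[n^{\*},f(n^{\*})]$ via the triangle inequality. The only mild subtlety — and the one place where care is needed — is the monotonicity bookkeeping: one must make sure $f$ (hence $f_{\eps,\nu}$) may be assumed monotone so that the interval inclusion $[n^{\*},f(n^{\*})]\subseteq[n,f_{\eps,\nu}(n)]$ holds; this is legitimate by the $f\mapsto f^{\max}$ remark recorded earlier in the preliminaries. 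Everything else is a routine three-way split of $\eps$.
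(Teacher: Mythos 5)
Your approach is the same as the paper's: reduce the metastability challenge for $(z_n)$ to one for $(w_n)$ with the shifted counterfunction $f_{\eps,\nu}$, take $n^* := \max\{n_0, \nu(\eps/3)\}$, and run the three-way triangle inequality on $[n^*, f(n^*)]$. The one place you go astray is the ``monotonicity bookkeeping'' you flag as the subtle step. You do not need $f$ (or $g=f_{\eps,\nu}$) to be monotone, and invoking monotonicity here is in fact the wrong move: if $g$ is monotone and $n^*\geq n_0$, monotonicity gives $g(n_0)\leq g(n^*)=f(n^*)$, which is the \emph{opposite} of what the interval inclusion $[n^*,f(n^*)]\subseteq[n_0,g(n_0)]$ requires (namely $f(n^*)\leq g(n_0)$). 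What actually makes the inclusion work is the exact identity
\begin{equation*}
f_{\eps,\nu}(n_0)=f\bigl(\max\{n_0,\nu(\eps/3)\}\bigr)=f(n^*),
\end{equation*}
which holds by the very definition of $f_{\eps,\nu}$ with no hypothesis on $f$. Hence $[n^*,f(n^*)]=[n^*,f_{\eps,\nu}(n_0)]\subseteq[n_0,f_{\eps,\nu}(n_0)]$ simply because $n^*\geq n_0$, and the rest of your estimate goes through verbatim. This is exactly how the paper argues. So: correct conclusion and correct structure, but the justification for the interval inclusion should be the definitional equality, not monotonicity; the $f\mapsto f^{\max}$ device is a red herring here and would, if actually needed, push the inequality in the unusable direction.
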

\begin{proof}
	Let $\eps >0$ and $f:\N \to \N$ be given. By the assumption on $\tau$, consider $n_0 \leq \tau (\eps/3,f_{\eps,\nu})$ such that $d(w_i,w_j)\leq \eps/3$, for all $i,j \in [n_0,f_{\eps,\nu}(n_0)]$. Define $n:=\max\{n_0,\nu(\eps/3)\} \leq \tau_{\nu}(\eps,f)$. Then, for all $i,j \in [n,f_{\eps,\nu}(n_0)]=[n,f(n)]$ we have 
	\SE{
		d(z_i,z_j) \leq d(z_i,w_i)+d(w_i,w_j)+d(w_j,z_j)\leq \eps,
	}
	using the assumption on $\nu$ and the fact that $n \geq n_0$.
\end{proof}
\begin{theorem}\label{t:quantresults1}
	Let $T,U:C \to C$ be nonexpansive mappings and $(\alpha_n),(\beta_n)\subset [0,1]$. Let $(x_n)$ be generated by \eqref{e:MannHalpern} and assume that $(Q_{\ref{Q2}})$ holds with rate of divergence $\Gamma_2$. Consider a sequence $(x'_n)$ satisfying $x'_0=x_0$ and
	\begin{equation}\label{e:MannHalpernerrors}\tag{HM$_e$}
		\begin{cases}
			d(x'_{2n+1},(1-\alpha_n)T(x'_{2n})\oplus \alpha_n u)&\leq \delta_{2n}\\
			d(x'_{2n+2},(1-\beta_n)U(x'_{2n+1})\oplus\beta_nx'_{2n+1})&\leq \delta_{2n+1}, 
		\end{cases}
	\end{equation}
	where $(\delta_n) \subset [0,+\infty)$.
	If  $\sum \delta_{n} <\infty$, or $\lim \frac{\delta_{2n}+\delta_{2n+1}}{\alpha_n}=0$ (in the case $\alpha_n>0$), then $\lim d(x'_n,x_n)=0$. Moreover,
	\begin{enumerate}[$(1)$]
		\item If $\chi_1$ is a Cauchy rate for $\left(\sum \delta_{n} \right)$, then $\widehat{\nu}$ is a rate of convergence for $\lim d(x'_n,x_n)=0$, where
		\begin{equation*}
			\widehat{\nu}(\eps):=2\max\left\{\theta\left(\frac{\eps}{2}\right),\chi_1\left(\frac{\eps}{2}\right)\right\}+3,
		\end{equation*}
		with $\theta:=\widehat{\theta}[\Gamma_2,\chi_1,\lceil \sum_{i=0}^{\chi_1(1)}\delta_i\rceil +1]$, where $\widehat{\theta}$ is as in Lemma~\ref{L:xu_seq_reals_qt1}(1).
		\item If $\chi_2$ is a rate of convergence for $\lim \frac{\delta_{2n}+\delta_{2n+1}}{\alpha_n}=0$, then $\widecheck{\nu}$ is a rate of convergence  for $\lim d(x'_n,x_n)=0$, where
		\begin{equation*}
			\widecheck{\nu}(\eps):=2\max\left\{\theta\left(\frac{\eps}{2}\right),\chi_2\left(\frac{\eps}{2}\right)\right\}+1,
		\end{equation*}
		with $\theta:=\widecheck{\theta}\left[\Gamma_2,\chi_2,\left\lceil\max_{i \leq \chi_2(1)} \left\{\frac{\delta_{2i}+\delta_{2i+1}}{\alpha_i},1\right\}\right\rceil \right]$, where $\widecheck{\theta}$ is as in Lemma~\ref{L:xu_seq_reals_qt1}(2).
	\end{enumerate}
\end{theorem}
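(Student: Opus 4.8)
The plan is to track the running distances $e_n := d(x'_n,x_n)$ (note $e_0=0$ since $x'_0=x_0$), reduce everything to a single Xu-type recursion for the even-indexed subsequence, and then invoke Lemma~\ref{L:xu_seq_reals_qt1}. First I would establish two one-step estimates. Using the triangle inequality, the defining inequalities of \eqref{e:MannHalpernerrors}, property (W4), and the nonexpansivity of $T$, one gets
\[
e_{2n+1}\le d\bigl(x'_{2n+1},(1-\alpha_n)T(x'_{2n})\oplus\alpha_n u\bigr)+d\bigl((1-\alpha_n)T(x'_{2n})\oplus\alpha_n u,(1-\alpha_n)T(x_{2n})\oplus\alpha_n u\bigr)\le\delta_{2n}+(1-\alpha_n)e_{2n},
\]
and similarly, now with the nonexpansivity of $U$,
\[
e_{2n+2}\le\delta_{2n+1}+(1-\beta_n)d(U(x'_{2n+1}),U(x_{2n+1}))+\beta_n e_{2n+1}\le\delta_{2n+1}+e_{2n+1}.
\]
Setting $s_n:=e_{2n}$, these combine to $s_0=0$ and $s_{n+1}\le(1-\alpha_n)s_n+(\delta_{2n}+\delta_{2n+1})$ for all $n$, which is exactly the shape required by Lemma~\ref{L:xu_seq_reals_qt1} with $a_n=\alpha_n$ (whose rate of divergence is $\Gamma_2$, by $(Q_{\ref{Q2}})$).

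For part $(1)$ I would read the above as the $r_n\equiv 0$ case: if $\chi_1$ is a Cauchy rate for $\left(\sum\delta_n\right)$, then $\chi_1$ is also a Cauchy rate for $\left(\sum(\delta_{2n}+\delta_{2n+1})\right)$ (since $\sum_{i=m+1}^{n}(\delta_{2i}+\delta_{2i+1})=\sum_{j=2m+2}^{2n+1}\delta_j$ and $2m+1\ge m$), and from $s_0=0$ one gets $s_n\le\sum_{i=0}^{\chi_1(1)}\delta_i+1$, so $D:=\lceil\sum_{i=0}^{\chi_1(1)}\delta_i\rceil+1$ bounds $(s_n)$. Lemma~\ref{L:xu_seq_reals_qt1}(1) then gives $\lim s_n=0$ with rate $\theta=\widehat{\theta}[\Gamma_2,\chi_1,D]$. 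To cover the odd indices I would use $e_{2n+1}\le e_{2n}+\delta_{2n}$ together with the fact that $\delta_{2n}\le\eps/2$ whenever $2n>\chi_1(\eps/2)$; a short parity case distinction with the $\eps/2,\eps/2$ split yields $\widehat{\nu}(\eps)=2\max\{\theta(\eps/2),\chi_1(\eps/2)\}+3$ as a rate of convergence for $\lim d(x'_n,x_n)=0$.

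For part $(2)$ I would instead read the recursion as the $v_n\equiv 0$ case: writing $r_n:=\frac{\delta_{2n}+\delta_{2n+1}}{\alpha_n}$ we have $s_{n+1}\le(1-\alpha_n)s_n+\alpha_n r_n$, and since this forces $s_{n+1}\le\max\{s_n,r_n\}$, an induction shows $(s_n)$ is bounded by $D:=\lceil\max_{i\le\chi_2(1)}\{r_i,1\}\rceil$, where $\chi_2$ is the given rate of convergence for $\lim r_n=0$. Lemma~\ref{L:xu_seq_reals_qt1}(2) gives $\lim s_n=0$ with rate $\theta=\widecheck{\theta}[\Gamma_2,\chi_2,D]$; the odd indices are handled via $e_{2n+1}\le e_{2n}+\delta_{2n}$ and $\delta_{2n}\le\delta_{2n}+\delta_{2n+1}=\alpha_n r_n\le r_n\le\eps$ for $n\ge\chi_2(\eps)$, producing $\widecheck{\nu}(\eps)=2\max\{\theta(\eps/2),\chi_2(\eps/2)\}+1$. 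The qualitative claim $\lim d(x'_n,x_n)=0$ is a special case of either computation (or follows directly from Lemma~\ref{L:Xu}).

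The calculations involved are routine and I expect no genuine obstacle; the three points that need a little care are the reduction to the even subsequence (which turns the two-line perturbed recursion into a single clean Xu-type inequality), the choice of the bound $D$ on $(s_n)$ in each case (exploiting that the recursion is an average of $s_n$ with the error term, so $(s_n)$ never exceeds the relevant maximum of error quantities, which is what produces the two explicit $D$'s in the statement), and the parity bookkeeping at the end needed to absorb the one-step odd-index estimate into the final rates $\widehat{\nu}$, $\widecheck{\nu}$. Note that only $(Q_{\ref{Q2}})$ is used here — precisely what the Xu lemmas consume — so none of the other hypotheses of Theorem~\ref{t:main} play a role.
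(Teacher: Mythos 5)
Your proposal is correct and follows essentially the same route as the paper: the same two one-step estimates via (W4) and nonexpansivity, the same reduction to the single Xu-type recursion for $s_n=d(x'_{2n},x_{2n})$, the same choice of $D$ in each case, the same invocation of Lemma~\ref{L:xu_seq_reals_qt1}(1) resp.\ (2), and the same parity bookkeeping via $e_{2n+1}\le e_{2n}+\delta_{2n}$ to absorb the odd indices.
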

\begin{proof}
	By the nonexpansivity of $T$ and (W4), we have for all $n\in \N$
	\begin{equation}\label{e:dx'toximpar}
		\begin{split}
			d(x'_{2n+1}, x_{2n+1})& \leq d(x'_{2n+1},(1-\alpha_{n})T(x'_{2n})\oplus \alpha_{n} u) \\
			&\qquad + d((1-\alpha_{n})T(x'_{2n})\oplus \alpha_{n} u,x_{2n+1})\\
			& \leq (1-\alpha_{n})d(x'_{2n},x_{2n})+\delta_{2n}.
		\end{split}
	\end{equation}
	Similarly
	\begin{equation}\label{e:dx'toxpar}
		\begin{split}
			d(x'_{2n+2},x_{2n+2})&\leq d(x'_{2n+2},(1-\beta_{n})U(x'_{2n+1})\oplus \beta_{n} x'_{2n+1}) \\
			&\qquad + d((1-\beta_{n})U(x'_{2n+1})\oplus \beta_{n} x'_{2n+},x_{2n+2})\\
			& \leq d(x'_{2n+1}, x_{2n+1})+ \delta_{2n+1}\\
			& \leq (1-\alpha_{n})d(x'_{2n},x_{2n})+\delta_{2n}+\delta_{2n+1}.
		\end{split}
	\end{equation}
	We then conclude that $\lim d(x'_{2n},x_{2n})=0$ by applying Lemma~\ref{L:Xu} with $r_n:=0$ and $v_n:=\delta_{2n}+\delta_{2n+1}$, if $\sum \delta_n <\infty$, or with $r_n:=\frac{\delta_{2n}+\delta_{2n+1}}{\alpha_n}$ and $v_n:=0$, if $\lim \frac{\delta_{2n}+\delta_{2n+1}}{\alpha_n}=0$. Then by \eqref{e:dx'toximpar}, since $\delta_n \to 0$, also $\lim d(x'_{2n+1},x_{2n+1})=0$ . Hence $\lim d(x'_{n},x_{n})=0$.

	Let us show part (1). Clearly, \eqref{e:dx'toxpar} entails that $d(x'_{2n},x_{2n})\leq  \sum_{i=0}^{2n-1}\delta_i$. With $D:= \lceil \sum_{i=0}^{\chi_1(1)}\delta_i\rceil +1$ it is easy to see that the assumption on $\chi_1$ implies that $D$ is an upper bound on  $\left( \sum \delta_n \right)$, and so also on $\left(d(x'_{2n},x_{2n})\right)$. 
	The function $\chi_1$ is also a Cauchy rate for $\left(\sum v_n\right)$, where  $v_n:=\delta_{2n}+\delta_{2n+1}$. Indeed, for all $\eps >0$ and $n \in \N$
	\SE{
		\sum_{i=\chi_1(\eps)+1}^{\chi_1(\eps)+n}v_n=\sum_{i=\chi_1(\eps)+1}^{\chi_1(\eps)+n}\delta_{2i}+\delta_{2i+1}=\sum_{i=2\chi_1(\eps)+2}^{2\chi_1(\eps)+2n+1}\delta_{i}\leq \sum_{i=\chi_1(\eps)+1}^{2\chi_1(\eps)+2n+1}\delta_{i} \leq \eps.
	}
	Now, by Lemma~\ref{L:xu_seq_reals_qt1}(1) we have that $\theta=\widehat{\theta}[\Gamma_2,\chi_1,D]$ is a rate of convergence for $\lim d(x'_{2n},x_{2n})=0$. Since $\chi_1$ is a Cauchy rate for $\left( \sum \delta_n \right)$, we have $\delta_{2n} \to 0$ with rate of convergence $\chi_1+1$. Using \eqref{e:dx'toximpar} we conclude that 
	\SE{
		\forall n \geq \max\left\{\theta\left(\frac{\eps}{2}\right), \chi_1\left(\frac{\eps}{2}\right)+1\right\} \left(d(x'_{2n},x_{2n}), d(x'_{2n+1},x_{2n+1}) \leq \eps \right),
	}
	which entails part (1).
	
	We now turn to part (2). From the assumption on $\chi_2$, we have that $\max_{i \leq \chi_2(1)} \left\{\frac{\delta_{2i}+\delta_{2i+1}}{\alpha_i},1\right\}$ is an upper bound on the sequence $\left(\frac{\delta_{2n}+\delta_{2n+1}}{\alpha_n} \right)$. Using \eqref{e:dx'toxpar}, one shows by induction that $\left(d(x'_{2n},x_{2n})\right)$ is bounded by $D$, where $D:=\left\lceil\max_{i \leq \chi_2(1)} \left\{\frac{\delta_{2i}+\delta_{2i+1}}{\alpha_i},1\right\}\right\rceil$. Now, by Lemma~\ref{L:xu_seq_reals_qt1}(2) we have that $\theta=\widecheck{\theta}[\Gamma_2,\chi_2,D]$ is a rate of convergence for $\lim d(x'_{2n},x_{2n})=0$. Since $\chi_2$ is a rate of convergence for $\lim \frac{\delta_{2n}+\delta_{2n+1}}{\alpha_n}=0$, we have $\delta_{2n} \to 0$ with the same rate of convergence. Using \eqref{e:dx'toximpar} we conclude that 
	\SE{
		\forall n \geq \max\left\{\theta\left(\frac{\eps}{2}\right), \chi_2\left(\frac{\eps}{2}\right)\right\} \left(d(x'_{2n},x_{2n}), d(x'_{2n+1},x_{2n+1}) \leq \eps \right),
	}
	which entails the result.
\end{proof}
A rate of metastability for the generalized sequence with error terms \eqref{e:MannHalpernerrors} can now be immediately obtained from Theorem~\ref{t:main_meta} using Lemma~\ref{l:Cauchymetric} and Theorem~\ref{t:quantresults1}. Additionally, it is clear that $(x'_n)$ is asymptotically regular and rates of asymptotic regularity can be trivially computed using the rates in Proposition~\ref{p:asymptoticregularity} together with the rates of convergence in Theorem~\ref{t:quantresults1}.

\begin{corollary}
	Consider sequences $(\alpha_n)$, $(\beta_n) \subset [0,1]$, a sequence of error terms $(\delta_n) \subset [0,+\infty)$ satisfying $\sum \delta_{n} < \infty$, or $\lim \frac{\delta_{2n}+\delta_{2n+1}}{\alpha_n}=0$ (if $\alpha_n >0$), and let $(x'_n)$ be generated by \eqref{e:MannHalpernerrors}. Then, under the conditions of Theorem~\ref{t:main_meta}, the sequence $(x'_n)$ is a Cauchy sequence. Moreover,
	\begin{enumerate}[$(1)$]
		\item if $\chi_1$ is a Cauchy rate for $\left(\sum \delta_{n} \right)$, then $\mu_{\widehat{\nu}}$ is a rate of metastability for $(x'_n)$;
		\item if $\chi_2$ is a rate of convergence for $\lim \frac{\delta_{2n}+\delta_{2n+1}}{\alpha_n}=0$, then $\mu_{\widecheck{\nu}}$ is a rate of metastability for $(x'_n)$;
	\end{enumerate}
	where $\mu$ is as in Theorem~\ref{t:main_meta}, $\widehat{\nu},\widecheck{\nu}$ are as in Theorem~\ref{t:quantresults1} and the construction $\tau_\nu$ is as in Lemma~\ref{l:Cauchymetric}.
\end{corollary}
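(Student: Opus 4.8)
The plan is to obtain the corollary by composing three results already established, with no new estimate required: the metastability of the exact iteration $(x_n)$ from Theorem~\ref{t:main_meta}, the asymptotic closeness $\lim d(x'_n,x_n)=0$ from Theorem~\ref{t:quantresults1}, and the transfer principle of Lemma~\ref{l:Cauchymetric} (a rate of metastability for one sequence together with a rate of convergence to it yields a rate of metastability for the other). First I would check that the hypotheses line up. ``The conditions of Theorem~\ref{t:main_meta}'' amount exactly to the existence of monotone $\Gamma_1,\Gamma_2,\Gamma_3,\Gamma_4$ and a constant $\gamma\in(0,1)$ witnessing $(Q_{\ref{Q1}})-(Q_{\ref{Q5}})$ for $(\alpha_n),(\beta_n)$, together with $N\in\N\setminus\{0\}$ satisfying $N\geq\max\{d(x_0,p),2d(u,p)\}$ for some $p\in F$; in particular $(Q_{\ref{Q2}})$ holds with rate of divergence $\Gamma_2$, which is precisely the only quantitative hypothesis used by Theorem~\ref{t:quantresults1}, and $x'_0=x_0$ by the definition of \eqref{e:MannHalpernerrors}.

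So I would first invoke Theorem~\ref{t:main_meta} to fix $\mu$ as a rate of metastability for $(x_n)$. Then, splitting into the two cases of the hypothesis on $(\delta_n)$: in case (1), where $\chi_1$ is a Cauchy rate for $\left(\sum\delta_{n}\right)$, Theorem~\ref{t:quantresults1}(1) yields $\lim d(x'_n,x_n)=0$ with rate of convergence $\widehat\nu$; in case (2), where $\chi_2$ is a rate of convergence for $\lim\frac{\delta_{2n}+\delta_{2n+1}}{\alpha_n}=0$, Theorem~\ref{t:quantresults1}(2) yields the same limit with rate of convergence $\widecheck\nu$. Finally I would apply Lemma~\ref{l:Cauchymetric} with $(w_n):=(x_n)$, $(z_n):=(x'_n)$, $\tau:=\mu$, and $\nu:=\widehat\nu$ in case (1) (respectively $\nu:=\widecheck\nu$ in case (2)); its conclusion is precisely that $(x'_n)$ is a Cauchy sequence with rate of metastability $\tau_\nu$, i.e.\ $\mu_{\widehat\nu}$ (respectively $\mu_{\widecheck\nu}$) in the notation of the statement.

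There is no genuine obstacle here; the only thing requiring care is the bookkeeping — confirming that the conventions for ``rate of convergence'' and ``rate of metastability'' in Definition~\ref{d:HS} are exactly those produced by, and consumed by, each of the three ingredients, that the monotonicity carried by the $\Gamma_i$ (hence by $\mu$) is not demanded anywhere it is unavailable, and that the $\eps/3$-splitting built into the construction $\tau_\nu$ of Lemma~\ref{l:Cauchymetric} composes cleanly with the explicit shapes of $\widehat\nu$ and $\widecheck\nu$ coming from Theorem~\ref{t:quantresults1}. If in addition one wanted explicit rates of asymptotic regularity for $(x'_n)$, they would drop out by the triangle inequality from Proposition~\ref{p:asymptoticregularity} applied to $(x_n)$ combined with the same rates $\widehat\nu$, $\widecheck\nu$.
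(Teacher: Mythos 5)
Your proposal is correct and is precisely the composition the paper has in mind: the corollary is stated as an immediate consequence of Theorem~\ref{t:main_meta} (giving $\mu$ for $(x_n)$), Theorem~\ref{t:quantresults1} (giving $\widehat\nu$ or $\widecheck\nu$ for $\lim d(x'_n,x_n)=0$), and the transfer Lemma~\ref{l:Cauchymetric}, applied with $(w_n)=(x_n)$, $(z_n)=(x'_n)$. No further comment is needed.
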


\section{Strong Convergence}\label{s:Strongconvergence}
We will now argue how the metastability result from the previous section actually entails the strong convergence of the iteration $(x_n)$. We begin by recalling a well-known characterization of the metric projection in terms of the quasi-linearization function.
\begin{lemma}[\cite{DehghanRooin}]\label{l:charactProj}	Let $S$ be a nonempty convex closed subset of a complete $\CAT$ space $X$. For any $u\in X$, let $P_S(u)$ denote the metric projection of $u\in X$ onto $S$. Then,
	\[
	\forall y\in S\, \left(\langle \overrightarrow{uP_S(u)}, \overrightarrow{yP_S(u)}\rangle \leq 0\right).
	\]
\end{lemma}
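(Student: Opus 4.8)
The plan is to prove Lemma~\ref{l:charactProj} directly from the defining minimality property of the metric projection, using the $\mathrm{CN}^+$ inequality (i.e.\ uniform convexity in quadratic form) applied along the geodesic segment joining $P_S(u)$ to an arbitrary $y \in S$. Concretely, fix $u \in X$ and write $p := P_S(u)$. For $y \in S$ and $t \in [0,1]$, set $w_t := (1-t)p \oplus t y$; since $S$ is convex, $w_t \in S$, so by minimality $d^2(u,p) \leq d^2(u,w_t)$ for all such $t$.

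The key computation is the expansion of $d^2(u,w_t)$ via \eqref{CN}, exactly as in the proof of Lemma~\ref{L:2.7K}: one obtains
\[
d^2(u,w_t) \leq d^2(u,p) + 2t\,\langle \overrightarrow{up},\overrightarrow{py}\rangle + t^2 d^2(p,y),
\]
using that $\langle \overrightarrow{up},\overrightarrow{py}\rangle = \tfrac12\big(d^2(u,y) + d^2(p,p) - d^2(u,p) - d^2(p,y)\big)$ from the definition of the quasi-linearization function. Combining this with $d^2(u,p) \leq d^2(u,w_t)$ gives, for every $t \in (0,1]$,
\[
0 \leq 2t\,\langle \overrightarrow{up},\overrightarrow{py}\rangle + t^2 d^2(p,y),
\]
hence $\langle \overrightarrow{up},\overrightarrow{py}\rangle \geq -\tfrac{t}{2} d^2(p,y)$ for all $t \in (0,1]$. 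Letting $t \to 0^+$ yields $\langle \overrightarrow{up},\overrightarrow{py}\rangle \geq 0$. Finally, by property $(iii)$ of the quasi-linearization function, $\langle \overrightarrow{up},\overrightarrow{yp}\rangle = -\langle \overrightarrow{up},\overrightarrow{py}\rangle \leq 0$, which is the claimed inequality.

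There is essentially no serious obstacle here: the argument is a routine convexity/variational computation, and indeed it is a special case of (the combination of) the reasoning already carried out in Lemma~\ref{L:2.7K}, specialized to $S$ in place of an approximate fixed point set and using the exact projection rather than its $\eps$-weakening. The only point requiring a word of care is the passage $t \to 0^+$, which is legitimate since the inequality $\langle \overrightarrow{up},\overrightarrow{py}\rangle \geq -\tfrac{t}{2}d^2(p,y)$ holds for \emph{every} $t \in (0,1]$ with the left-hand side independent of $t$. Alternatively, since Lemma~\ref{l:charactProj} is attributed to \cite{DehghanRooin}, one may simply cite that reference; but the self-contained derivation above is short enough to include.
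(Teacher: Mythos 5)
Your argument is correct, and in fact the paper does not give a proof of this lemma at all; it simply attributes the statement to the reference \cite{DehghanRooin}. So your self-contained derivation is not a variant of the paper's proof but a replacement for the citation, which is a legitimate and worthwhile thing to include.

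On the substance: the variational argument you run is exactly the standard one, and it is essentially the same expansion already carried out in Lemma~\ref{L:2.7K}, specialized from the $\eps$-approximate/approximate-fixed-point setting to the exact minimizer $p=P_S(u)$. The chain $d^2(u,p)\leq d^2(u,w_t)\leq d^2(u,p)+2t\,\langle\overrightarrow{up},\overrightarrow{py}\rangle+t^2d^2(p,y)$ is correct (one checks directly that the right-hand side equals the \eqref{CN} bound $(1-t)d^2(u,p)+td^2(u,y)-t(1-t)d^2(p,y)$), convexity of $S$ is used to ensure $w_t\in S$, and letting $t\to 0^+$ in the inequality $\langle\overrightarrow{up},\overrightarrow{py}\rangle\geq -\tfrac{t}{2}d^2(p,y)$ is legitimate since the left-hand side does not depend on $t$. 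One small bookkeeping remark: the final sign flip $\langle\overrightarrow{up},\overrightarrow{yp}\rangle=-\langle\overrightarrow{up},\overrightarrow{py}\rangle$ uses property $(iii)$ on the \emph{first} argument only as stated; to flip the second argument you should either invoke symmetry $(ii)$ together with $(iii)$, or just note that the identity is immediate from the explicit formula for the quasi-linearization function. This is cosmetic and does not affect correctness. Note also that neither closedness of $S$ nor completeness of $X$ enters your argument; those hypotheses are needed only to guarantee that $P_S(u)$ exists and is unique, after which the inequality is purely a consequence of convexity and \eqref{CN}.
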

We are now ready to give the proof of the main theorem.
\begin{proof}[\textbf{Proof of Theorem~\ref{t:main}}]
	From Theorem~\ref{t:main_meta} and Lemma~\ref{l:metaCauchy}, $(x_n)$ is a Cauchy sequence and therefore convergent since $X$ is complete. Let $z=\lim x_n$, which is an element of $C$, because $C$ is closed. By Proposition~\ref{p:asymptoticregularity} and the continuity of $T$ and $U$, we conclude that $z\in F$. From Lemma~\ref{l:charactProj} (with $S=F$), we conclude that $\langle \overrightarrow{uP_F(u)}, \overrightarrow{zP_F(u)}\rangle \leq 0$.
	Using Lemma~\ref{l:binomial}, with $s_n:=d^2(x_{2n}, P_F(u))$ we have
	\begin{align}
		s_{n+1}&\leq d^2(x_{2n+1}, P_F(u)) \nonumber\\
		&\leq (1-\alpha_n)^2d^2(T(x_{2n}), P_F(u))+2\alpha_n(1-\alpha_n)\langle \overrightarrow{uP_F(u)},\overrightarrow{T(x_{2n})P_F(u)}\rangle + \alpha_n^2d^2(u, P_F(u)) \nonumber\\
		&\leq (1-\alpha_n)d^2(x_{2n}, P_F(u))+\alpha_n\left(2\langle \overrightarrow{uP_F(u)}, \overrightarrow{T(x_{2n})P_F(u)}\rangle + \alpha_nd^2(u, P_F(u))\right)\nonumber\\
		& =  (1-\alpha_n)s_n+\alpha_n r_n,\label{ineqProofmain}
	\end{align}
	where $r_n=2\langle \overrightarrow{uP_F(u)}, \overrightarrow{T(x_{2n})P_F(u)}\rangle + \alpha_nd^2(u, P_F(u))$.
	Since
	\SE{
		\langle \overrightarrow{uP_F(u)}, \overrightarrow{T(x_{2n})P_F(u)}\rangle &=  \langle \overrightarrow{uP_F(u)}, \overrightarrow{T(x_{2n})x_{2n}}\rangle+\langle \overrightarrow{uP_F(u)}, \overrightarrow{x_{2n}z}\rangle+\langle \overrightarrow{uP_F(u)}, \overrightarrow{zP_F(u)}\rangle\\
		&\leq  d(u,P_F(u))\left(d(T(x_{2n}),x_{2n})+d(x_{2n},z)\right) \to 0,
	}
	and $\alpha_n\to 0$, we conclude that $\limsup r_n \leq 0$. Since $\sum \alpha_n=\infty$, we can apply Lemma~\ref{L:Xu} to conclude that $\lim s_n=0$. This means that $\lim x_{2n}=P_F(u)$, and so $z=P_F(u)$.
\end{proof}

As discussed in the Introduction, Theorem~\ref{t:main} is a twofold generalization of the recent strong convergence result, established in the setting of Hilbert spaces, by Bo\c{t}, Csetnek, and  Meier~\cite[Theorem~3]{Botetal(19)}. On the one hand, our iteration is more general than the one considered by Bo\c{t} \emph{et al.} and, on the other hand, the strong convergence is established in $\CAT$ spaces, which are frequently considered the non-linear generalization of Hilbert spaces. Our schema is also a generalization of the algorithm considered by Leus{\c t}ean and Cheval in \cite{CK(ta)} in the setting of hyperbolic spaces. Even though the latter context is more general than that of  $\CAT$ spaces, our Theorem~\ref{t:main} succeeds in proving strong convergence while \cite[Theorems~4.1 and 4.2]{CK(ta)} only show the asymptotic regularity of the iteration. Observe that not only the restriction to $\CAT$ spaces was required to prove strong convergence, but we also needed to consider the condition $0<\liminf \beta_n$ (which has no corresponding condition in \cite{Botetal(19)} nor in \cite{CK(ta)}) in order to obtain asymptotic regularity for the more general iteration \eqref{e:MannHalpern}. In light of the new results in \cite{CKL(ta)} connecting the iteration \eqref{CL} with the modified Halpern iteration, our results also provides quantitative information on the modified Halpern iteration (first obtained in \cite{SK}). 

Since the Halpern iteration follows from a particular case of \eqref{e:MannHalpern} -- cf.\ the proof of Corollary~\ref{C:Saejungmeta} --, Theorem~\ref{t:main} also allows to recover Saejung's strong convergence of the Halpern iteration in $\CAT$ spaces from \cite[Theorem~2.3]{Saejung(10)}. We would like to point out that, contrary to Saejung's proof, we do not require the heavy machinery of Banach limits. Finally, we point out that Theorem~\ref{t:quantresults1}, also extends the strong convergence result to the iterative schema \eqref{e:MannHalpernerrors}.

\section{Forward-backward and Douglas-Rachford algorithms}\label{s:FBDR}

In this section we work in a Hilbert space $H$ and apply our main results to obtain strongly convergent variants of the Forward-backward and Douglas-Rachford algorithms.

 We recall that a multi-valued operator $\mathsf{T}:H \rightrightarrows H$  is \emph{monotone} if whenever $(x,y)$ and $(x',y')$ are elements of the graph of $\mathsf{T}$, it holds that $\langle x-x',y-y'\rangle \geq 0$. A monotone operator $\mathsf{T}$ is said to be \emph{maximal monotone} if  the graph of $\mathsf{T}$ is not properly contained in the graph of any other monotone operator on $H$. 

We use $J_{\mathsf{T}}$  to denote the \emph{resolvent function} of $\mathsf{T}$, i.e.\ the single-valued function defined by $J_{\mathsf{T}} = (I + \mathsf{T} )^{-1}$ and $R_{ \mathsf{T}}$ to denote the nonexpansive  \emph{reflected resolvent function} defined by $R_{ \mathsf{T}}:=2J_{\mathsf{T}} - \Id$.

\begin{definition}
A mapping $U : H \to H$ is called \emph{firmly nonexpansive} if
$$\forall x, y \in H\left( \norm{U(x)-U(y)}^2 \leq \norm{x-y}^2-\norm{(\Id -U)(x)- (\Id-U)(y)}^2 \right).$$
\end{definition}
Clearly, if $U$ is firmly nonexpansive then it is nonexpansive. 
 For $c>0$, the resolvent function $J_{c\mathsf{T}}$ is firmly nonexpansive and the set of fixed points of $J_{c\mathsf{T}}$ coincides with the set of all zeros of $\mathsf{T}$. More information on firmly nonexpansive mappings can be found in \cite{R(77),GR(84)}.

For $\alpha \in (0,1]$, a function $U:H \to H$ is called \emph{$\alpha$-averaged}\footnote{The standard definition \cite{BBR(78)} asks for $\alpha \in (0,1)$. With this extension, $1$-averaged is just another way of saying nonexpansive.} if there exists a nonexpansive operator $U':H \to H$ such that $U=(1-\alpha)\Id+\alpha U'$. The $\alpha$-averaged operators are always nonexpansive. Moreover, the $\frac{1}{2}$-averaged operators coincide with the firmly nonexpansive operators.  

In the following corollaries we assume that we have functions $\Gamma_1, \Gamma_2, \Gamma_3, \Gamma_4$ satisfying conditions $(Q_{\ref{Q1}})-(Q_{\ref{Q4}})$.
\begin{corollary}\label{c:Cor1}
Let $\alpha \in (0,1]$ and $U:H \to H$ be $\alpha$-averaged. Given $(\alpha_n) \subset [0,1]$, $(\beta_n)\subset [1-\frac{1}{\alpha},1]$, and $x_0,u \in H$, consider $(x_n)$ generated by 
\begin{equation}\label{e:HMaveraged}\tag{H$_{\Id}$M}
\begin{cases}
x_{2n+1}&=(1-\alpha_n)x_{2n}+\alpha_nu\\
x_{2n+2}&=(1-\beta_n)U(x_{2n+1})+ \beta_n x_{2n+1}.
\end{cases}
\end{equation}
 Let $\sigma \in (0,1)$ be such that $\alpha\geq \sigma$.  
Assume that there exist $N \in \N \setminus\{0\}$ such that $N \geq \max\{\norm{x_0-p},2\norm{u-p}\}$, for some $p \in \Fix U$ and $\gamma \in (0,\frac{1}{2\sigma}]$ satisfying $1-\frac{1}{\alpha}+\gamma\leq \beta_n \leq 1-\gamma$, for all $n \in \N$.

Then $(x_n)$ converges strongly to $P_{\Fix(U)}(u)$ and 
\begin{equation*}
\mu_1[N,\sigma,\gamma,\Gamma_1, \Gamma_2, \Gamma_3, \Gamma_4]:=\mu[N,\sigma\gamma,\Gamma_1, \Gamma_2, \Gamma_3, \Gamma_4], \mbox{ with $\mu$ as in Theorem~\ref{t:main_meta},}
\end{equation*} 
is a rate of metastability for $(x_n)$.
\end{corollary}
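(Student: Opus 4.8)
The plan is to reduce Corollary~\ref{c:Cor1} directly to Theorem~\ref{t:main} and Theorem~\ref{t:main_meta} by writing the $\alpha$-averaged map $U$ as a Mann-type average and absorbing the averaging parameter into the relaxation parameters $\beta_n$. Concretely, since $U$ is $\alpha$-averaged, there is a nonexpansive $U':H\to H$ with $U=(1-\alpha)\Id+\alpha U'$. First I would substitute this into the second line of \eqref{e:HMaveraged}: for each $n$,
\[
(1-\beta_n)U(x_{2n+1})+\beta_n x_{2n+1}
=(1-\beta_n)\big((1-\alpha)x_{2n+1}+\alpha U'(x_{2n+1})\big)+\beta_n x_{2n+1}
=(1-\alpha(1-\beta_n))x_{2n+1}+\alpha(1-\beta_n)U'(x_{2n+1}).
\]
Setting $\beta_n':=1-\alpha(1-\beta_n)$, the iteration \eqref{e:HMaveraged} becomes exactly an instance of \eqref{e:MannHalpern} in the Hilbert space $H$ (hence a complete $\CAT$ space) with $T=\Id_H$, $C=H$, the same anchor $u$, starting point $x_0$, the same $(\alpha_n)$, and relaxation sequence $(\beta_n')$. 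Note $\Fix(T)\cap\Fix(U')=\Fix(U')=\Fix(U)$, which is nonempty by hypothesis, so $F=\Fix(U)$.

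The second step is to check that $(\beta_n')$ satisfies the hypotheses $(Q_{\ref{Q4}})$ and $(Q_{\ref{Q5}})$ needed to apply Theorem~\ref{t:main} and Theorem~\ref{t:main_meta}, with the right quantitative data. Since $\beta_n'=1-\alpha(1-\beta_n)$ we have $|\beta_{n+1}'-\beta_n'|=\alpha|\beta_{n+1}-\beta_n|\leq|\beta_{n+1}-\beta_n|$, so $\Gamma_4$ (a Cauchy rate for $\sum|\beta_{n+1}-\beta_n|$) is also a Cauchy rate for $\sum|\beta_{n+1}'-\beta_n'|$; thus $(Q_{\ref{Q4}})$ holds with the same $\Gamma_4$. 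For $(Q_{\ref{Q5}})$: from $1-\tfrac1\alpha+\gamma\leq\beta_n\leq 1-\gamma$ one gets $\gamma\leq 1-\beta_n\leq \tfrac1\alpha-\gamma$ and hence, multiplying by $\alpha$, $\alpha\gamma\leq\alpha(1-\beta_n)\leq 1-\alpha\gamma$, i.e.\ $\alpha\gamma\leq 1-\beta_n'\leq 1-\alpha\gamma$, which rearranges to $\alpha\gamma\leq\beta_n'\leq 1-\alpha\gamma$. Using $\alpha\geq\sigma$ and $\gamma\le \tfrac1{2\sigma}$ one checks $\sigma\gamma\leq\alpha\gamma$ and $\alpha\gamma\le\tfrac12$, hence $\sigma\gamma\in(0,1/2]$ and $\sigma\gamma\leq\beta_n'\leq 1-\sigma\gamma$, so $(Q_{\ref{Q5}})$ holds with the constant $\sigma\gamma$. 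The bound $N\geq\max\{\|x_0-p\|,2\|u-p\|\}\geq\max\{\|x_0-p\|,\|u-p\|\}$ supplies the constant required by Lemma~\ref{l:bounded} and Theorem~\ref{t:main_meta}. The conditions $(Q_{\ref{Q1}})$--$(Q_{\ref{Q3}})$ on $(\alpha_n)$ are unchanged.

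The final step is simply to invoke the two main theorems: by Theorem~\ref{t:main} applied to \eqref{e:MannHalpern} with $(T,U)=(\Id_H,U')$ and parameters $(\alpha_n,\beta_n')$, the sequence $(x_n)$ converges strongly to $\proj_F(u)=P_{\Fix(U)}(u)$; and by Theorem~\ref{t:main_meta} with the same data, $(x_n)$ is Cauchy with rate of metastability $\mu[N,\sigma\gamma,\Gamma_1,\Gamma_2,\Gamma_3,\Gamma_4]$, which is exactly the declared $\mu_1[N,\sigma,\gamma,\Gamma_1,\Gamma_2,\Gamma_3,\Gamma_4]$. I do not anticipate a genuine obstacle here: the content of the corollary is the algebraic rewriting $U=(1-\alpha)\Id+\alpha U'$ together with the interval bookkeeping for $(\beta_n')$, and the only point requiring a little care is verifying that the hypothesis $\beta_n\in[1-\tfrac1\alpha,1]$ together with the two-sided bound on $\beta_n$ indeed forces $\beta_n'\in(0,1)$ with the claimed uniform gap $\sigma\gamma$ — in particular that $\beta_n'<1$, which follows from $\beta_n\le 1-\gamma<1$ giving $1-\beta_n>0$ so $\beta_n'=1-\alpha(1-\beta_n)<1$.
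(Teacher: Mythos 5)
Your proposal is correct and follows essentially the same route as the paper: you decompose $U=(1-\alpha)\Id+\alpha U'$, absorb $\alpha$ into the new relaxation parameter $\beta_n'=1-\alpha(1-\beta_n)$ (the paper's $\tilde\beta_n=1-\alpha+\alpha\beta_n$, the same quantity), verify that $\Gamma_4$ still serves as a Cauchy rate and that $\sigma\gamma$ satisfies $(Q_5)$, and then invoke Theorems~\ref{t:main} and~\ref{t:main_meta}. The only nitpick is that your intermediate claim ``$\alpha\gamma\le\tfrac12$'' does not follow directly from $\alpha\ge\sigma$ and $\gamma\le\tfrac1{2\sigma}$ (it instead follows from the consistency of the two-sided bound on $\beta_n$), but the conclusion you actually need, $\sigma\gamma\le\tfrac12$, does follow immediately from $\gamma\le\tfrac1{2\sigma}$, so the argument is sound.
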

\begin{proof}
Since $U$ is $\alpha$-averaged, there exists $U'$ nonexpansive and such that $U=(1-\alpha)\Id+\alpha U'$. For all $n \in \N$
\begin{equation*}
\begin{split}
x_{2n+2}&=(1-\beta_n)U(x_{2n+1})+ \beta_n x_{2n+1}\\
&=(1-\beta_n)[(1-\alpha)\Id+\alpha U'](x_{2n+1})+ \beta_n x_{2n+1}\\
&=(1-\tilde{\beta}_n)U'(x_{2n+1})+ \tilde{\beta}_n x_{2n+1},
\end{split}
\end{equation*}
with $\tilde{\beta}_n:=1- \alpha+\alpha\beta_n$.
Hence $(x_n)$ is generated by \eqref{e:MannHalpern}, using the sequences $(\alpha_n),(\tilde{\beta}_n) \subset [0,1]$, $T=\Id$ and $U= U'$. It is easy to see that $\sigma\gamma$ satisfies condition $(Q_{\ref{Q5}})$ for the sequence $(\tilde{\beta}_n)$. Note that $\Fix U = \Fix U'=F$. Since $\alpha \in (0,1]$, for all $\eps >0$ and $n \in \N$ we have
 $$\sum_{i=\Gamma_4(\eps)+1}^{\Gamma_4(\eps)+n}|\tilde{\beta}_{i+1}-\tilde{\beta}_{i}|\leq \sum_{i=\Gamma_4(\eps)+1}^{\Gamma_4(\eps)+n}|\beta_{i+1}-\beta_{i}|\leq\eps,$$
and so condition $(Q_{\ref{Q4}})$ still holds with $\Gamma_4$ for the sequence $(\tilde{\beta}_n)$.
Hence, by Theorem~\ref{t:main} the sequence $(x_n)$ converges strongly to $P_{\Fix(U)}(u)$. The rate of metastability follows from an application of Theorem~\ref{t:main_meta} with $\gamma:=\sigma\gamma$.
\end{proof}

For $\delta >0$, a function $U:H \to H$ is said to be \emph{$\delta$-cocoercive} if for all $x,y \in H$,
$$\langle x-y, U(x)-U(y)\rangle \geq \delta \norm{U(x)-U(y)}^2,$$
which is equivalent to say that $\delta U$ is firmly nonexpansive. In the following result we give a strongly convergent version of the forward-backward algorithm which extends \cite[Theorem~7]{Botetal(19)} (as well as the quantitative analysis from \cite[Corollary~2]{DP(ta)}). The algorithm \eqref{e:GFB} below can be seen as a generalization in the sense that for $\alpha_n=\beta_n\equiv 0$ the sequence of even terms $(x_{2n})$ is indeed the original weakly convergent forward-backward algorithm by Lions and Mercier \cite{LM(79)}.

\begin{corollary}\label{c:Cor2}
Let $U_1: H \rightrightarrows H$ be maximal monotone  and $U_2:H \to H$ be $\delta$-cocoercive, for some $\delta >0$. Let $c \in (0,2\delta]$. Given $(\alpha_n) \subset [0,1]$, $(\beta_n) \subset [1-\frac{4\delta-c}{2\delta},1]$, and $x_0, u \in H$, consider $(x_n)$ generated by 
\begin{equation}\tag{GFB}\label{e:GFB}
\begin{cases}
	x_{2n+1}&=(1-\alpha_n)x_{2n}+\alpha_nu\\
	x_{2n+2}&=(1-\beta_n)J_{cU_1}\left(x_{2n+1}-cU_2(x_{2n+1})\right)+ \beta_n x_{2n+1}.
\end{cases}
\end{equation}

Assume that there exist $N \in \N \setminus\{0\}$ such that $N \geq \max\{\norm{x_0-p},2\norm{u-p}\}$, for some $p \in zer (U_1+U_2)$ and $\gamma \in(0,1]$ satisfying $1-\frac{4\delta-c}{2\delta}+\gamma\leq \beta_n\leq 1-\gamma$, for all $n\in \N$. Then $(x_n)$ converges strongly to $P_{zer(U_1+U_2)}(u)$ and
$$
\mu_2:=\mu_2[N,\gamma,\Gamma_1, \Gamma_2, \Gamma_3, \Gamma_4]:=\mu[N,\gamma/2,\Gamma_1, \Gamma_2, \Gamma_3, \Gamma_4], \text{ with $\mu$ as in Theorem~\ref{t:main_meta}},
$$
is a rate of metastability for $(x_n)$.
\end{corollary}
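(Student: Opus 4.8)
The plan is to recognise \eqref{e:GFB} as an instance of the iteration \eqref{e:HMaveraged} treated in Corollary~\ref{c:Cor1}, with the specific $\alpha$-averaged operator $U:=J_{cU_1}\circ(\Id-cU_2)$, and then simply invoke that corollary. Indeed, the even/odd recursion defining \eqref{e:GFB} is literally \eqref{e:HMaveraged} for this $U$, so everything reduces to identifying the right averagedness constant and checking that the hypotheses transfer.

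First I would pin down the averagedness constant of $U$. As recalled above, since $U_1$ is maximal monotone the resolvent $J_{cU_1}$ is single-valued and firmly nonexpansive, hence $\tfrac12$-averaged; and since $U_2$ is $\delta$-cocoercive, $\delta U_2$ is firmly nonexpansive, so $U_2=\tfrac1{2\delta}\Id+\tfrac1{2\delta}V$ for some nonexpansive $V$, whence $\Id-cU_2=\bigl(1-\tfrac{c}{2\delta}\bigr)\Id+\tfrac{c}{2\delta}(-V)$ is $\tfrac{c}{2\delta}$-averaged (here $\tfrac{c}{2\delta}\in(0,1]$ because $c\in(0,2\delta]$, the endpoint value $1$ being admissible by the convention, recalled in the excerpt, that $1$-averaged means nonexpansive). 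By the standard composition rule for averaged operators, applied with $\alpha_1=\tfrac12$ and $\alpha_2=\tfrac{c}{2\delta}$, the map $U$ is $\alpha$-averaged with
\[
\alpha=\frac{\alpha_1+\alpha_2-2\alpha_1\alpha_2}{1-\alpha_1\alpha_2}=\frac{1/2}{\,1-\tfrac{c}{4\delta}\,}=\frac{2\delta}{4\delta-c}\in\Bigl(\tfrac12,1\Bigr].
\]

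Next I would match the remaining hypotheses. A short computation with resolvents gives $\Fix(U)=zer(U_1+U_2)$: the equation $x=J_{cU_1}(x-cU_2x)$ is equivalent to $-cU_2x\in cU_1x$, i.e.\ $0\in c(U_1+U_2)x$, and $c>0$. In particular the assumed $p\in zer(U_1+U_2)$ lies in $\Fix(U)$. Moreover $1-\tfrac1\alpha=1-\tfrac{4\delta-c}{2\delta}$, so the assumptions $(\beta_n)\subset[1-\tfrac{4\delta-c}{2\delta},1]$ and $1-\tfrac{4\delta-c}{2\delta}+\gamma\le\beta_n\le1-\gamma$ with $\gamma\in(0,1]$ are exactly the hypotheses of Corollary~\ref{c:Cor1} for the choice $\sigma:=\tfrac12$ (note $\alpha\ge\tfrac12=\sigma$ and $(0,\tfrac1{2\sigma}]=(0,1]$). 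Corollary~\ref{c:Cor1} then yields strong convergence of $(x_n)$ to $P_{\Fix(U)}(u)=P_{zer(U_1+U_2)}(u)$ with rate of metastability $\mu_1[N,\tfrac12,\gamma,\Gamma_1,\Gamma_2,\Gamma_3,\Gamma_4]=\mu[N,\tfrac\gamma2,\Gamma_1,\Gamma_2,\Gamma_3,\Gamma_4]=\mu_2$, which is the claim.

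I expect the only genuinely delicate point to be the bookkeeping of the averagedness constants — in particular verifying that the cocoercive-to-averaged passage and the composition rule behave correctly at the endpoint $c=2\delta$, where $\Id-cU_2$ degenerates to a merely nonexpansive map and the relevant constants attain the value $1$. This is harmless here, since Corollary~\ref{c:Cor1} already admits $\alpha\in(0,1]$ and $1$-averaged (i.e.\ nonexpansive) operators; everything else is a direct substitution.
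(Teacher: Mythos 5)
Your argument is correct and follows the paper's proof essentially verbatim: identify $U:=J_{cU_1}\circ(\Id-cU_2)$ as $\frac{2\delta}{4\delta-c}$-averaged (via the firm nonexpansivity of the resolvent, the $\frac{c}{2\delta}$-averagedness of $\Id-cU_2$, and the composition rule), note $\Fix(U)=zer(U_1+U_2)$, and invoke Corollary~\ref{c:Cor1} with $\sigma=\frac12$. The only cosmetic difference is that you carry the endpoint $c=2\delta$ through the composition formula directly, whereas the paper treats $c<2\delta$ and $c=2\delta$ as separate cases; both give the same value $\alpha=1$.
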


\begin{proof}
It is straightforward to see that the iteration $(x_n)$ is generated by \eqref{e:HMaveraged} with $U:=J_{c U_1} \circ (\Id -c U_2)$. The resolvent function $J_{c U_1}$ is firmly nonexpansive, i.e.\ $\frac{1}{2}$-averaged. Furthermore, we have $\frac{c}{2\delta} \in (0,1]$ and so it follows from \cite[Proposition~4.39]{BC(17)} that $(\Id -c U_2)$ is $\frac{c}{2\delta}$-averaged. If $c < 2\delta$, then we use \cite[Theorem~3$(b)$]{OY(02)} to conclude that $U$ is $\frac{2\delta}{4\delta-c}$-averaged. If $c = 2\delta$, the fact that $U_2$ is $\delta$-cocoercive entails that $\Id - c U_2$ is nonexpansive. Hence $U$ is nonexpansive, or equivalently (following the extended notion of 1-averaged), $U$ is $\frac{2\delta}{4\delta-c}$-averaged since $\frac{2 \delta}{4 \delta - c}=1$.
Since $\Fix U=zer(U_1+U_2)$ \cite[Proposition~26.1$(iv)(a)$]{BC(17)}, we have $N \geq \max\{\norm{x_0-p},\norm{u-p}\}$, for some $p \in \Fix U$. From the fact that $\frac{2\delta}{4\delta-c} \geq \frac{1}{2}$, we may apply Corollary~\ref{c:Cor1} with $\sigma:=\frac{1}{2}$ to conclude the result.
\end{proof}

\begin{remark}
	The fact that no quantitative information regarding `$c>0$' is present in the rate of metastability reflects the fact that this condition is only needed to make sense of the definition of the resolvent function. In fact, if in Corollary~\ref{c:Cor2} the resolvent function $J_{c U_1}$ is replaced by an arbitrary firmly nonexpansive mapping $J$, and $p$ is some point in $\Fix (J \circ (\Id -c U_2))$, then the result holds also for $c=0$ (immediately by Corollary~\ref{c:Cor1} with $\alpha=1/2$). In such case, the sequence $(\beta_n)$ is allowed to vary in the interval $[-1,1]$.
\end{remark}
%

In the following result we give a strongly convergent version of the Douglas–Rachford algorithm which extends \cite[Theorem~10]{Botetal(19)} (as well as the quantitative analysis from \cite[Corollary~3]{DP(ta)}). The algorithm \eqref{e:GDR} below can be seen as a generalization in the sense that for $\alpha_n\equiv 0$ and $\beta_n \equiv -1$ the sequence of even terms $(x_{2n})$ is indeed the original weakly convergent Douglas-Rachford algorithm \cite{DR(56)}.
\begin{corollary}\label{c:Cor3}
Let $U_1,U_2: H \rightrightarrows H$ be two maximal monotone operators and $c >0$. Given $(\alpha_n) \subset [0,1]$, $(\beta_n) \subset [-1,1]$, and $x_0, u \in H$, consider $(x_n)$ generated by
\begin{equation}\tag{GDR}\label{e:GDR}
\begin{cases}
x_{2n+1}&=(1-\alpha_n)x_{2n}+\alpha_nu\\
\quad y_n&=J_{c U_2}(x_{2n+1})\\
\quad z_n&=J_{c U_1}(2y_n-x_{2n+1})\\
x_{2n+2}&=x_{2n+1}+(1-\beta_n)(z_n-y_n).
\end{cases}
\end{equation} 
Assume that there exist $N \in \N \setminus\{0\}$ such that $N \geq \max\{\norm{x_0-p},2\norm{u-p}\}$ for some $p \in \Fix(R_{cU_1}\circ R_{cU_2})$, and $\gamma \in(0,1]$ satisfying $-1+\gamma\leq \beta_n\leq 1-\gamma$, for all $n\in \N$.
Then, 
\begin{enumerate}[$(i)$]
\item $(x_n)$ converges strongly to $\overline{x}=P_{\Fix (R_{c U_1}\circ R_{c U_2})}(u)$ and
$$\mu_3:=\mu_3[N,\gamma,\Gamma_1, \Gamma_2, \Gamma_3, \Gamma_4]:=\mu[N,\gamma/2,\Gamma_1, \Gamma_2, \Gamma_3, \Gamma_4], \, \text{ with $\mu$ as in Theorem~\ref{t:main_meta}},$$
is a rate of metastability for $(x_n)$;
\item $(y_n)$ and $(z_n)$ converge strongly to $J_{cU_2}(\overline{x})\in zer(U_1+U_2)$ and
\begin{equation*}
		\mu_4(\eps,f):=\mu_3(\eps,2f+1), \qquad \mu_5(\eps,f):=\mu_3(\eps/3,2f+1),
\end{equation*}
are rates of metastability for $(y_n)$, $(z_n)$, respectively.
\end{enumerate}
\end{corollary}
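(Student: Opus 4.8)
The plan is to observe that $(x_n)$ from \eqref{e:GDR} is an instance of the iteration \eqref{e:HMaveraged} for the Douglas--Rachford operator, so that part $(i)$ falls out of Corollary~\ref{c:Cor1}, and then to obtain $(ii)$ by continuity of the resolvents and a routine transfer of metastability along the subsequence $(x_{2n+1})$. First I would set $U:=\frac{1}{2}\bigl(\Id + R_{cU_1}\circ R_{cU_2}\bigr)$. Since $R_{cU_1},R_{cU_2}$ are nonexpansive (reflected resolvents of maximal monotone operators, well defined because $c>0$), $R_{cU_1}\circ R_{cU_2}$ is nonexpansive and hence $U$ is $\frac{1}{2}$-averaged. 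From $y_n=J_{cU_2}(x_{2n+1})$ one gets $R_{cU_2}(x_{2n+1})=2y_n-x_{2n+1}$, hence $R_{cU_1}(R_{cU_2}(x_{2n+1}))=2z_n-2y_n+x_{2n+1}$, and therefore $U(x_{2n+1})=x_{2n+1}+z_n-y_n$. Substituting this into the last line of \eqref{e:GDR} gives $x_{2n+2}=x_{2n+1}+(1-\beta_n)(z_n-y_n)=(1-\beta_n)U(x_{2n+1})+\beta_n x_{2n+1}$, so, together with $x_{2n+1}=(1-\alpha_n)x_{2n}+\alpha_n u$, the sequence $(x_n)$ is exactly of the form \eqref{e:HMaveraged}.

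With $\alpha=\sigma=\frac{1}{2}$, the admissible interval for the Mann parameters in Corollary~\ref{c:Cor1} is $[1-\tfrac{1}{\alpha}+\gamma,\,1-\gamma]=[-1+\gamma,\,1-\gamma]$, which is precisely the hypothesis on $(\beta_n)$; moreover $\gamma\in(0,1]=(0,\tfrac{1}{2\sigma}]$, and $\Fix U=\Fix(R_{cU_1}\circ R_{cU_2})$, so the assumption on $N$ and the choice of $p$ carry over verbatim. Corollary~\ref{c:Cor1} then yields that $(x_n)$ converges strongly to $P_{\Fix(R_{cU_1}\circ R_{cU_2})}(u)=\overline{x}$, with rate of metastability $\mu_1[N,\tfrac{1}{2},\gamma,\Gamma_1,\Gamma_2,\Gamma_3,\Gamma_4]=\mu[N,\gamma/2,\Gamma_1,\Gamma_2,\Gamma_3,\Gamma_4]=\mu_3$, which is $(i)$.

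For $(ii)$, since $x_{2n+1}\to\overline{x}$ and $J_{cU_2}$ is continuous, $y_n=J_{cU_2}(x_{2n+1})\to J_{cU_2}(\overline{x})=:q$; likewise $z_n=J_{cU_1}(2y_n-x_{2n+1})\to J_{cU_1}(R_{cU_2}(\overline{x}))$. As $\overline{x}\in\Fix(R_{cU_1}\circ R_{cU_2})$ one has $R_{cU_1}(R_{cU_2}(\overline{x}))=\overline{x}$, hence $J_{cU_1}(R_{cU_2}(\overline{x}))=\frac{1}{2}(\overline{x}+R_{cU_2}(\overline{x}))=J_{cU_2}(\overline{x})=q$, so $z_n\to q$ as well. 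The resolvent inclusions give $\tfrac{1}{c}(\overline{x}-q)\in U_2(q)$ and $\tfrac{1}{c}(q-\overline{x})\in U_1(q)$, whence $0\in(U_1+U_2)(q)$, i.e.\ $q\in zer(U_1+U_2)$ (or cite \cite{BC(17)}). For the rates, nonexpansivity of $J_{cU_2}$ gives $\norm{y_i-y_j}\le\norm{x_{2i+1}-x_{2j+1}}$, and then that of $J_{cU_1}$ gives $\norm{z_i-z_j}\le 2\norm{y_i-y_j}+\norm{x_{2i+1}-x_{2j+1}}\le 3\norm{x_{2i+1}-x_{2j+1}}$. Given $\eps>0$ and $f:\N\to\N$, I would apply the metastability of $(x_n)$ (rate $\mu_3$) with parameter $\eps$, respectively $\eps/3$, and counterfunction $m\mapsto 2f(m)+1$; since $i,j\in[n,f(n)]$ forces $2i+1,2j+1\in[n,2f(n)+1]$, this yields $\mu_4(\eps,f)=\mu_3(\eps,2f+1)$ and $\mu_5(\eps,f)=\mu_3(\eps/3,2f+1)$ as rates of metastability for $(y_n)$ and $(z_n)$.

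The only step that is not bookkeeping is the recasting above, and within it the point that the operator to feed into Corollary~\ref{c:Cor1} is the $\frac{1}{2}$-averaged Douglas--Rachford operator $\frac{1}{2}(\Id+R_{cU_1}\circ R_{cU_2})$, not $R_{cU_1}\circ R_{cU_2}$ itself: this is exactly what makes the $\tfrac{1}{2}$-averagedness provide the interval $[-1+\gamma,1-\gamma]$ needed to accommodate the hypothesis on $(\beta_n)$. Everything after that is continuity of the resolvents and index arithmetic.
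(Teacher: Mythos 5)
Your proof is correct and follows essentially the same route as the paper, with two small packaging differences. For part $(i)$ the paper performs the rewrite by hand, setting $\tilde\beta_n:=(1+\beta_n)/2$ and $U:=R_{cU_1}\circ R_{cU_2}$ nonexpansive, and then invokes Theorems~\ref{t:main} and~\ref{t:main_meta} directly; you instead take $U:=\tfrac12(\Id+R_{cU_1}\circ R_{cU_2})$, identify it as $\tfrac12$-averaged, and route the same arithmetic through Corollary~\ref{c:Cor1} with $\alpha=\sigma=\tfrac12$. This is the same computation and yields the same rate $\mu_3=\mu[N,\gamma/2,\Gamma_1,\Gamma_2,\Gamma_3,\Gamma_4]$. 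For part $(ii)$, the paper derives $z_n\to\lim y_n$ from the recursion via $\norm{z_n-y_n}\le\gamma^{-1}\norm{x_{2n+2}-x_{2n+1}}\to 0$ and cites \cite[Proposition~26.1]{BC(17)} for $J_{cU_2}(\overline{x})\in zer(U_1+U_2)$; you instead obtain $z_n\to q$ by continuity together with the identity $J_{cU_1}(R_{cU_2}(\overline{x}))=\tfrac12(\overline{x}+R_{cU_2}(\overline{x}))=J_{cU_2}(\overline{x})$, and give a self-contained resolvent-inclusion argument for membership in $zer(U_1+U_2)$. Both are fine; yours is slightly more elementary at that point. The transfer of metastability to $(y_n),(z_n)$ via nonexpansivity of the resolvents and the index map $i\mapsto 2i+1$ is identical to the paper's.
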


\begin{proof}
It is easy to see that the iteration $(x_n)$ is generated by \eqref{e:HMaveraged} using the sequences $(\alpha_n), (\tilde{\beta}_n) \subset [0,1]$, where $\tilde{\beta}_n:=\frac{1+\beta_n}{2}$ for all $n\in \N$, and with the nonexpansive map $U:=R_{c U_1} \circ R_{c U_2}$. It is clear that we are still in the conditions of Theorem~\ref{t:main}, and so we conclude that $(x_n)$ converges strongly to $\overline{x}=P_{\Fix (R_{c U_1}\circ R_{c U_2})}(u)$.

Since the condition (Q$_{\ref{Q4}}$) still holds for $(\tilde{\beta}_n)$ with the function $\Gamma_4$, and we have for all $n\in \N$, $\frac{\gamma}{2}\leq \tilde{\beta}_n\leq 1-\frac{\gamma}{2}$, we can apply Theorem~\ref{t:main_meta} to obtain the desired rate of metastability. This concludes part $(i)$.

We now prove part $(ii)$. From part $(i)$, the definition of $(y_n)$ and the continuity of $J_{cU_2}$, we conclude that $(y_n)$ converges strongly to $J_{cU_2}(\overline{x})$. By \cite[Proposition 26.1$(iii)(b)$]{BC(17)}, we have $zer(U_1+U_2)=J_{cU_2}[\Fix(U)]$, hence $J_{cU_2}(\overline{x})\in zer(U_1+U_2)$. From the recursive definition of $x_{2n+2}$, we see that
\[
\norm{z_n-y_n}=\frac{1}{1-\beta_n}\norm{x_{2n+2}-x_{2n+1}}\leq \frac{1}{\gamma}\norm{x_{2n+2}-x_{2n+1}}\to 0,
\]
which entails that $(z_n)$ also converges strongly to $J_{cU_2}(\overline{x})$. We now argue the rates of metastability for $(y_n)$ and $(z_n)$. For all $i,j\in\N$, we have
\begin{equation*}
	\norm{y_i-y_j}= \norm{J_{cU_2}(x_{2i+1})-J_{cU_2}(x_{2j+1})} \leq \norm{x_{2i+1}-x_{2j+1}}
\end{equation*}
and
\begin{align*}
\norm{z_i-z_j}&= \norm{J_{cU_1}(2y_i-x_{2i+1})-J_{cU_1}(2y_j-x_{2j+1})}\\
& \leq \norm{2y_i-x_{2i+1}-(2y_j-x_{2j+1})}\\
& \leq 2\norm{y_i-y_j}+\norm{x_{2i+1}-x_{2j+1}}\\
& \leq 3\norm{x_{2i+1}-x_{2j+1}}.	
\end{align*}
The result now follows from part $(i)$ and the fact that if $i\in [n,f(n)]$, then $2i+1\in [n,2f(n)+1]$.
\end{proof}

The algorithms considered by Bo\c{t}, Csetnek and  Meier \cite{Botetal(19)} are the particular case of the algorithms considered in this section when $u=0$. 

\section{Final remarks}\label{s:final}

We consider in this paper an algorithm which mixes Halpern and Krasnoselskii-Mann iterative definitions in an alternating way. Under appropriate conditions we show asymptotic regularity for any sequence $(x_n)$ generated by this algorithm, and construct effective rates of asymptotic regularity. We prove that $(x_n)$ is a Cauchy sequence and obtain a rate of metastability in the sense of Tao \cite{T(08b),T(08a)}. Using this result, we show that our iterative method strongly approximates a common fixed point of two nonexpansive mappings. This strong convergence result allowed us to define strongly convergent versions of the forward-backward and the Douglas-Rachford algorithms. Our results generalize recent work by Bo\c{t}, Csetnek and  Meier \cite{Botetal(19)}, and Cheval and Leu\c{s}tean \cite{CK(ta)}.

In fact, the proof mining program enables, in certain instances, to obtain generalized versions of mathematical results. We briefly comment on three recent examples. The first example concerns the analysis, carried out in \cite{KLAN(21)}, of a proof of the ``Lion-Man'' game whose convergence crucially uses the compactness of the metric space together with its betweenness property. Proof mining allowed to weaken the compactness assumption to a boundedness condition, if \emph{betweenness} is upgraded to \emph{uniform betweenness} (which coincides with betweenness in the compact case anyway), resulting in a striking generalization since now the convergence holds in all bounded subsets of uniformly convex Banach spaces, $\mathrm{CAT}(\kappa)$ spaces ($\kappa>0$, where one anyhow always has a boundedness assumption), etc.\ The second example comes from \cite{KP(22)} where the authors analysed a proof that the strong convergence of the so-called viscosity generalizations of Browder and Halpern-type algorithms can be reduced to the convergence of the original Browder/Halpern algorithms in the setting of Banach spaces. The quantitative analysis allowed to generalize this reduction to the non-linear setting of hyperbolic spaces, which in turn allowed for several applications of previously known rates of metastability for these algorithms. Our final example can be found in \cite{S(ta)} where the author adapts the analysis of the strong convergence of the Halpern type Proximal Point Algorithm, given in \cite{K(20)}, from the setting of Banach spaces to $\CAT$ spaces. This gave rise to new qualitative convergence results. The key observation is that instead of \emph{strong nonexpansivity} and corresponding SNE-moduli, one can also work with \emph{strong quasi-nonexpansivity} and corresponding SQNE-moduli (introduced in \cite{K(16)}). More information on strongly quasi-nonexpansive operators can be found in \cite[Section~2.1]{CRZ(18)}.
 
Finally, let us explain how the proof of Theorem~\ref{t:main} was obtained. A proof of strong convergence can be carried out with the usual arguments in the setting of Hilbert spaces, as explained below. However, it is not clear if such arguments can be generalized to a non-linear setting. Nevertheless, a passage through quantitative results made it possible to overcome this problem. 

Let us elaborate on this matter. A proof in Hilbert spaces is guided by the following steps:
\begin{enumerate}
\item[$(1)$]  \underline{$(x_n)$ is bounded}: Follow the arguments of Lemma~\ref{l:bounded}.
\item[$(2)$] \underline{Asymptotic regularity of $(x_n)$}: Follow the arguments of Lemma~\ref{l:artheta} and Proposition~\ref{p:asymptoticregularity}.
\item[$(3)$] \underline{Projection argument}: With $\tilde{x}$ the projection point onto $F$ of $u$, we have $\forall y \in F \left(\langle u- \tilde{x}, y-\tilde{x}\rangle \leq 0\right)$. 
\item[$(4)$] \underline{Sequential weak compactness and demiclosedness}: Pick a subsequence $(x_{n_j})$ of $(x_{2n})$ such that 
\[
\limsup\, \langle u- \tilde{x}, x_{2n}-\tilde{x}\rangle =\lim_{j \to \infty}\, \langle u- \tilde{x}, x_{n_j}-\tilde{x}\rangle,
\] 
and simultaneously $(x_{n_j})$ converges weakly to some $y \in F$. Here we are using (twice) the following demiclosedness principle.
\begin{lemma}[Demiclosedness principle \cite{B(65)}]
Let $C$ be a closed convex subset of $H$ and let $f : C \to C$ be a nonexpansive mapping such that $\mathrm{Fix}(f)\neq \emptyset$. Assume that $(x_n)$ is a sequence in $C$ such that $(x_n)$ weakly converges to $x \in C$ and  $((\Id_C - f)(x_n))$ converges strongly to $y \in H$. Then $(\Id_C - f)(x) = y$.
\end{lemma}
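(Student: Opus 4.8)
The plan is to work directly in the Hilbert space $H$, exploiting the inner product rather than invoking Opial's property (though that route is also available, see below). Put $u_n:=(\Id_C-f)(x_n)$, so by hypothesis $u_n\to y$ strongly and $x_n\rightharpoonup x$ weakly; since $x\in C$ and $f$ maps $C$ into $C$, the vector $v:=(\Id_C-f)(x)$ is well defined, and the goal is exactly $v=y$. First I would note that a weakly convergent sequence is norm-bounded, so $M:=\sup_n\|x_n-x\|<\infty$. Observe also that the assumption $\mathrm{Fix}(f)\neq\emptyset$ is not actually needed for this statement; it is stated only because the lemma is typically applied in the form ``$y=0$'' to recover that weak cluster points of asymptotically regular sequences lie in $\mathrm{Fix}(f)$.

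The core computation starts from nonexpansiveness of $f$: $\|f(x_n)-f(x)\|^2\le\|x_n-x\|^2$. Writing $f(x_n)=x_n-u_n$ and $f(x)=x-v$, the left-hand side equals $\|(x_n-x)-(u_n-v)\|^2=\|x_n-x\|^2-2\langle x_n-x,\,u_n-v\rangle+\|u_n-v\|^2$, so after cancelling $\|x_n-x\|^2$ we get
\[
\|u_n-v\|^2\le 2\langle x_n-x,\,u_n-v\rangle .
\]
Then I would bound the right-hand side by splitting $u_n-v=(u_n-y)+(y-v)$: the term $\langle x_n-x,\,u_n-y\rangle$ has absolute value at most $M\|u_n-y\|\to 0$ by Cauchy--Schwarz, boundedness, and the strong convergence $u_n\to y$; the term $\langle x_n-x,\,y-v\rangle\to 0$ because $y-v$ is a fixed element of $H$ and $x_n-x\rightharpoonup 0$. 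Hence the right-hand side tends to $0$, so $\|u_n-v\|\to 0$, i.e.\ $(\Id_C-f)(x_n)\to v$; combined with $(\Id_C-f)(x_n)\to y$ and uniqueness of strong limits, this forces $v=y$, which is the claim.

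I expect essentially no genuine obstacle here, this being a classical fact due to Browder; the only point needing care is the order of limits in the cross term $\langle x_n-x,\,u_n-v\rangle$ — one cannot pass to the weak limit directly since the second argument also varies with $n$, so one first replaces $u_n$ by its strong limit $y$ (legitimate by boundedness of $x_n-x$) and only afterwards uses weak convergence against the now-fixed vector $y-v$. For completeness I would add the remark that an equally short alternative proof runs through Opial's property of Hilbert spaces: nonexpansiveness gives $\|x_n-(f(x)+y)\|\le\|x_n-x\|+\|y-u_n\|$, whence $\liminf_n\|x_n-(f(x)+y)\|\le\liminf_n\|x_n-x\|$; if $v\neq y$ then $f(x)+y\neq x$, contradicting the Opial inequality $\liminf_n\|x_n-x\|<\liminf_n\|x_n-w\|$ for $w\neq x$, so again $v=y$.
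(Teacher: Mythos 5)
Your proof is correct. However, the paper does not actually prove this lemma: it states it as a recalled classical result, with a citation to Browder, inside the informal discussion in the final remarks section (where it is used to explain the sequential-weak-compactness step that the quantitative machinery of the paper is designed to bypass). So there is no ``paper's proof'' to compare against. Your argument is the standard one: from nonexpansiveness you derive $\|u_n-v\|^2\le 2\langle x_n-x,\,u_n-v\rangle$, split the inner product into a part controlled by the strong convergence $u_n\to y$ (via Cauchy--Schwarz and the norm-boundedness of $(x_n-x)$, which follows from Banach--Steinhaus) and a part that vanishes by weak convergence against the fixed vector $y-v$, conclude $u_n\to v$ strongly, and identify $v=y$ by uniqueness of limits. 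Your observation that $\mathrm{Fix}(f)\neq\emptyset$ is not used is also right; it is included only because the lemma is invoked with $y=0$ to place weak subsequential limits of asymptotically regular sequences in $\mathrm{Fix}(f)$. The Opial-property variant you sketch is likewise sound (with $f(x)+y\neq x$ precisely when $v\neq y$), and both are entirely consistent with how the paper treats this lemma as background.
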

By step (3) it follows that $\limsup \,\langle u-\tilde{x},x_n-\tilde{x}\rangle \leq 0$.
\item[$(5)$]  \underline{Main combinatorial part}: Following the inequalities culminating in  \eqref{ineqProofmain} (in the proof of Theorem~\ref{t:main}), we are then able to apply Lemma~\ref{L:Xu} and conclude that $x_n \to \tilde{x}$.  
\end{enumerate}

As shown in Section~\ref{s:asymptoticregularity}, steps (1) and (2) are valid in $\CAT$ spaces (actually, even in general UCW spaces \cite{LP(ta)}). It is however not clear how the sequential weak compactness argument could be carried out in this more general context.

On the other hand, the quantitative analysis of this proof in Hilbert spaces is known to be possible. The only roadblocks to the analysis are the projection and the sequential weak compactness arguments.  Yet, recent developments in proof mining provide a method to eliminate such arguments. The solution is twofold. For eliminating the full logical strength of the projection argument one relies on the crucial observation by Kohlenbach \cite{kohlenbach2011quantitative} that a weaker $\varepsilon$-version already suffices to obtain quantitative results. This holds true in both Hilbert and $\CAT$ spaces. As for sequential weak compactness one makes use of the general principle developed in \cite{FFLLPP(19)}. This macro was developed in the general setting of metric spaces aiming to bypass sequential weak compactness arguments in Hilbert spaces. The fundamental observation that the macro can be applied in the setting of $\CAT$ spaces (no longer with concerns about weak compactness) rests on establishing some version of \cite[Proposition~4.3]{FFLLPP(19)}. This can be seen to correspond to our treatment of the metric projection via the quasi-linearization function characterization carried out in Section~\ref{s:Projection} and culminating in Proposition~\ref{p:removalswc} (which is the conclusion of \cite[Proposition~4.3]{FFLLPP(19)}, adapted to two maps, with $\varphi(x,y):=\langle \overrightarrow{ux},\overrightarrow{uy}\rangle$). The argument in Section~\ref{s:Strongconvergence} brings us back to a ``qualitative'' statement, giving rise to our proof of Theorem~\ref{t:main}.  

Finally, we would like to mention some very recently obtained results on asymptotic regularity for particular choices of parameters. For the particular case of the \eqref{CL} iteration (and also the modified Halpern iteration), \emph{linear} rates of asymptotic regularity were obtained  in \cite{CKL(ta)}. The asymptotic regularity of the general iteration \eqref{e:MannHalpern} was studied in the context of UCW spaces in the forthcoming  \cite{LP(ta)}. For a particular choice of parameters it was shown to admit quadratic rates in $\CAT$ spaces.

\section*{Acknowledgements}

 The first author acknowledges the support of FCT - Funda\c{c}\~ao para a Ci\^{e}ncia e Tecnologia under the projects: UIDP/04561/2020 and UIDP/04674/2020, and the research centers CMAFcIO -- Centro de Matem\'{a}tica, Aplica\c{c}\~{o}es Fundamentais e Investiga\c{c}\~{a}o Operacional and CIMA -- Centro de Investigação em Matemática e Aplicações. 

\noindent The second author was supported by the German Science Foundation (DFG Project KO 1737/6-2).

\noindent This work benefited from discussions with Ulrich Kohlenbach. 

\bibliography{References}{}
\bibliographystyle{abbrv}

\end{document}